\newtheorem{thm}{Theorem}[section]
\newtheorem{lem}[thm]{Lemma}
\newtheorem{prop}[thm]{Proposition}
\newtheorem{cor}[thm]{Corollary}
\newtheorem{defn}[thm]{Definition}
\newtheorem{rmk}[thm]{Remark}
\newtheorem{ques}[thm]{Question}
\DeclareMathOperator{\id}{id}
\DeclareMathOperator{\Id}{Id}
\DeclareMathOperator{\Log}{Log}
\DeclareMathOperator{\Arg}{Arg}
\DeclareMathOperator{\PR}{PSL_2(\RR)}
\DeclareMathOperator{\PC}{PSL_2(\CC)}
\DeclareMathOperator{\ES}{\bf{E}}
\DeclareMathOperator{\HS}{\bf{H}}
\DeclareMathOperator{\PS}{\bf{P}}
\DeclareMathOperator{\CS}{\bf{C}}
\DeclareMathOperator{\EHS}{\bf{EH}}
\DeclareMathOperator{\WW}{\bf{W}}
\DeclareMathOperator{\CCC}{\mathcal{C}}
\DeclareMathOperator{\PPP}{\mathcal{P}}
\DeclareMathOperator{\EEE}{\mathcal{E}}
\DeclareMathOperator{\HHH}{\mathcal{H}}
\DeclareMathOperator{\FF}{\mathcal{F}}
\DeclareMathOperator{\Aut}{Aut}
\DeclareMathOperator{\CP1}{\mathbb{C}\mathbb{P}^1}      %
\DeclareMathOperator{\CPP}{\mathbb{C}\mathbb{P}^2}      %
\DeclareMathOperator{\SO}{SO_4}      %
\DeclareMathOperator{\SOO}{SO_2}      %
\DeclareMathOperator{\Dp}{\mathbb{D}^{+}}      %
\DeclareMathOperator{\Dm}{\mathbb{D}^{--}}      %
\DeclareMathOperator{\Chat}{\widehat{\mathbb{C}}}
\DeclareMathOperator{\lcm}{lcm}
\DeclareMathOperator{\Fix}{Fix}
\DeclareMathOperator{\Max}{Max}
\DeclareMathOperator{\sign}{sign}
\DeclareMathOperator{\RE}{Re}
\DeclareMathOperator{\IM}{Im}
\newcommand{\NN}{\mathbb{N}}      %
\newcommand{\ZZ}{\mathbb{Z}}      % for Integers
\newcommand{\RR}{\mathbb{R}}      % for Real numbers
\newcommand{\QQ}{\mathbb{Q}}      % for Real numbers
\newcommand{\CC}{\mathbb{C}}      % for Real numbers
\newcommand{\DD}{\mathbb{D}}      % for Integers
\newcommand{\HH}{\mathbb{H}}      % for Integers
\begin{document}

%\nocite{*}

\title{The Space of Geometric Limits of Abelian Subgroups of $\PC$}

\author{Hyungryul Baik \& Lucien Clavier \thanks{We really appreciate that John H. Hubbard let us know about this problem and explained how we could approach at the beginning. He also has provided us a lot of advices through enlightening discussions. We also thank to Bill Thurston for the helpful discusstions. } \\ 
Department of Mathematics\\
310 Malott Hall, Cornell University \\
Ithaca, New York 14853-4201 USA}

\maketitle

\begin{abstract}
We describe the topology of the space of all geometric limits of closed abelian subgroups of $\PC$. 
Main tools and ideas will come from the previous paper \cite{BC1}. 
\end{abstract}

\tableofcontents
\listoffigures

\section{Introduction}
\label{intro}
\subsection{Chabauty topology}

Recall that the Chabauty topology of a locally compact group $G$ is the topology on the space $\overline{F}(G)$ of all its closed subgroups induced by the Hausdorff distance one the one-point compactification $\overline{G}$ of $G$ (see \cite{BC1} for instance, or \cite{Cha1}).
Equipped with this topology, $\overline{F}(G)$ becomes a compact metric space;
$\overline{F}(G)$, together with the Hausdorff distance $d_H$, will be usually referred to as the \textit{Chabauty space} of $G$. We write it $\CCC(G)$.

In the context of Kleinian groups, the limit of a convergent sequence in the Chabauty topology is called the \emph{geometric limit} of the sequence.

 In the previous paper \cite{BC1} of the authors, we obtained the following theorem, where $\CS$ (resp. $\ES$, $\HS$, $\PS$) is the closure of the space of discrete cyclic subgroups of $\PR$ (resp. discrete subgroups generated by an elliptic, hyperbolic, parabolic element of $\PR$). 
 
\begin{thm}
 \label{thm:mainthmofpsl2r} 
 The space of all geometric limits of closed subgroups of $\PR$ with one generator is $\CS = \ES \cup \HS / \sim$, where
\begin{itemize}
 \item[(1)] $\ES$ is a wedge sum of countably many 2-spheres $D_n / \partial D_n $, which accumulate on a disk $D_\infty$ and to the cone $\PS$ on the circle $\partial D_\infty$.
(see Figure 7 in \cite{BC1})
 \item[(2)] $\HS$ is the cone on a closed M\"{o}bius band, the inside of which is foliated by ``bent'' open M\"{o}bius bands, which accumulate to an open M\"{o}bius band $M_0$ and the cone $\PS$ on the circle $\partial M_0$ (see Figure 8 in \cite{BC1}).
 \item[(3)] $\sim$ represents the gluing of $\ES$ and $\HS$ along $\PS$.
\end{itemize}
\end{thm}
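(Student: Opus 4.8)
\noindent\emph{Sketch of proof.}

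The plan is to reduce the statement to an explicit bookkeeping of conjugacy data together with a short catalogue of degenerations. First one enumerates the closed subgroups of $\PR$ with a single topological generator. By the classification of elements of $\PR$ into elliptic, parabolic, hyperbolic and $\id$, these are: the trivial group; the finite cyclic groups $\ZZ/n$ (generated by an elliptic element of finite order $n$); the full rotation subgroup $\SOO$ about a point $p\in\HH$ (the closure of $\langle g\rangle$ for an elliptic $g$ of infinite order, which is the whole circle); the infinite cyclic group generated by a parabolic element; and the infinite cyclic group generated by a hyperbolic element. Hence $\ES$, $\HS$, $\PS$ are the Chabauty closures of, respectively, the set of all $\ZZ/n$'s (over all $n$), the set of all hyperbolic $\ZZ$'s, and the set of all parabolic $\ZZ$'s, and $\CS$ is the closure of the union of all of these.

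Next one coordinatizes each family: a $\ZZ/n$ is determined by its fixed point, giving a copy of the open disk $\HH$; likewise a rotation circle is determined by its center, another open disk, which will be $D_\infty$; a parabolic $\ZZ$ is determined by its fixed point on $\partial\HH\cong S^1$ together with its translation length in $(0,\infty)$; and a hyperbolic $\ZZ$ by its axis — an unoriented geodesic, i.e.\ a point of the open M\"obius band $M=(\partial\HH\times\partial\HH\setminus\Delta)/(\ZZ/2)$ — together with its translation length in $(0,\infty)$. Writing these parametrizations out with explicit $2\times2$ matrices, one checks that each is a continuous injection into $\CCC(\PR)$, hence an embedding of its open parameter domain.

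The core of the argument is to compute every Chabauty limit, using the standard description of convergence in $\CCC(\PR)$ via limits in $\overline{\PR}$ of sequences of group elements. The degenerations that occur are: (i) as $n\to\infty$ with the fixed point held fixed, $\ZZ/n$ converges to the rotation circle about that point, so the spheres accumulate onto $D_\infty$; (ii) in either elliptic family, pushing the fixed point to $\partial\HH$ while keeping the rotation angle fixed sends every nontrivial element to the point at infinity of $\overline{\PR}$, yielding the trivial group — this collapses each $\partial D_n$ to the common wedge point and $\partial D_\infty$ to a point; (iii) but if the center $iy$ of a rotation circle is pushed to a boundary point $\xi$ while the angle is allowed to shrink like $2c/y$, the generator converges to the parabolic $z\mapsto z+c$, so that the whole circle converges to the one-parameter parabolic group fixing $\xi$; this identifies $\partial D_\infty$ with the circle of one-parameter parabolic groups; (iv) letting the translation length of a parabolic, resp.\ hyperbolic, $\ZZ$ tend to $0$ produces the corresponding one-parameter group, letting it tend to $\infty$ produces the trivial group, and collapsing the two endpoints of a hyperbolic axis together, with a matching rescaling of the length, produces parabolic limits — identifying $\partial M$ with the one-parameter parabolic locus. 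Assembling (i)--(iv) one reads off that $\ES$ is the union of the disks $D_n$ with their boundaries collapsed to a single common point, together with $D_\infty$ and the cone $\PS$; that $\HS$ is the cone on the closed M\"obius band $\overline M$ whose interior is foliated by the length-$\ell$ slices, each a bent copy of the open M\"obius band of hyperbolic $\ZZ$'s, accumulating onto $M_0$ (the space of one-parameter hyperbolic groups) as $\ell\to0$ and onto the cone point as $\ell\to\infty$; that $\PS$ is the cone on $S^1$ whose rim consists of the one-parameter parabolic groups and whose interior consists of the parabolic $\ZZ$'s; and that $\ES$ and $\HS$ meet $\CS$ in exactly $\PS$. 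A compactness argument — $\CCC(\PR)$ is compact metric, and any escaping sequence of generators either subconverges within a single parameter domain or degenerates as in (i)--(iv) — shows no further limit points arise, so $\CS=\ES\cup\HS/\!\sim$, with $\sim$ the gluing along $\PS$.

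The step I expect to be the main obstacle is this limit computation together with the topology that goes with it: one must handle several superimposed degenerations at once (the fixed point tending to the boundary \emph{and} the angle or length tending to a limit \emph{and}, for hyperbolics, the two axis endpoints colliding), show that only the limits listed actually occur, and — the more delicate point — verify that the Chabauty topology on the closure genuinely coincides with the quotient topology produced by these collapsings, so that $\ES$ really is a wedge of $2$-spheres accumulating on a disk and $\HS$ really is foliated in the stated way. Once the collapsing maps are recognized as quotient maps onto their images in the metrizable space $\CCC(\PR)$, the set-theoretic picture upgrades to the asserted homeomorphisms and the theorem follows; everything else is routine $2\times2$ matrix bookkeeping.
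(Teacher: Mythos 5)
First, a point of orientation: this paper does not prove Theorem \ref{thm:mainthmofpsl2r} --- it is recalled from \cite{BC1} --- so your proposal can only be measured against the strategy that paper indicates, namely explicit parametrization of each family of subgroups plus the Reduction Lemma (Proposition \ref{prop:redlem}) to control Chabauty limits. Your outline does follow the same global architecture: you enumerate the closed one-generator subgroups correctly, coordinatize them by the same data (fixed point and order for $\ZZ/n$, center for the rotation circles, unoriented axis and translation length for hyperbolics, boundary fixed point and translation length for parabolics), and your catalogue of degenerations (i)--(iv) is, with one exception noted below, the correct list and does assemble into the stated picture of $\ES$, $\HS$ and $\PS$.

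The genuine gap is the one you yourself flag and then do not close. Everything that makes this theorem nontrivial lives in the step you defer: showing that under superimposed degenerations (fixed point tending to $\partial\HH$ \emph{while} the angle or translation length rescales, axis endpoints colliding \emph{while} the translation quantity is adjusted) the only limits are the ones in your list, and that the Chabauty topology on the closure coincides with the quotient topology of your collapsing maps --- this last point is what makes $\ES$ literally a wedge of spheres accumulating on $D_\infty\cup\PS$ and makes the Möbius bands in $\HS$ ``bent'' rather than merely foliating. ``Routine $2\times 2$ matrix bookkeeping'' does not settle this: one must convert convergence of groups (all elements at once) into convergence of a single real parameter set, and that is precisely the job of the Reduction Lemma, which reduces Hausdorff convergence in $\CCC(\PR)$ to Hausdorff convergence of subgroups of $\RR$ after a suitable rescaling. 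Without that (or an equivalent device) your step ``any escaping sequence either subconverges within a single parameter domain or degenerates as in (i)--(iv)'' is an assertion, not a proof. There is also a small internal inconsistency you should repair: in (ii) you claim that pushing the center of a rotation circle to $\partial\HH$ collapses $\partial D_\infty$ to a point, but the full rotation group $\SOO$ about $iy$ contains elements of every angle, so its limit as $y\to\infty$ is the entire one-parameter parabolic group (your own computation in (iii)), never the trivial group; (ii) is correct only for the finite-order disks $D_n$, which is exactly why $D_\infty$ survives as a disk while each $D_n$ becomes a sphere.
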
 

The geometric convergence of Kleinian groups is not so easy to understand in general; we developed the following proposition to reduce the problem of the convergence for the Hausdorff topology in some complicated space (e.g. $\CS\subset \PR$) to the convergence for the Hausdorff topology in a better-known space (e.g. some particular family of closed subsets of $\CC$). 
This will play a fundamental role in the present paper.

\begin{prop}[Reduction Lemma]
\label{prop:redlem}
Let $(X,d_X),\, (Y,d_Y)$ be two second countable, locally compact metric spaces. 
Let $( \varphi_n)$ be a sequence of maps from $X$ to $Y$, converging to a continuous proper map~$\varphi$, uniformly on every compact subset.
Assume that for every compact subset $K \subset Y$, the closed subset 
\[
\overline{\bigcup_{n\geq N}\varphi_n^{-1}(K)}
\]
is compact for $N$ large enough.

Then whenever a sequence of closed subsets $F_n\subset X$ converges to a closed subset $F$ in the Hausdorff topology of $X$, the subsets $\overline{\varphi_n(F_n)}$ converge to $\overline{\varphi(F)}$ in the Hausdorff topology of $Y$.
\end{prop}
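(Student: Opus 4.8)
The plan is to unwind Hausdorff (Chabauty) convergence of closed sets into its two standard ``Kuratowski'' halves and to verify each directly. Recall (as discussed in \cite{BC1}) that in a second countable, locally compact metric space a sequence of closed sets $A_n$ converges to a closed set $A$ in the Hausdorff topology if and only if (i) every $a\in A$ is the limit of a sequence $a_n\in A_n$, and (ii) whenever $x_{n_k}\in A_{n_k}$ is a subsequence converging to a point $a$, one has $a\in A$. It therefore suffices to check (i) and (ii) with $A_n=\overline{\varphi_n(F_n)}$ and $A=G:=\overline{\varphi(F)}$. Since $\varphi$ is continuous and proper it is a closed map (both spaces being locally compact Hausdorff), so $\varphi(F)$ is already closed and $G=\varphi(F)$; I will use this to streamline (i), though it can be bypassed by a diagonal argument.

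I would first prove (ii), where the hypotheses do their work. Let $y_{n_k}\in\overline{\varphi_{n_k}(F_{n_k})}$ with $y_{n_k}\to y\in Y$. For each $k$ choose $x_{n_k}\in F_{n_k}$ with $d_Y(\varphi_{n_k}(x_{n_k}),y_{n_k})<1/k$; then $\varphi_{n_k}(x_{n_k})\to y$ as well. Fix a compact neighbourhood $K$ of $y$; then $\varphi_{n_k}(x_{n_k})\in K$ for $k$ large, so $x_{n_k}\in\varphi_{n_k}^{-1}(K)$, and by hypothesis $x_{n_k}$ eventually lies in the compact set $C:=\overline{\bigcup_{n\ge N}\varphi_n^{-1}(K)}$ for suitable $N$. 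Hence some subsequence satisfies $x_{n_{k_j}}\to x$ with $x\in C\subseteq X$, and property (ii) for the convergence $F_n\to F$, applied along $(n_{k_j})$, gives $x\in F$. Since $C$ is compact, $\varphi_n\to\varphi$ uniformly on $C$, so using the continuity of $\varphi$ one has $d_Y(\varphi_{n_{k_j}}(x_{n_{k_j}}),\varphi(x))\le d_Y(\varphi_{n_{k_j}}(x_{n_{k_j}}),\varphi(x_{n_{k_j}}))+d_Y(\varphi(x_{n_{k_j}}),\varphi(x))\to 0$. Therefore $y=\lim_j\varphi_{n_{k_j}}(x_{n_{k_j}})=\varphi(x)\in\varphi(F)=G$, which is (ii) for the $A_n$.

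For (i), let $y=\varphi(x)\in G$ with $x\in F$. Property (i) for $F_n\to F$ provides $x_n\in F_n$ with $x_n\to x$. The set $\{x\}\cup\{x_n:n\in\NN\}$ is compact, so $\varphi_n\to\varphi$ uniformly on it, and as above $d_Y(\varphi_n(x_n),\varphi(x))\le d_Y(\varphi_n(x_n),\varphi(x_n))+d_Y(\varphi(x_n),\varphi(x))\to 0$. Thus $y_n:=\varphi_n(x_n)\in\varphi_n(F_n)\subseteq\overline{\varphi_n(F_n)}=A_n$ and $y_n\to y$, which is (i). If one did not wish to invoke that $\varphi$ is closed, one would instead write $y=\lim_m\varphi(x^{(m)})$ with $x^{(m)}\in F$, run the preceding argument for each $m$, and extract a diagonal sequence.

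The main obstacle is step (ii): a priori nothing prevents the chosen preimages $x_{n_k}\in F_{n_k}$ from drifting off to infinity in $X$, and if they did there would be no $x\in F$ with $y=\varphi(x)$ --- the conclusion would genuinely fail. The assumption that $\overline{\bigcup_{n\ge N}\varphi_n^{-1}(K)}$ be compact is exactly the uniform-in-$n$ form of properness that excludes this and lets one extract the convergent subsequence $x_{n_{k_j}}$. The remaining points are bookkeeping: one must always apply the uniform convergence $\varphi_n\to\varphi$ on a \emph{fixed} compact set containing a tail of the relevant sequence, and use that a subsequence of a Chabauty-convergent sequence of closed sets converges to the same limit (immediate from (i)--(ii)).
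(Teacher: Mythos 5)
Your proof is correct and complete. The paper itself does not reprove this lemma here (it defers to Section~5 of \cite{BC1}), and your argument is the natural one that such a proof must take: split Hausdorff convergence of closed sets into its two Kuratowski halves, use the uniform-in-$n$ properness hypothesis on $\overline{\bigcup_{n\ge N}\varphi_n^{-1}(K)}$ to extract a convergent subsequence of preimages for the ``limsup'' half, and use uniform convergence on compacta plus closedness of the proper map $\varphi$ for the ``liminf'' half; the only cosmetic caveat is that condition (i) for $F_n\to F$ should be read as providing $x_n\in F_n$ only for $n$ large enough, in case some $F_n$ are empty.
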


\subsection{Transformations of $\PC$}
Note, after identification of $\PC$ and $\Aut(\HH^3)$, that each element of $\PC$ acts on $\Chat=\partial\HH^3$.
Let us recall that the isometries of $\HH^3$ are of three types:
\begin{itemize}
\item parabolic if they have one fixed point in $\Chat=\partial\HH^3$.
\item elliptic if they have two fixed point in $\Chat$, and act on $\HH^3$ as a rotation along the axis defined by these fixed points.
\item hyperbolic if they have two fixed points in $\Chat$, and act on $\HH^3$ as a translation with skew along the axis defined by these fixed points.
\end{itemize}
More precisely, elliptic (resp. hyperbolic) elements of $\PC$ are conjugated to a map $[z\mapsto a z]$, by sending one of the fixed point at 0, and the other at $\infty$; we have $a\in S^1$ (resp. $a\in \CC^\ast\setminus S^1$), and we call $a$ the rotation (resp. translation) quantity of the element. 

%It will sometimes be useful to consider the neutral element of $\PR$ to be of either of the three types above.

\subsection{Closed abelian subgroups of $\PC$}

In this paper, we are mainly interested in subgroups of $\PC$ that are cyclic or abelian.
%; more precisely, we are interested in the boundaries of the spaces of such subgroups, for the Chabauty topology of $\PC$.

For every isometry $g\in \Aut(\HH^3)$, let $\Fix(g)$ be the set of fixed points of $g$ on the sphere at infinity $\Chat$. 
Recall that abelian subgroups of $\PC$ are exactly the subgroups $G\subset \PC$ such that every elements have the same fixed points at infinity, i.e.
\[
\forall g_1,g_2\in G, \, \Fix(g_1)=\Fix(g_2)
\]

Now, let us define $\CS_1$ (resp. $\CS_2$) to be the boundary of the space of cyclic (resp. abelian) subgroups of $\PC$.
Of course, $\CS_1\subset \CS_2$, but $\CS_1$ is an interesting object in itself.

Note that a non-trivial abelian subgroup $G$ of $\PC$ can be of two rather different kinds, namely:
\begin{itemize}
\item $G$ is a parabolic subgroup, i.e. each  of its non-trivial element is parabolic and fixes the same point $z\in \Chat$.
Then $G$ is conjugated to a subgroup $\Gamma$ of translations of $\Chat$, under a map sending $z$ to $\infty$;
if this map is chosen once and for all, $G$ is entirely described by $z$ and $\Gamma$ (see Section \ref{parabolics})
\item $G$ is a non-parabolic subgroup, i.e. each of its non-trivial element is non-parabolic and fixes the same points $z_1, z_2\in \Chat$.
Then the space of all rotation/translation quantities of its elements is a subgroup $\Xi$ of $\CC^\ast$, and $G$ is entirely described by the unordered pair $\{z_1,z_2\}$ and by $\Xi$. 
\end{itemize}

Moreover, $\CS_1$ is constituted of three main ``chunks'', namely $\PS_1$ (resp. $\ES$, $\HS$) the boundary of the space of cyclic subgroups generated by one parabolic (resp. elliptic, hyperbolic) element.
Other special ``chunks'' of $\CS_2$ are $\PS_2$ the boundary of the space of abelian parabolic subgroups, and for each $m\ge 1$ the spaces $\EHS_m$, respectively the boundaries of the spaces of non-parabolic subgroups of $\PC$ which have exactly $m$ elliptic elements (counting the identity of $\PC$ as an elliptic element here).
 In particular, $\EHS_1=\HS$.
%Three other special ``chunks'' of $\CS_2$ are $\PS_2$ the boundary of the space of abelian parabolic subgroups, and two other spaces $\EHS_B$ and $\EHS_D$, which are respectively the boundaries of the space of non-parabolic subgroups of $\PC$ whose elements are represented by groups $\Gamma \in \CC^\ast$ which are concentric circles, and logarithmic spirals. 
%See Section ??.

Exactly how these ``chunks'' fit together is the purpose of the paper.

See Section \ref{sec:future} for a summary statement.

\subsection{Gaining a generator}

To finish this introductory part, let us expose the following example of a sequence of cyclic hyperbolic subgroups converging to a non-cyclic parabolic subgroup.
The existence of such a behaviour is well-known (see for instance \cite{Brock}); the following explicit example is due to John H Hubbard, and can be found in the (unpublished) book \cite{Hubb2}.

Consider a sequence of elements $\alpha_n$ of $\PC$ of the form 
\[
\alpha_n : z \mapsto \rho_n^2 e^{\frac{2\pi i }{n}} (z - a_n) + a_n.
\]
 Each $\alpha_n$ is a hyperbolic isometry which fixes both $a_n$ and $\infty$.
 We will choose both $\rho_n$ and $a_n$ so that $\alpha_n$ converges to $[z \mapsto z+1]$ 
and $\alpha_n^n$ converges to the parabolic element $[z \mapsto z +
\gamma]$ with $\gamma \notin \RR$.

For that purpose, set 
\[
a_n = \dfrac{1}{1 - \rho_n^2 e^{\frac{2\pi i }{n}}}
 \]
so that $\alpha_n(0)\equiv 1$.
In particular, this forces $\rho_n\to 1$.

When $\rho_n \equiv 1$, $\alpha_n^n = \id$ for all $n$, so it obviously converges to the identity.
When $\alpha_n^n = 1 + \frac{\alpha}{n}$ for some constant $\alpha$, it is not convergent.
We leave to the reader to check that if $\rho_n \in o(\frac{1}{n^2})$, then $\alpha_n^n$ 
converges to a parabolic element, and that if $\rho_n$ is defined by
\[
 \rho_n = 1 +\dfrac{\alpha}{n^2}
\]
then $\alpha_n^n$ converges to the map $[z \mapsto z -i\frac{\alpha}{\pi}] $.

There is a nice geometric view of this enrichment, due to Jorgersen, using an invariant cone becoming wider and wider (see \cite{Brock}).
In Section \ref{subsec:cylinders}, we will give another way of seeing this phenomenon, which is similar but more intimately connected to the present work.

\pagebreak

\section{Matrix representations}
\subsection{$\CP1$ as a quotient}
\label{cp1}
For technical reasons that will appear clearly in the next two sections, we would like to find a particular subspace of $\CC^2\setminus \{0\}$ mapped homeomorphically to $\CC$ via $(\zeta,\xi)\mapsto \zeta/\xi$, and which stays away from both 0 and $\infty$ (i.e. has a compact closure that does not contains 0).
The first classical choice of such a homeomorphism is the plane $\CC \times \{1\}$, but it is not compact; the second is the sphere in $\CC \times \RR \subset \CC^2$ of radius $1/2$ and centered at (0,1/2), with the south pole removed:
\[
S^2\setminus S= \{(\zeta,\xi)\in\CC^2\setminus \{0\};\; \xi\in (0,1],\, |\zeta|^2 + (\xi-1/2)^2 = 1/4 \}
\]
but its closure $S^2$ contains the south pole $S=(0,0)$.

Let us then define $\Dp$ to be the unit upper hemisphere in $\CC \times \RR \subset \CC^2$:
\[
 \Dp=\{(\zeta,\xi)\in\CC^2\setminus \{0\};\; \xi\in (0,1],\, |\zeta|^2 + \xi^2 = 1 \}
\]

\begin{figure}[ht]
\begin{center}
\includegraphics[scale=0.3]{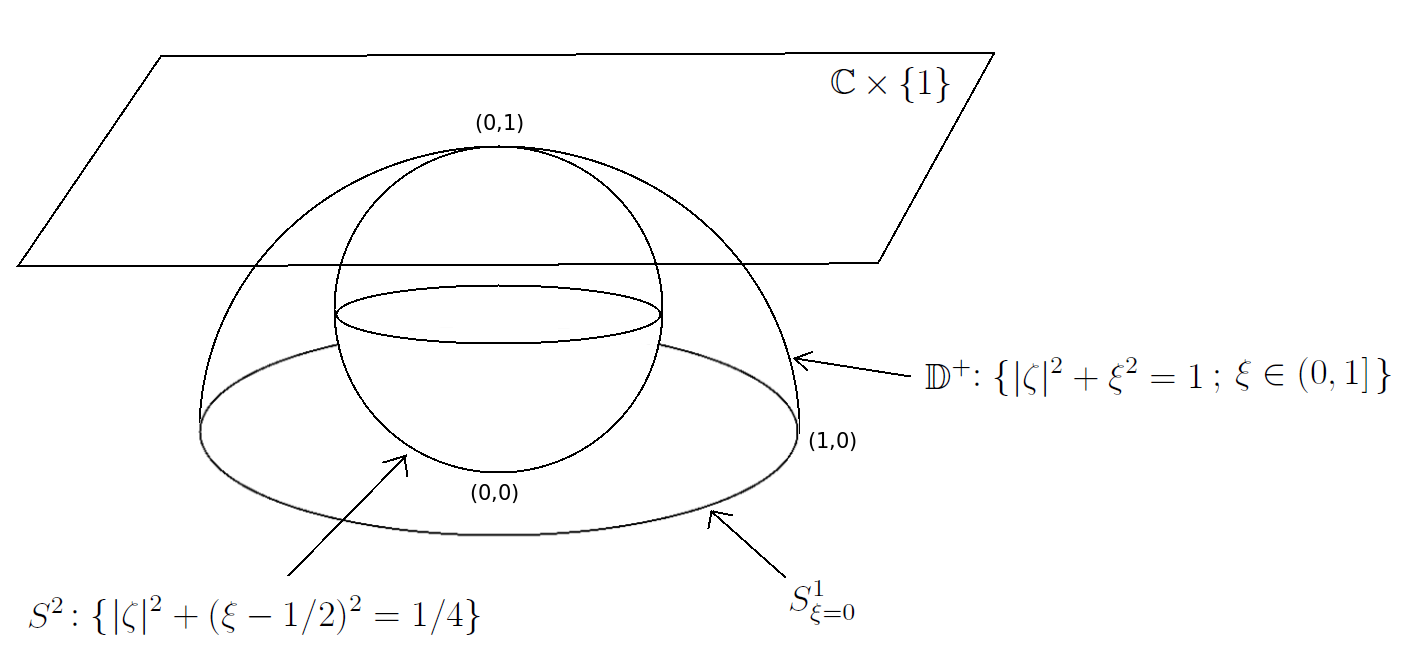}
\end{center}
\caption[Three models for $\CP1$ with a point removed: the plane $\CC\times \{1\}$]{Three models for $\CP1$ with a point removed: the plane $\CC\times \{1\}$, the sphere $S^2$ minus its south pole $(0,0)$, and the upper unit hemisphere $\Dp$.}
\label{fig:cp1}
\end{figure}  

Note that $(\zeta,\xi)\mapsto [\zeta:\xi]$ induces an homeomorphism
\[ \overline{\Dp}/ S^1_{\xi=0} \cong \CP1\]

\begin{rmk}
\label{rmk:stereo}
It is straightforward to see that the stereographic projection from $\overline{\Dp}$ to $S^2$ is given by 
\[
(\zeta,\xi)\mapsto (\zeta\xi,\xi^2)
\]
\end{rmk}

Let us define
\[
\Dm=(\overline{\Dp}/ S^1_{\xi=0})\setminus \{(0,1)\}
\]
and 
\[
S^1_{\text{eq}}=\{(\zeta,\xi)\in \Dp;\; \xi=\frac{\sqrt{2}}{2}\}
\]

Under the stereographic projection, $\Dp$, $\Dm$ and $S^1_{\text{eq}}$ are respectively mapped to $S^2\setminus S$, $S^2\setminus N$ (where $S$ and $N$ are the south and north poles $(0,0)$ and $(0,1)$) and to the equator of $S^2$ for which $\xi$ is constantly 1/2.

\subsection{Matrix representations of elliptic and hyperbolic isometries}
\label{matrixEH}
In this section, we show how to represent every elliptic and hyperbolic element of $\PC$ as a $2 \times 2$ matrix.

Recall that an elliptic (resp. hyperbolic) element of $\PC$ fixes an unique axis joining two distinct fixed points of $\Chat$, and that it then acts like a rotation (resp. a complex translation) around this axis.
Moreover, an elliptic (resp. hyperbolic) element is entirely determined by its two fixed points and its rotational angle (resp. complex translation number). 

Let $H$ be either elliptic or hyperbolic, with rotation/translation quantity $a\in \CC^\ast$; suppose first that the two distinct fixed points $z_1$ and $z_2$ of $H$ are in $\CC$. 
Then it is straightforward to check that the matrix of $\PC$ representing $H$ is
\[
 \frac{1}{\sqrt{a}(z_2-z_1)} \begin{pmatrix} az_2-z_1 & z_1z_2(a-1) \\ 1-a & z_2-az_1 \end{pmatrix}
\]
Indeed, $\phi = [z \mapsto \frac{z - z_1}{z - z_2}]$ is an automorphism of $\Chat$ mapping $z_1$ to $0$ and $z_2$ to $\infty$;
moreover, the element with rotation/translation quantity $a$ and fixed points $0$ and $ \infty$ is simply $[z \mapsto a z]$.

This description using $z_i\in \CC$ has the drawback of blowing off when one of these fixed points approaches $\infty\in \Chat$.
To circumvent this, let us replace $z_1$ and $z_2$ by projectivized quantities $\zeta_1/\xi_1$ and $\zeta_2/\xi_2$, with $(\zeta_i,\xi_i)\in \Dp$. 
Then the matrix becomes
\[
 \frac{1}{\sqrt{a}} \begin{pmatrix} 1+\mu\,\zeta_2\xi_1 & \mu\,\zeta_1\zeta_2 \\-\mu\,\xi_1\xi_2  & 1-\mu\,\zeta_1\xi_2 \end{pmatrix}
\]
with $\mu=\dfrac{a-1}{\zeta_2\xi_1-\zeta_1\xi_2}$.

\begin{defn}
\label{defn:matrepEH}
For every pair of distinct points $(\zeta_1,\xi_1),(\zeta_2,\xi_2)\in \Dp$, and for every $a\in \CC$ verifying $|a|=1$ (resp. $|a|\notin \{0,1\}$), the elliptic (resp. hyperbolic) element of $\PC$ fixing both $[\zeta_i:\xi_i]\in \CP1$ and with rotation (resp. translation) quantity $a$ is written 
\[
 H_{(\zeta_1,\xi_1),(\zeta_2,\xi_2),a}= \frac{1}{\sqrt{a}} \begin{pmatrix} 1+\mu\,\zeta_2\xi_1 & \mu\,\zeta_1\zeta_2 \\-\mu\,\xi_1\xi_2  & 1-\mu\,\zeta_1\xi_2 \end{pmatrix}
\]
with $\mu=\frac{a-1}{\zeta_2\xi_1-\zeta_1\xi_2}$.
\end{defn}

Write $\Theta$ for the space of pairs of distinct points in $S^2$:
\[
\Theta=\Big((\CP1\times\CP1)\setminus \Delta\Big)\Bigg/ \Big((x,y)\sim(y,x) \Big) 
\]
where $\Delta$ is the diagonal $\{(x,x);\; x\in \CP1\}$; we definitely think of $\CP1$ here as being $\overline{\Dp}/ S^1_{q=0} $.

\begin{lem}
The space $\EEE$ (resp. $\HHH$) of non-trivial elliptic (resp. hyperbolic) elements of $\PC$ is homeomorphic to $\Theta\times (S^1\setminus 1)$ (resp. $\Theta\times (\CC^\ast \setminus S^1)$).
\end{lem}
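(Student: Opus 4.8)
The plan is to exhibit an explicit homeomorphism between $\EEE$ and $\Theta\times(S^1\setminus 1)$ using the matrix representation from Definition~\ref{defn:matrepEH}, and likewise for $\HHH$ with $\CC^\ast\setminus S^1$. First I would define the candidate map $\Psi$ in the direction $\Theta\times(S^1\setminus 1)\to\EEE$ by sending $\big(\{[\zeta_1:\xi_1],[\zeta_2:\xi_2]\},a\big)$ to $H_{(\zeta_1,\xi_1),(\zeta_2,\xi_2),a}$. The very first thing to check is that this is \emph{well-defined}: one must verify that the matrix $H_{(\zeta_1,\xi_1),(\zeta_2,\xi_2),a}$ does not depend on the choice of representatives $(\zeta_i,\xi_i)\in\overline{\Dp}$ of the points of $\CP1$ (recall $\CP1=\overline{\Dp}/S^1_{\xi=0}$, so the ambiguity is exactly a phase on the equatorial circle $\xi=0$), and that it is symmetric under the exchange $(\zeta_1,\xi_1)\leftrightarrow(\zeta_2,\xi_2)$. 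The symmetry is a short computation: swapping the two points replaces $\mu$ by $-\mu$ and $a$ by… one checks the resulting matrix is the same element of $\PC$ (up to the overall $\pm$ sign, which is invisible in $\PC$). For the phase-independence, if only the fixed points are at the equator the formula degenerates mildly, and one falls back on the description preceding Definition~\ref{defn:matrepEH} together with a direct check that the projective class is unchanged.

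Next I would establish that $\Psi$ is a \emph{bijection}. Surjectivity is the content of the discussion in Section~\ref{matrixEH}: every non-trivial elliptic (resp. hyperbolic) element of $\PC$ has a unique unordered pair of distinct fixed points in $\Chat\cong\CP1$ and a well-defined rotation (resp. translation) quantity $a\in S^1\setminus 1$ (resp. $a\in\CC^\ast\setminus S^1$), and the given matrix realizes it — this is exactly the conjugation-to-$[z\mapsto az]$ argument already spelled out. Injectivity is the same statement read backwards: the pair $\{[\zeta_1:\xi_1],[\zeta_2:\xi_2]\}$ is recovered as the fixed-point set of the isometry, and $a$ is recovered from the trace (or from the action on the axis), so two triples giving the same element of $\PC$ must agree as points of $\Theta\times(S^1\setminus 1)$.

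It remains to check that $\Psi$ and $\Psi^{-1}$ are \emph{continuous}. Continuity of $\Psi$ is immediate because the entries of $H_{(\zeta_1,\xi_1),(\zeta_2,\xi_2),a}$ are rational — in fact polynomial after clearing the harmless $1/\sqrt a$ and $1/(\zeta_2\xi_1-\zeta_1\xi_2)$ — functions of $(\zeta_1,\xi_1,\zeta_2,\xi_2,a)$ that never blow up, precisely because we chose representatives in the \emph{compact} set $\overline{\Dp}$ staying away from $0$, so the denominator $\zeta_2\xi_1-\zeta_1\xi_2$ stays bounded away from $0$ on the complement of the diagonal (this is the whole point of working with $\Dp$ rather than $\CC\times\{1\}$, as flagged in Section~\ref{cp1}); one then passes to the quotient $\Theta$ since $\Psi$ is constant on the phase-orbits. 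Continuity of the inverse: the fixed points depend continuously on the element (they are the roots of a quadratic in $\CP1$ whose coefficients are continuous in the matrix entries, and distinctness is preserved on the non-trivial locus), and $a$ is a continuous function of the matrix. Since $\EEE$ and $\HHH$ are not obviously compact, I would either invoke continuity of both directions directly, as above, or note that the map restricts to a homeomorphism on compact exhaustions.

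I expect the main obstacle to be the bookkeeping around the equatorial circle $S^1_{\xi=0}$, i.e.\ making the well-definedness and the continuity genuinely seamless at fixed points lying ``at infinity'': one must be careful that the formula for $\mu$ and for the matrix entries behaves correctly as $\xi_i\to 0$, and that the quotient by $(x,y)\sim(y,x)$ on $\Theta$ meshes with the sign ambiguity in $\sqrt a$ so that the map to $\PC$ is honestly single-valued. Everything else is routine rational-function continuity and the already-established normal form for elliptic/hyperbolic isometries.
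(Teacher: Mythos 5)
Your overall route is the same as the paper's: take the explicit matrix $H_{(\zeta_1,\xi_1),(\zeta_2,\xi_2),a}$ of Definition~\ref{defn:matrepEH}, check that it is well defined on the quotient defining $\Theta$, and check continuity in both directions. The paper's own proof concentrates on the one identification you call ``bookkeeping around the equatorial circle'': when $\xi_1=0$ the matrix is independent of the phase $\zeta_1$, so the map factors through $\overline{\Dp}/S^1_{\xi=0}\cong\CP1$. You identify the right checklist, and your continuity discussion is fine up to a slip (the denominator $\zeta_2\xi_1-\zeta_1\xi_2$ is \emph{not} bounded away from $0$ on all of the complement of the diagonal --- it tends to $0$ as the two points collide --- only on compact subsets of that complement, which is what continuity actually needs).

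The genuine problem is your verification of the exchange symmetry. You assert that swapping $(\zeta_1,\xi_1)\leftrightarrow(\zeta_2,\xi_2)$ replaces $\mu$ by $-\mu$ and that ``the resulting matrix is the same element of $\PC$ up to sign.'' It is not. A direct computation (using $\mu(\zeta_2\xi_1-\zeta_1\xi_2)=a-1$ to simplify the diagonal entries) gives
\[
H_{(\zeta_2,\xi_2),(\zeta_1,\xi_1),a}=H_{(\zeta_1,\xi_1),(\zeta_2,\xi_2),\,1/a},
\]
which is exactly what the geometry predicts: the rotation/translation quantity is defined by sending one fixed point to $0$ and the other to $\infty$, and reversing that choice conjugates $[z\mapsto az]$ by $[z\mapsto 1/z]$, turning $a$ into $1/a$. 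So the formula does \emph{not} descend to a function on the unordered quotient $\Theta\times(S^1\setminus 1)$; what one gets canonically is the quotient of $\widetilde\Theta\times(S^1\setminus 1)$ (ordered pairs) by the simultaneous involution $\bigl((p_1,p_2),a\bigr)\mapsto\bigl((p_2,p_1),1/a\bigr)$, i.e.\ a fiber bundle over $\Theta$ associated to the order double cover, not obviously a product. Your proof as written would ``verify'' a false identity and then proceed; to repair it you must either fix an ordering convention (and then you are no longer mapping out of $\Theta\times(S^1\setminus1)$) or argue separately that the resulting twisted bundle is trivial. This is the one point where the argument genuinely needs an idea rather than a computation, and it is the step at which your proposal fails.
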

\begin{proof}
 With the choice of $\Dp$ for representing $\CP1\setminus \{[1:0]\}$,
$H_{(\zeta_1,\xi_1),(\zeta_2,\xi_2),a}$ can be extended continuously when either one of the $(\zeta_i,\xi_i)$ approaches the boundary of $\Dp$. 
We write this continuation in the obvious way, e.g. if $\xi_1=0$ (thus $|\zeta_1|=1$):
\[
 H_{(\zeta_1,0),(\zeta_2,\xi_2),a}= \frac{1}{\sqrt{a}} \begin{pmatrix} 1 & \mu\,\zeta_1\zeta_2 \\ 0  & 1-\mu\,\zeta_1\xi_2 \end{pmatrix}
\]

Note that, because of the definition of $\mu=\frac{a-1}{-\zeta_1\xi_2}$, this matrix does not depend on $\zeta_1$.
Therefore, the desired homeomorphism for the elliptic case is
\begin{align*}
\Theta \times (S^1\setminus 1)& \to \PC\\
(\zeta_1,\xi_1),(\zeta_2,\xi_2),e^{i\phi} &\mapsto  H_{(\zeta_1,\xi_1),(\zeta_2,\xi_2),e^{i\phi}}
\end{align*}

Similarly, the desired homeomorphism for the hyperbolic case is
\begin{align*}
\Theta \times ( \CC^\ast\setminus S^1)& \to \PC\\
(\zeta_1,\xi_1),(\zeta_2,\xi_2),a &\mapsto  H_{(\zeta_1,\xi_1),(\zeta_2,\xi_2),a}
\end{align*}
\end{proof}

This description have the following two propositions as a consequence.

\begin{prop}
\label{prop:nonpar}
The space of all non-trivial non-parabolic closed abelian subgroups of $\PC$ is homeomorphic to 
\[
\Theta \times (\CCC(\CC^\ast)\setminus \{1\})
\] 
\end{prop}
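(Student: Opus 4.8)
The plan is to write down an explicit bijection $\Phi$ between the two spaces, coming from the parametrization of Definition \ref{defn:matrepEH}, to prove that $\Phi$ is continuous with the help of the Reduction Lemma (Proposition \ref{prop:redlem}), and then to deduce that $\Phi^{-1}$ is continuous by a short compactness argument.

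For $\theta\in\Theta$, let $\Psi_\theta\colon\CC^\ast\to\PC$ denote the map $a\mapsto H_{\theta,a}$ of Definition \ref{defn:matrepEH}, extended to the boundary of $\Dp$ as in the proof of the Lemma above. Conjugating an ordered representative $(z_1,z_2)$ of $\theta$ to $(0,\infty)$ shows $H_{\theta,a}\circ H_{\theta,b}=H_{\theta,ab}$, so $\Psi_\theta$ is an injective homomorphism onto the ``torus'' $T_\theta:=\Psi_\theta(\CC^\ast)$; as $T_\theta$ is conjugate to the diagonal subgroup it is closed in $\PC$, and as $[z\mapsto az]$ escapes every compact of $\PC$ when $|a|\to0$ or $|a|\to\infty$, the map $\Psi_\theta$ is proper, hence a homeomorphism onto $T_\theta$ which moreover induces a homeomorphism $\CCC(\CC^\ast)\cong\CCC(T_\theta)$. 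Using the description of abelian subgroups recalled in Section \ref{intro} — a non-trivial non-parabolic abelian subgroup has all of its elements fixing one and the same pair of points of $\Chat$, hence is contained in some $T_\theta$ — I would then check that
\[
\Phi\colon(\theta,\Xi)\longmapsto\Psi_\theta(\Xi)
\]
is a well-defined bijection from $\Theta\times(\CCC(\CC^\ast)\setminus\{1\})$ onto the set of non-trivial non-parabolic closed abelian subgroups of $\PC$: the image is a closed subgroup (a closed subgroup of $T_\theta$ via the homeomorphism $\Psi_\theta$), non-trivial precisely when $\Xi\neq\{1\}$; surjectivity is the structure statement just quoted; and $\Phi$ is injective because $\theta$ is recovered from $\Phi(\theta,\Xi)$ as the common fixed-point pair of its non-trivial elements, whence $\Xi=\Psi_\theta^{-1}(\Phi(\theta,\Xi))$.

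To prove $\Phi$ continuous, I would take $(\theta_n,\Xi_n)\to(\theta,\Xi)$ and apply Proposition \ref{prop:redlem} with $X=\CC^\ast$, $Y=\PC$, $\varphi_n=\Psi_{\theta_n}$, $\varphi=\Psi_\theta$ and $F_n=\Xi_n$. The spaces $\CC^\ast$ and $\PC$ are second countable, locally compact and metric; $\Psi_\theta$ is continuous and proper by the previous paragraph; and since $\Psi\colon\Theta\times\CC^\ast\to\PC$ is continuous (explicitly so, from Definition \ref{defn:matrepEH}) and $\{\theta\}\cup\{\theta_n\}$ is compact, the maps $\Psi_{\theta_n}$ converge to $\Psi_\theta$ uniformly on every compact of $\CC^\ast$. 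The remaining hypothesis to check is that, for each compact $K\subset\PC$ and each $N$ large enough that all $\theta_n$ with $n\geq N$ lie in a fixed compact neighbourhood $L$ of $\theta$ in $\Theta$, the set
\[
\bigcup_{n\geq N}\Psi_{\theta_n}^{-1}(K)\ \subseteq\ \{a\in\CC^\ast:\ H_{\theta',a}\in K\text{ for some }\theta'\in L\}
\]
is relatively compact in $\CC^\ast$; this should follow from the fact that an elliptic or hyperbolic element whose fixed points remain in the compact family $L$ escapes every compact of $\PC$ as its rotation/translation quantity tends to $0$ or to $\infty$, uniformly over $L$. This last point is the one I expect to require genuine care; the rest is bookkeeping with the explicit matrix formula. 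Granting it, the Reduction Lemma gives $\overline{\Psi_{\theta_n}(\Xi_n)}\to\overline{\Psi_\theta(\Xi)}$ in the Hausdorff topology of $\PC$; these closures are the already-closed groups $\Phi(\theta_n,\Xi_n)$ and $\Phi(\theta,\Xi)$, and, recalling that Chabauty convergence is precisely Hausdorff convergence in the one-point compactification, the hypothesis $\Xi_n\to\Xi$ in $\CCC(\CC^\ast)$ is exactly the convergence $F_n\to F$ the lemma requires. Hence $\Phi$ is continuous.

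Finally I would prove $\Phi^{-1}$ continuous. Suppose $G_n\to G$ with all of them non-trivial non-parabolic closed abelian, and write $G_n=\Phi(\theta_n,\Xi_n)$, $G=\Phi(\theta,\Xi)$. First one checks $\theta_n\to\theta$: choosing a non-trivial $g\in G$ — necessarily elliptic or hyperbolic, with $\Fix(g)=\theta$ and with a lift of trace-square $\neq4$ — and $g_n\in G_n$ with $g_n\to g$, one finds that for $n$ large $g_n$ is non-trivial with $\Fix(g_n)=\theta_n$, and, since the unordered pair of eigenlines of a matrix depends continuously on it away from trace-square $4$, that $\theta_n=\Fix(g_n)\to\Fix(g)=\theta$. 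Since $\CCC(\CC^\ast)$ is compact, it then suffices to show that any convergent subsequence $\Xi_{n_k}\to\Xi''$ satisfies $\Xi''=\Xi$: continuity of $\Phi$ along this subsequence gives $G_{n_k}\to\Phi(\theta,\Xi'')$, while also $G_{n_k}\to G=\Phi(\theta,\Xi)$, so uniqueness of Hausdorff limits together with injectivity of $\Phi$ forces $\Xi''=\Xi$. Therefore $\Xi_n\to\Xi$, so $(\theta_n,\Xi_n)\to(\theta,\Xi)$, and $\Phi$ is a homeomorphism.
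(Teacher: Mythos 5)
Your proof is correct and uses exactly the map the paper uses (the one induced by $(\theta,\Xi)\mapsto\{H_{\theta,a};\,a\in\Xi\}$ from Definition \ref{defn:matrepEH}); the paper simply declares this map to be ``immediately'' a homeomorphism onto its image, whereas you supply the verification (bijectivity, continuity via the Reduction Lemma, and continuity of the inverse by compactness of $\CCC(\CC^\ast)$). The one step you flag as delicate — uniform properness over a compact family of fixed-point pairs — in fact follows even more easily than you suggest, since the trace of $H_{\theta,a}$ is $\sqrt{a}+1/\sqrt{a}$ independently of $\theta$, so $\Psi_{\theta}^{-1}(K)$ lies in a fixed compact annulus of $\CC^\ast$ for every $\theta\in\Theta$.
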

\begin{proof}
It is immediate to see that the desired homeomorphism is induced by
\begin{align*}
\Theta \times (\CCC(\CC^\ast)\setminus \{1\})\to& \CCC(\PC) \\
(\zeta_1,\xi_1),(\zeta_2,\xi_2),\Xi \mapsto& \left\{ H_{(\zeta_1,\xi_1),(\zeta_2,\xi_2),a};\; a\in \Xi \right\} \\
\end{align*}
where by convention $H_{(\zeta_1,\xi_1),(\zeta_2,\xi_2),1}$ is always the identity of $\PC$.
More precisely, the map above is a homeomorphism from $\Theta \times (\CCC(\CC^\ast)\setminus \{1\})$ onto its image, which is the space of all non-trivial non-parabolic closed abelian subgroups of $\PC$, and we are done.
\end{proof}

\begin{prop}
\label{prop:nonpar2}
The space of all non-trivial discrete subgroups of $\PC$ generated by one elliptic generator is homeomorphic to 
\[
\Theta \times \NN_{\ge 2}
\] 

The space of non-trivial discrete subgroups of $\PC$ generated by one hyperbolic generator is homeomorphic to 
\[
\Theta \times (\CC\setminus \overline{\DD})
\] 

\end{prop}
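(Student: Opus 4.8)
The plan is to derive both homeomorphisms from Proposition~\ref{prop:nonpar}. Under the homeomorphism there, a non-trivial non-parabolic closed abelian subgroup $G$ is encoded by the pair $(\{z_1,z_2\},\Xi)\in\Theta\times(\CCC(\CC^\ast)\setminus\{1\})$, where $\{z_1,z_2\}$ is its fixed-point pair and $\Xi$ is the closed subgroup of rotation/translation quantities of its elements; since $a\mapsto H_{\{z_1,z_2\},a}$ is an isomorphism $\Xi\to G$, the group $G$ is generated by a single element exactly when $\Xi$ is cyclic. Now the non-trivial closed cyclic subgroups of $\CC^\ast$ are precisely the finite cyclic groups $\mu_q=\langle e^{2\pi i/q}\rangle\subset S^1$ for $q\ge2$, and the infinite cyclic groups $\langle a\rangle$ with $|a|\ne1$ (which are automatically discrete and closed); in the correspondence the former are exactly the groups generated by one elliptic element (rotation quantity $e^{2\pi i/q}$), the latter exactly the groups generated by one hyperbolic element (translation quantity $a$). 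Since $\Theta\times E$ inside $\Theta\times(\CCC(\CC^\ast)\setminus\{1\})$ carries the product topology (and likewise for $\mathcal{L}$ below), it suffices to establish: \textbf{(E)} the subset $E=\{\mu_q;\ q\ge2\}\subset\CCC(\CC^\ast)$ is countable and discrete, hence homeomorphic to $\NN_{\ge2}$; and \textbf{(H)} the map $a\mapsto\langle a\rangle$, using the representative $a$ with $|a|>1$, is a homeomorphism of $\CC\setminus\overline{\DD}$ onto the subset $\mathcal{L}=\{\langle a\rangle;\ |a|\ne1\}\subset\CCC(\CC^\ast)$.

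For \textbf{(E)}, the key point is that $\mu_{q_n}\to S^1$ in $\CCC(\CC^\ast)$ whenever $q_n\to\infty$, because the $q_n$-th roots of unity become $\frac{2\pi}{q_n}$-dense in $S^1$. Hence if $\mu_{q_n}\to\mu_q$, the sequence $(q_n)$ cannot be unbounded, so it takes finitely many values, and any value occurring infinitely often must equal $q$ (its constant subsequence converges to $\mu_q$); thus $q_n=q$ eventually. So every point of $E$ is isolated and $E\cong\NN_{\ge2}$.

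For \textbf{(H)}, the map is a bijection onto $\mathcal{L}$ since an infinite cyclic subgroup of $\CC^\ast$ has exactly the two generators $a,a^{-1}$, of which one has modulus $>1$. It is continuous by the standard $\liminf/\limsup$ check: if $a_n\to a$ with $|a_n|,|a|>1$, then each power $a_n^k\to a^k$, and $a_n^k\to\infty$ in $\overline{\CC^\ast}$ once $|k|$ and $n$ are large, so $\overline{\langle a_n\rangle}\to\overline{\langle a\rangle}$. For continuity of the inverse, assume $\langle a_n\rangle\to\langle a\rangle$ and choose $g_n=a_n^{m_n}\in\langle a_n\rangle$ with $g_n\to a$ (possible since $a\in\langle a\rangle\subseteq\liminf\langle a_n\rangle$). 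If $|a_n|\to\infty$ along a subsequence, $|a_n|^{m_n}$ cannot tend to $|a|\in(1,\infty)$ unless $m_n=0$, forcing $g_n=1\not\to a$; so $(|a_n|)$ is bounded. If $|a_n|\to1$ along a subsequence, pick $j_n$ with $|a_n|^{j_n}\to|a|^{1/2}$ (possible since $\log|a_n|\to0$); then any subsequential limit of $a_n^{j_n}$ lies in $\limsup\langle a_n\rangle\subseteq\langle a\rangle$ but has modulus $|a|^{1/2}\notin\{|a|^m;\ m\in\ZZ\}$, contradicting the discreteness of $\langle a\rangle$. Hence $(a_n)$ ranges in a compact subset of $\CC\setminus\overline{\DD}$; for any subsequential limit $a_\infty$, comparing moduli shows $m_n$ is eventually constant along that subsequence (say $=m$), so $a_\infty^{\,m}=a$, and also $a_\infty\in\limsup\langle a_n\rangle\subseteq\langle a\rangle$; together these force $\langle a_\infty\rangle=\langle a\rangle$, i.e.\ $a_\infty\in\{a,a^{-1}\}$, and since $|a_\infty|>1$ we get $a_\infty=a$. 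As every subsequence of $(a_n)$ has a further subsequence converging to $a$, we conclude $a_n\to a$, proving \textbf{(H)}.

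The step I expect to be the main obstacle is the exclusion of the degeneration $|a_n|\to1$ in \textbf{(H)}: there the generators slide onto the unit circle while the cyclic groups $\langle a_n\rangle$ might still converge, and one must manufacture a limit point of intermediate modulus to contradict discreteness of $\langle a\rangle$. Everything else is bookkeeping with Hausdorff $\liminf/\limsup$ and the transfer through Proposition~\ref{prop:nonpar}.
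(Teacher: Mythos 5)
Your proposal is correct and follows essentially the same route as the paper: both exhibit the explicit parametrization $\bigl((\zeta_1,\xi_1),(\zeta_2,\xi_2),a\bigr)\mapsto \langle H_{(\zeta_1,\xi_1),(\zeta_2,\xi_2),a}\rangle$ (equivalently, restrict the homeomorphism of Proposition \ref{prop:nonpar} to the cyclic closed subgroups of $\CC^\ast$). The only difference is that the paper declares the homeomorphism ``immediate,'' whereas you supply the verification --- in particular the discreteness of $\{\mu_q\}_{q\ge 2}$ and the continuity of the inverse in the hyperbolic case by excluding the degenerations $|a_n|\to 1$ and $|a_n|\to\infty$ --- which is a sound and welcome filling-in of the omitted details.
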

\begin{proof}
It is immediate to see that the desired homeomorphism for the elliptic case is induced by
\begin{align*}
\Theta \times \NN_{\ge 2}& \to \CCC(\PC)\\
(\zeta_1,\xi_1),(\zeta_2,\xi_2),n &\mapsto \langle H_{(\zeta_1,\xi_1),(\zeta_2,\xi_2),e^{2\pi i/n}}\rangle
\end{align*}
where as usual the chevrons indicate ``subgroup generated by''.

Similarly, the desired homeomorphism for the hyperbolic case is induced by
\begin{align*}
\Theta \times ( \CC\setminus \overline{\DD})& \to \CCC(\PC)\\
(\zeta_1,\xi_1),(\zeta_2,\xi_2),a &\mapsto \langle H_{(\zeta_1,\xi_1),(\zeta_2,\xi_2),a}\rangle
\end{align*}
\end{proof}

Before going any further with parabolic subgroups, let us give a more recognizable form to $\Theta$.
\subsection{$\Theta$ as a subspace of $\CPP$}
\label{subsec:Theta}
This subsection is due to John Hamal Hubbard, and we thank him to explain it to us.

For each pair $([\zeta_1:\xi_1],[\zeta_2:\xi_2])\in (\CP1)^2$, consider the polynomial 
\[
P_{[\zeta_1:\xi_1],[\zeta_2:\xi_2]}(x)=(\xi_1x-\zeta_1)(\xi_2x-\zeta_2) 
\]
defined up to a multiplicative constant.

Note that $P_{[\zeta_1:\xi_1],[\zeta_2:\xi_2]}$ and $P_{[\zeta_3:\xi_3],[\zeta_4:\xi_4]}$ differ by a multiplicative constant if and only if $\{[\zeta_1:\xi_1],[\zeta_2:\xi_2]\}=\{[\zeta_3:\xi_3],[\zeta_4:\xi_4]\}$ as sets. 
Also, $[\zeta_1:\xi_1]=[\zeta_2:\xi_2]$ if and only if the square-rooted discriminant of $P_{[\zeta_1:\xi_1],[\zeta_2:\xi_2]}$
\[
\sqrt{\Delta_P}=\pm (\zeta_2\xi_1-\zeta_1\xi_2)
\]
is zero (remark that this number is, up to sign, the denominator of the quantity $\mu$ defined in the matrix representations above, Definition \ref{defn:matrepEH}).

Therefore, the map 
\begin{align*}
\CP1\times\CP1& \to \CPP\\
[\zeta_1:\xi_1],[\zeta_2:\xi_2] &\mapsto [\xi_1\xi_2:-\zeta_2\xi_1-\zeta_1\xi_2:\zeta_1\zeta_2]
\end{align*}
descends to a homeomorphism between $\Theta$ and $\CPP$ minus the curve (actually a sphere) of homogeneous equation
\[
Y^2-4XZ=0
\]

\subsection{Matrix representations of parabolic isometries}
\label{parabolics}
In this section, we show how to represent every parabolic element of $\PC$ as a $2 \times 2$ matrix. This will lead to a complete description of $\PS_1$ and $\PS_2$.

It is well-known that every two parabolic elements of $\PC$ that fix the same point of $S^2$ are conjugate.
We saw in \cite{BC1} that we could find a way of normalizing parabolic elements, by asking them to be conjugated to a particular parabolic element by a chosen matrix. 
This led to a homeomorphism between the space of non-trivial parabolic elements of $\PR$ and $S^1\times  \RR^\ast$, the upshot being a description of the space of parabolic cyclic subgroups of $\PR$.

We cannot immediately extend this method to the case of $\PC$; we will actually see in a while that the space of non-trivial parabolic elements of $\PC$ is a non-trivial bundle (see Proposition \ref{prop:ppp}). 

Write $\PPP$ for the space of all non-trivial parabolic elements of $\PC$.
Since for each point $z\in S^2$ the space of parabolic elements of $\PC$ fixing $z$ is homeomorphic to $\CC^\ast$,we see that $\PPP$ is a $\SOO$-bundle, with base space $S^2$ and fiber $\CC^\ast$.

Now we have two local trivializations
\begin{align*}
\Dp\times \CC^\ast & \to \PPP \subset \PC\\
(\zeta,\xi),\rho &\mapsto \begin{pmatrix} 1 - \rho \, \zeta\xi &  \rho \, \zeta^2  \\ - \rho \, \xi^2  & 1 +  \rho \, \zeta\xi \end{pmatrix} 
\end{align*}
and
\begin{align*}
\Dm\times \CC^\ast & \to \PPP \subset \PC \\
(\zeta,\xi),\rho &\mapsto \begin{pmatrix} 1 - \rho \, \overline{\zeta}\xi &  \rho  \, |\zeta|^2 \\ - \rho \, \dfrac{\overline{\zeta}}{\zeta} \xi^2  & 1 +  \rho \, \overline{\zeta}\xi \end{pmatrix} 
\end{align*}

The clutching map associated with these two trivializations is 
\begin{align*}
S^1_{\text{eq}}& \to \SOO\\
e^{i\phi} &\mapsto [\rho \mapsto e^{2i\phi} \rho]
\end{align*}
which represents the number 2 in $H_1(\SOO)\cong \ZZ$. Thus we have proven the following:
\begin{prop}
\label{prop:ppp}
The space $\PPP$ of non-trivial parabolic elements of $\PC$ is a 2-twist fiber bundle of $\CC^\ast$ above $S^2$. %In particular, it is not homeomorphic to $\CC^\ast \times S^2$.
\end{prop}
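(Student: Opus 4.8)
The plan is to verify that the two maps $\Dp\times\CC^\ast\to\PPP$ and $\Dm\times\CC^\ast\to\PPP$ are genuine local trivializations covering $S^2$, and then to compute their clutching map explicitly. First I would check that each of the two displayed matrices is parabolic (trace $\equiv 2$, and not the identity when $\rho\neq 0$) and that it fixes the point $[\zeta:\xi]\in\CP1$: plugging $[\zeta:\xi]$ into the Möbius action of the first matrix should return $[\zeta:\xi]$, and likewise $[\zeta:\bar\zeta\xi/\zeta]=[\zeta:\xi]$ — wait, more carefully, the second chart is parametrized so that it fixes $[\zeta:\xi]$ as well, with the coordinate twisted by $\bar\zeta/\zeta$. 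So I would confirm that over the overlap $\Dp\cap\Dm$ (which, once pushed to $\CP1$, is $\CP1$ minus the two poles, i.e.\ an annulus retracting onto $S^1_{\text{eq}}$) the two charts describe the same parabolic fixing the same point, and that the fiber coordinate $\rho$ in the first chart corresponds to the fiber coordinate in the second chart via a fixed element of $\SOO=S^1$ depending only on the base point. Since both matrices are normalized (trace $2$, determinant $1$) and fix the same point with the same ``direction,'' the two $\rho$'s must be related by multiplication by a unimodular scalar.

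Next I would pin down that scalar on $S^1_{\text{eq}}$, where $(\zeta,\xi)$ has $\xi=\sqrt2/2$ and $|\zeta|=\sqrt2/2$, i.e.\ $\zeta=e^{i\phi}\sqrt2/2$. Comparing, say, the upper-right entries of the two matrices: the first gives $\rho_+\,\zeta^2=\rho_+ e^{2i\phi}/2$ and the second gives $\rho_-\,|\zeta|^2=\rho_-/2$, so matching forces $\rho_- = e^{2i\phi}\rho_+$, i.e.\ the transition function sends $e^{i\phi}\in S^1_{\text{eq}}$ to the rotation $[\rho\mapsto e^{2i\phi}\rho]$ of the fiber $\CC^\ast$. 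I would double-check consistency with another entry (the lower-left entry: $-\rho_+\xi^2$ versus $-\rho_-(\bar\zeta/\zeta)\xi^2$; since $\bar\zeta/\zeta=e^{-2i\phi}$, this also yields $\rho_-=e^{2i\phi}\rho_+$, so the two charts patch coherently). This clutching map $S^1_{\text{eq}}\to\SOO$, $e^{i\phi}\mapsto e^{2i\phi}$, visibly has degree $2$, hence represents $2\in H_1(\SOO)\cong\pi_1(S^1)\cong\ZZ$.

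Finally, I would assemble the conclusion: $S^2$ is covered by the two contractible-up-to-the-equator charts $\Dp$ and $\Dm$ (the images of $\Dp$ and $\Dm$ in $\CP1\cong S^2$ are the complements of the north and south poles respectively, each a disk), their intersection deformation-retracts to $S^1_{\text{eq}}$, and a fiber bundle over $S^2$ built from two such charts is classified up to isomorphism by the homotopy class of its clutching map $S^1\to\SOO$; here that class is $2$. Therefore $\PPP$ is the $2$-twisted $\CC^\ast$-bundle over $S^2$. The main obstacle I expect is purely computational bookkeeping: checking that the second chart — which is the ``twisted'' one and is defined on $\Dm$ rather than $\Dp$ — really does parametrize \emph{all} parabolics fixing a given point as $\rho$ ranges over $\CC^\ast$, and that the change of chart is exactly multiplication by $\bar\zeta/\zeta$ on the fiber and nothing more; once the algebra of the two matrix families and their overlap is set up correctly, the identification of the clutching class is immediate.
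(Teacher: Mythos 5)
Your proposal is correct and follows essentially the same route as the paper: the same two trivializations over $\Dp$ and $\Dm$, with the clutching map $e^{i\phi}\mapsto[\rho\mapsto e^{2i\phi}\rho]$ read off on $S^1_{\text{eq}}$ and identified with $2\in H_1(\SOO)\cong\ZZ$. Your explicit entry-by-entry verification that $\rho_-=(\zeta/\bar\zeta)\rho_+$ is exactly the computation the paper leaves implicit.
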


\subsection{The spaces $\PS_1$ and $\PS_2$}
\label{subsec:parabolic}
Define $\PS_1'$ (resp. $\PS_2'$) to be the space of all non-trivial discrete cyclic (resp. abelian) parabolic subgroups of $\PC$; of course $\PS_i'\subset\PS_i$.
As above, we see that  $\PS_i'$ is a fiber-bundle over $S^2$ of the space of all non-trivial discrete cyclic subgroups (resp. non-trivial discrete subgroups) of $\CC$.
The former fiber is easily seen to be simply $\CC^\ast \Big/ (z\sim-z) \cong \CC^\ast$, hence $\SOO$ is the structure group of the bundle $P_1'$; 
the latter is known from \cite{PourHubb} to be homeomorphic to $(\CC^2)^\ast$, hence $\SO$ is the structure group of $P_2'$,
where $\SO$ acts on $ (\CC^2)^\ast$ in the usual way (namely, as a $4 \times 4$ real-matrix group acts on $ (\CC^2)^\ast = \RR^4\setminus 0$).

\begin{prop}
\label{prop:p1}
$\PS_1'$ is a 4-twist fiber bundle of $\CC^\ast$ above $S^2$
\end{prop}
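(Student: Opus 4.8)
The plan is to mimic the computation of the clutching map for $\PPP$ carried out just above Proposition \ref{prop:ppp}, now applied to the sub-bundle $\PS_1'$ of cyclic parabolic subgroups rather than to individual elements. The point is that passing from a parabolic element with ``translation parameter'' $\rho$ to the cyclic group it generates replaces the fiber $\CC^\ast$ by $\CC^\ast/(\rho\sim-\rho)$ — a quotient homeomorphic to $\CC^\ast$ again, but via the squaring map $\rho\mapsto\rho^2$. Squaring doubles winding numbers, so a clutching map that represented $2\in H_1(\SOO)\cong\ZZ$ for $\PPP$ will represent $4$ for $\PS_1'$.

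First I would record the two local trivializations of $\PS_1'$ obtained from those of $\PPP$ by quotienting each fiber $\{\rho\}\subset\CC^\ast$ by $\rho\sim-\rho$: over the chart $\Dp$,
\[
\Dp\times\bigl(\CC^\ast/(\rho\sim-\rho)\bigr)\to\PS_1',\qquad (\zeta,\xi),[\rho]\mapsto\Bigl\langle\begin{pmatrix}1-\rho\,\zeta\xi & \rho\,\zeta^2\\ -\rho\,\xi^2 & 1+\rho\,\zeta\xi\end{pmatrix}\Bigr\rangle,
\]
and likewise over $\Dm$ using the second trivialization of $\PPP$. Identifying $\CC^\ast/(\rho\sim-\rho)$ with $\CC^\ast$ through $w=\rho^2$, these become honest $\CC^\ast$-bundle charts, with structure group $\SOO$ acting on the $w$-coordinate. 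Next I would compute the clutching map over $S^1_{\text{eq}}$: the clutching map for $\PPP$ sends $e^{i\phi}$ to $[\rho\mapsto e^{2i\phi}\rho]$, so on the squared coordinate $w=\rho^2$ it acts by $w\mapsto e^{4i\phi}w$, i.e. the clutching map for $\PS_1'$ is
\[
S^1_{\text{eq}}\to\SOO,\qquad e^{i\phi}\mapsto[w\mapsto e^{4i\phi}w],
\]
which represents $4\in H_1(\SOO)\cong\ZZ$. This is exactly the statement that $\PS_1'$ is a 4-twist fiber bundle of $\CC^\ast$ over $S^2$, so I would then remark that $\PS_1=\PS_1'$ (or differs from it only by the trivial group / limit behavior that does not affect the bundle structure over $S^2$) and conclude.

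I expect the only genuinely delicate point to be verifying that the quotient $\CC^\ast/(\rho\sim-\rho)$ is identified \emph{equivariantly} with $\CC^\ast$ — that is, that the $\SOO$-action inherited from the $\PPP$-trivializations really does descend through squaring to the standard action, with no stray sign or orientation issue, and that the two parabolic matrices $\pm\rho$ (i.e. $\begin{pmatrix}1\mp\rho\zeta\xi & \pm\rho\zeta^2\\ \mp\rho\xi^2 & 1\pm\rho\zeta\xi\end{pmatrix}$, which as elements of $\PC$ are distinct but generate the same cyclic group exactly when one is the inverse/negative of the other) are correctly glued. Once the fiberwise identification via $\rho\mapsto\rho^2$ is pinned down, the doubling of the twist number is forced and the rest is the same bundle-theoretic bookkeeping as in Proposition \ref{prop:ppp}. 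A secondary, purely routine check is that the cocycle condition and the compatibility of the two charts on the overlap are inherited from $\PPP$, which they are since the quotient map $\CC^\ast\to\CC^\ast/(\rho\sim-\rho)$ is $\SOO$-equivariant.
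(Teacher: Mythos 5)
Your proof is correct and follows essentially the same route as the paper: the paper likewise quotients the fibers of $\PPP$ by $\rho\sim-\rho$, writes the two trivializations over $\Dp$ and $\Dm$ with fiber $\CC^\ast/(z\sim-z)$, and observes that after identifying this quotient with $\CC^\ast$ the clutching map $e^{i\phi}\mapsto[\langle u\rangle\mapsto e^{2i\phi}\langle u\rangle]$ becomes $e^{i\phi}\mapsto e^{4i\phi}$, i.e. a 4-twist. Your closing remark that $\PS_1=\PS_1'$ is not needed (and not quite right — $\PS_1$ is the closure, handled separately in Corollary \ref{cor:p1}), but it does not affect the argument for the proposition as stated.
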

\begin{proof}

As above we have two trivializations
\begin{align*}
\Dp\times \CC^\ast \Big/( z\sim-z) & \to \,\PS_1' \\
(\zeta,\xi),u &\mapsto 
\left\{\begin{pmatrix} 1 - \rho \, \zeta\xi &  \rho \, \zeta^2  \\ - \rho \, \xi^2  & 1 +  \rho \, \zeta\xi \end{pmatrix}  ;\; \rho \in \langle u \rangle \right\}
\end{align*}
and
\begin{align*}
\Dm\times \CC^\ast \Big/ (z\sim-z )& \to \, \PS_1' \\
(\zeta,\xi),u &\mapsto 
\left\{\begin{pmatrix} 1 - \rho \, \overline{\zeta}\xi &  \rho \, |\zeta|^2  \\ - \rho \, \dfrac{\overline{\zeta}}{\zeta} \xi^2  & 1 +  \rho \, \overline{\zeta}\xi \end{pmatrix} ;\; \rho \in \langle u \rangle\right\}
\end{align*}
where $\langle u \rangle$ is here the additive subgroup of $\CC$ generated by $\pm u\in \CC^\ast$.

The clutching map associated with these two trivializations is now
\begin{align*}
e^{i\phi} &\mapsto [\langle u \rangle \mapsto e^{2i\phi} \langle u \rangle]
\end{align*}
which becomes, after identifying $\CC^\ast \Big/ (z\sim-z)$ and $\CC^\ast$:
\begin{align*}
S^1_{\text{eq}}& \to \SOO\cong S^1\\
e^{i\phi} &\mapsto e^{4i\phi}
\end{align*}
and we are done.
\end{proof}

\begin{cor}
\label{cor:p1}
$\PS_1$ is the one-point compactification of a 4-twist $\SOO$-bundle of $\overline{\DD}^\ast$ over $S^2$.
\end{cor}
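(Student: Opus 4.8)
The plan is to realise $\PS_1$ as the closure $\overline{\PS_1'}$ of the space $\PS_1'$ of non-trivial discrete cyclic parabolic subgroups in $\CCC(\PC)$, and to pin down the frontier $\PS_1\setminus\PS_1'$ by transporting the computation, via the Reduction Lemma (Proposition~\ref{prop:redlem}), to the elementary behaviour of the cyclic subgroups $\ZZ\rho\subset\CC$. By Proposition~\ref{prop:p1}, $\PS_1'$ is the total space of a $4$-twist $\SOO$-bundle over $S^2$ with fibre $\CC^\ast$: the fibre over $z\in S^2$ consists of the non-trivial cyclic parabolic subgroups fixing $z$, parametrised by the translation quantity $\rho$ (taken up to sign). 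Because $S^2$ is compact, after passing to a subsequence any sequence $G_n=\langle H_n\rangle$ in $\PS_1'$ has convergent fixed points $z_n\to z$, while in a local trivialisation the fibre parameter $\rho_n$ either stays in a compact subset of $\CC^\ast$---in which case $G_n$ already converges inside $\PS_1'$---or tends to $0$, or tends to $\infty$. So only these last two degenerations contribute to the frontier.

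Consider first $\rho_n\to\infty$. Directly, every non-identity element of $G_n$ leaves every compact of $\PC$, so $G_n$ converges in $\CCC(\PC)$ to $\{\id,\infty\}$, where $\infty$ denotes the point at infinity of the one-point compactification $\overline{\PC}$; this limit, call it $\star$, is independent of $z$ and of the manner in which $\rho_n\to\infty$. (The same follows from the Reduction Lemma: in a trivialisation $G_n$ is the image of $\ZZ\rho_n$ under the embedding $c\mapsto H_{z_n,c}$ of $\CC$ into $\PC$ given by the matrices of Section~\ref{parabolics}, which is proper---as $|c|\to\infty$ the matrix runs into the locus of singular matrices---and the hypotheses of Proposition~\ref{prop:redlem} are readily checked, the relevant matrices staying non-degenerate as $z_n\to z$; since $\ZZ\rho_n\to\{0,\infty\}$ in $\Chat$, we get $G_n\to\star$.) Thus this degeneration contributes exactly the single new point $\star$.

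Consider now $\rho_n\to 0$. The subgroups $\ZZ\rho_n\subset\CC$ converge in $\Chat$ precisely when $\Arg\rho_n$ converges, to a direction $\theta$ well defined modulo $\pi$, the limit then being the line $\RR e^{i\theta}$ (with its point at infinity); otherwise $G_n$ has two distinct subsequential limits. Feeding this into the Reduction Lemma exactly as above, $G_n$ then converges to $L_{z,\theta}$, the one-parameter parabolic subgroup fixing $z$ whose translation subgroup is $\RR e^{i\theta}$. Distinct pairs $(z,\theta)$ give distinct $L_{z,\theta}$, and the set of them is the total space of an $S^1$-bundle over $S^2$, namely the boundary of the bundle $E$ obtained from $\PS_1'$ by adjoining, $\SOO$-equivariantly, the circle of directions at the origin to each fibre $\CC^\ast$. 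This turns the fibre $\CC^\ast$ into $\overline{\DD}^\ast$---the end $\rho\to 0$ becoming the boundary circle, the end $\rho\to\infty$ becoming the puncture---and changes neither the structure group nor the homotopy class of the clutching function in $\pi_1(\SOO)$; hence $E$ is a $4$-twist $\SOO$-bundle of $\overline{\DD}^\ast$ over $S^2$, and $E=\PS_1'\sqcup\{L_{z,\theta}\}$ as a set.

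Finally, putting these together, $\PS_1=\overline{\PS_1'}=E\sqcup\{\star\}$ as a set. The bijection $\Phi\colon E^{+}\to\PS_1$ that is the above identification on $E$ and sends the point at infinity of $E^{+}$ to $\star$ is continuous: on $\PS_1'$ it is the bundle homeomorphism of Proposition~\ref{prop:p1}; continuity along the boundary circles of $E$ is the $\rho_n\to 0$ analysis; and continuity at the point at infinity of $E^{+}$ is the $\rho_n\to\infty$ analysis, because---using compactness of $S^2$ once more---a net that eventually leaves every compact of $E$ is exactly one whose fibre parameter tends to $\infty$. Since $E^{+}$ is compact and $\PS_1$ is Hausdorff, $\Phi$ is a homeomorphism, which is the assertion. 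The hard part will be the $\rho_n\to 0$ step and the attendant continuity of $\Phi$ along the boundary circles of $E$: one must check that Hausdorff convergence genuinely forces $\Arg\rho_n$ to converge, identify the limit as $L_{z,\theta}$ uniformly while $z_n$ also varies, and confirm compatibility with the topology of the fibrewise compactification of $\CC^\ast$ into $\overline{\DD}^\ast$---which is precisely the task the Reduction Lemma handles, by reducing everything to the behaviour of $\ZZ\rho_n$ inside $\CC$. A minor point to monitor is that for $z_n\to[1:0]$ one should switch to the second trivialisation $\Dm\times\CC^\ast$, although any convergent sequence of fixed points stays within a single chart.
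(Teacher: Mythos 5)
Your proof is correct and follows essentially the same route as the paper: identify $\PS_1$ with the closure of the bundle $\PS_1'$ from Proposition \ref{prop:p1} and compactify fibrewise, the $\rho\to 0$ end yielding the boundary circle of one-parameter parabolic subgroups and the $\rho\to\infty$ end yielding the trivial subgroup, which is common to all fibres and hence produces the one-point compactification. The only difference is one of detail: the paper simply cites \cite{PourHubb} for the fact that the closure in $\CCC(\CC)$ of the space of discrete cyclic subgroups of $\CC$ is a closed disc and declares the corollary to follow from Proposition \ref{prop:p1}, whereas you re-derive that fibrewise fact via the Reduction Lemma and spell out the globalisation over $S^2$ and the compact-to-Hausdorff upgrade to a homeomorphism.
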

\begin{proof}
The closure of the space of discrete subgroups of $\CC$ for the Chabauty topology of $\CC$ is just a closed disc (see for instance \cite{PourHubb}). Thus this corollary follows from \ref{prop:p1}.  
\end{proof}

\begin{prop}
\label{prop:p2}
$\PS_2'$ is homeomorphic to $S^2\times (\CC^2)^\ast$.
\end{prop}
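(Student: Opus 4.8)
The plan is to show that the $\SO$-bundle $\PS_2'$ over $S^2$, whose fiber over $z\in S^2$ is the space of all non-trivial discrete subgroups of the group of parabolics fixing $z$ (identified with $(\CC^2)^\ast=\RR^4\setminus 0$ via \cite{PourHubb}), is in fact trivial, i.e. admits a global section, and hence is homeomorphic to the product $S^2\times(\CC^2)^\ast$. Since the structure group has already been identified as $\SO$ acting on $\RR^4\setminus 0$ in the standard way, triviality will follow once we exhibit explicit compatible trivializations over the two hemisphere-type charts $\Dp$ and $\Dm$ whose clutching map over $S^1_{\text{eq}}$ is null-homotopic.

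First I would write down the two local trivializations exactly as in the proof of Proposition \ref{prop:p1}, but now with a full rank-$2$ lattice (equivalently, an arbitrary point of $(\CC^2)^\ast$ presenting a pair of generators $\langle u, v\rangle$) in place of the cyclic group $\langle u\rangle$: over $\Dp$,
\[
(\zeta,\xi),(u,v) \ \mapsto\ \left\{\begin{pmatrix} 1 - \rho\,\zeta\xi &  \rho\,\zeta^2  \\ - \rho\,\xi^2  & 1 +  \rho\,\zeta\xi \end{pmatrix};\ \rho\in\langle u,v\rangle\right\},
\]
and the analogous formula over $\Dm$ with $\zeta$ replaced by $\overline\zeta$ and the clutching over $S^1_{\text{eq}}$ given, as before, by $\rho\mapsto e^{2i\phi}\rho$ acting on $\CC\cong\RR^2$. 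Next I would compute the induced action on the fiber $(\CC^2)^\ast$: multiplication by $e^{2i\phi}$ on $\CC$ acts on a pair of generators $(u,v)$ coordinatewise, so as a map $S^1_{\text{eq}}\to\SO$ this is $\phi\mapsto$ the block-diagonal rotation $\mathrm{diag}(R_{2\phi},R_{2\phi})$, where $R_\theta\in\SOO$ is rotation by $\theta$. The key point is then a homotopy-theoretic one: the image of this loop in $\pi_1(\SO)\cong\ZZ/2$. Because $\SO$ has fundamental group $\ZZ/2$, a loop represents the nontrivial class iff it does not lift to a loop in $\mathrm{Spin}(4)$; the loop $\phi\mapsto\mathrm{diag}(R_{2\phi},R_{2\phi})$, $\phi\in[0,2\pi]$, is (twice the generator of) a circle subgroup, hence represents $2\in\ZZ$ mapping to $0\in\ZZ/2$. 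So the clutching map is null-homotopic in $\SO$, the bundle is trivial, and $\PS_2'\cong S^2\times(\CC^2)^\ast$.

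I expect the main obstacle to be making the identification of the fiber with $(\CC^2)^\ast$ genuinely $\SO$-equivariant and canonical — i.e. pinning down precisely how the homeomorphism ``space of non-trivial discrete subgroups of $\CC$'' $\cong(\CC^2)^\ast$ from \cite{PourHubb} transforms under the residual $\SOO$ (hence $\SO$) action coming from the clutching, since a priori that homeomorphism is not unique and one must choose it compatibly on both charts. Once that bookkeeping is fixed, the computation that the relevant loop in $\SO$ is $2$ times a generator of a circle subgroup — hence trivial in $\pi_1(\SO)=\ZZ/2$ — is the conceptual heart, and the rest is the standard fact that a principal/associated bundle over $S^2$ with null-homotopic clutching map is trivial, together with checking that the resulting global trivialization is a homeomorphism (which follows, as in Proposition \ref{prop:p1}, from continuity and properness of the explicit formulas).
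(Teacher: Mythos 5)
Your overall strategy is exactly the paper's: write the two trivializations over $\Dp$ and $\Dm$ with fiber the space of non-trivial discrete subgroups of $\CC$, read off the clutching loop over $S^1_{\text{eq}}$, and kill it in $\pi_1(\SO)\cong\ZZ/2$ by observing it is a double of a loop. However, the one step you flag as ``the main obstacle'' is a genuine gap, and the explicit loop you compute is not correct. The identification of the fiber with $(\CC^2)^\ast$ is \emph{not} given by recording a pair of generators $(u,v)$: generator pairs form a $\mathrm{GL}_2(\ZZ)$-cover of the space of lattices (not a homeomorphism onto it), and the fiber also contains cyclic subgroups and must be parametrized together with them. The homeomorphism from \cite{PourHubb} that the statement refers to is the inverse of
\[
\Gamma \mapsto \Bigl( \tfrac{1}{60}\textstyle\sum_{z\in \Gamma\setminus 0} z^{-4},\ \tfrac{1}{140}\textstyle\sum_{z\in \Gamma\setminus 0} z^{-6}\Bigr),
\]
i.e. the Eisenstein-type coordinates $(G_2,G_3)$. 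Under this identification the clutching action $\Gamma\mapsto e^{2i\phi}\Gamma$ becomes $(a,b)\mapsto(e^{8i\phi}a,e^{12i\phi}b)$ (weights $-4$ and $-6$ applied to $\lambda=e^{2i\phi}$), not the block-diagonal $(e^{2i\phi}u,e^{2i\phi}v)$ you wrote. So your computation of the loop as $\mathrm{diag}(R_{2\phi},R_{2\phi})$ rests on a parametrization of the fiber that is not the one in the statement being proved.

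The conclusion survives: the correct loop $\phi\mapsto(e^{8i\phi},e^{12i\phi})$ is still the double of the loop $\phi\mapsto(e^{4i\phi},e^{6i\phi})$ (indeed any clutching coming from the base map $e^{i\phi}\mapsto e^{2i\phi}$ will be a double), hence is trivial in $\pi_1(\SO)\cong\ZZ/2$, the bundle is trivial, and $\PS_2'\cong S^2\times(\CC^2)^\ast$. So the fix is precisely the citation you anticipated needing; but as written, the fiber identification and the resulting weight computation must be replaced by the $(G_2,G_3)$ coordinates before the homotopy argument applies to the bundle the proposition is actually about.
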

\begin{proof}
The space $\CCC_d(\CC)$ of all discrete subgroups of $\CC$ is homeomorphic to $\CC^2$ via a map $F$ explicited in Section 3 of \cite{PourHubb}. 
This map $F$ is the inverse of 
\[
\Gamma \mapsto \Big( \dfrac{1}{60}\displaystyle\sum_{z\in \Gamma\setminus 0} \dfrac{1}{z^4},\dfrac{1}{140}\displaystyle\sum_{z\in \Gamma\setminus 0} \dfrac{1}{z^6}\Big)
\]
One recognizes at once the use of the functions $G_2$ and $G_3$, classical in the theory of elliptic functions.

Now as above we have two trivializations
\begin{align*}
\Dp\times \CCC_d(\CC) & \to \,\PS_2'\\
(\zeta,\xi),\Gamma &\mapsto 
\left\{\begin{pmatrix} 1 - \rho \, \zeta\xi &  \rho \, \zeta^2  \\ - \rho \, \xi^2  & 1 +  \rho \, \zeta\xi \end{pmatrix}  ;\; \rho \in \Gamma\right\}
\end{align*}
and
\begin{align*}
\Dm\times \CCC_d(\CC) & \to \,\PS_2'\\
(\zeta,\xi),\Gamma &\mapsto 
\left\{\begin{pmatrix} 1 - \rho \, \overline{\zeta}\xi &  \rho \, |\zeta|^2  \\ - \rho \, \dfrac{\overline{\zeta}}{\zeta} \xi^2  & 1 +  \rho \, \overline{\zeta}\xi \end{pmatrix} ;\; \rho \in \Gamma\right\}
\end{align*}

The clutching map associated with these two trivializations is now
\begin{align*}
e^{i\phi} &\mapsto [\Gamma \mapsto e^{2i\phi} \Gamma]
\end{align*}
which becomes, using $F$:
\begin{align*}
S^1_{\text{eq}}& \to \SO\\
e^{i\phi} &\mapsto [(a,b) \mapsto (e^{8i\phi}a,e^{12i\phi}b)]
\end{align*}
(see Lemma 2 in \cite{PourHubb}).
Since $\SO$ admits $S^3\times S^3$ (which is simply connected) as double cover, and since the clutching map above is the double of some loop in $\SO$, we see that it can be homotoped to the trivial loop 
\[
e^{i\phi} \mapsto [(a,b) \mapsto (a,b)]= \Id_{\SO}
\]
therefore $\PS_2'$ is homeomorphic to  $S^2\times\CCC_d(\CC)$.
\end{proof}

\begin{cor}
\label{cor:p2}
$\PS_2$ is homeomorphic to the one-point compactification of  $S^2\times\RR^4$.
\end{cor}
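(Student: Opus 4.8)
The plan is to deduce Corollary \ref{cor:p2} directly from Proposition \ref{prop:p2} by passing to one-point compactifications, the only real content being an identification of the Chabauty completion of $\CCC_d(\CC)$ and a check that compactification commutes with the product in the relevant sense.

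First I would recall, exactly as in the proof of Corollary \ref{cor:p1} and citing \cite{PourHubb}, that the closure $\CCC_d(\CC)\subset \CCC(\CC)$ of the space of all discrete subgroups of $\CC$ — the Chabauty space $\CS(\CC)$ — is obtained from $\CCC_d(\CC)\cong (\CC^2)^\ast=\RR^4\setminus 0$ by adjoining a single point, namely the trivial subgroup $\{0\}$ (geometrically, the lattices degenerate either by a generator going to $\infty$ or by collapsing, and in the $G_2,G_3$ coordinates both correspond to $(a,b)\to 0$ in a way that is captured by a single limit point); equivalently $\PS_2$, being the boundary of the space of abelian parabolic subgroups, is $\PS_2'$ together with the limits one gets as the group degenerates, and the fiberwise degeneration over each $z\in S^2$ is precisely this one-point addition. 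Hence as a set $\PS_2 = \PS_2' \cup (\text{one added point per }z)\,/\!\sim$, but all these added points are identified in the Chabauty topology of $\PC$ because a sequence of parabolic subgroups whose $\CC$-side degenerates converges to the trivial subgroup $\{\Id\}$ of $\PC$ regardless of the underlying fixed point $z$. So $\PS_2$ is the quotient of $\PS_2'\cup (S^2\times\{0\})$ collapsing $S^2\times\{0\}$ to a point.

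Next I would feed in Proposition \ref{prop:p2}: under the homeomorphism $\PS_2'\cong S^2\times (\CC^2)^\ast = S^2\times(\RR^4\setminus 0)$, the ``added'' set $S^2\times\{0\}$ corresponds to $S^2\times\{0\}$ in $S^2\times\RR^4$, and collapsing it to a point gives exactly $\big(S^2\times\RR^4\big)/\big(S^2\times\{0\}\big)$. It remains to see this quotient is the one-point compactification of $S^2\times\RR^4$; this is a general and easy fact: for any locally compact Hausdorff $Y$ (here $Y=S^2$), the space $(Y\times\RR^k)/(Y\times\{0\})$ is the one-point compactification of $Y\times(\RR^k\setminus 0)$, since $Y\times\RR^k$ is a compactification (it is $Y$ times the compactification-by-a-point-at-infinity direction handled radially — concretely $\RR^k\cong \RR^k$ with the radial homeomorphism sending the removed $0$ to $\infty$ and vice versa) and $Y\times\{0\}$ is the ``point at infinity'' locus being crushed. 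I would also remark that the Chabauty space $\CCC(\PC)$ is compact metric, so $\PS_2$ is automatically compact Hausdorff, which lets one conclude that the continuous bijection produced above is a homeomorphism.

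The main obstacle I expect is not topological bookkeeping but justifying the first step carefully: namely that the Chabauty closure of $\PS_2'$ inside $\CCC(\PC)$ adds \emph{only} the trivial subgroup and nothing else — in particular that no non-trivial parabolic or other subgroup of $\PC$ arises as a geometric limit of abelian parabolic subgroups living over moving fixed points $z_n\to z$. For fixed $z$ this is the cited computation of $\CCC(\CCC_d(\CC))$ from \cite{PourHubb}; for $z_n\to z$ one argues that either the fixed points converge (and one reduces to a single fiber, possibly after conjugating, using that the bundle is locally trivial and the trivializations above are continuous in $(\zeta,\xi)$), or the groups themselves degenerate to $\{\Id\}$. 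Making this uniform is precisely the kind of statement the Reduction Lemma (Proposition \ref{prop:redlem}) is designed for, applied to the two explicit trivialization maps $\Dp\times\CCC_d(\CC)\to\PC$ and $\Dm\times\CCC_d(\CC)\to\PC$; I would phrase the argument so that the closure is computed trivialization by trivialization and then glued, exactly paralleling how Corollary \ref{cor:p1} is obtained from Proposition \ref{prop:p1}.
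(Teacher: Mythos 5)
There is a genuine gap, and it occurs at the very first step. You assert that the Chabauty closure of the space of non-trivial discrete subgroups of $\CC$ is obtained from $(\CC^2)^\ast$ by adjoining the single point $\{0\}$. That is false: discrete subgroups of $\CC$ also degenerate to \emph{non-discrete} closed subgroups. For instance $\tfrac{1}{n}\ZZ\to\RR$, $\langle \tfrac{1}{n},i\rangle\to\RR+i\ZZ$, and $\langle\tfrac{1}{n},\tfrac{i}{n}\rangle\to\CC$; so the closure of $\CCC_d(\CC)$ is all of $\CCC(\CC)$, which by Pourezza--Hubbard is $S^4$, and the complement of $\CCC_d(\CC)\cong\CC^2$ in it is a $2$-dimensional family of subgroups containing a line, not a point. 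Correspondingly, $\PS_2$ contains, over each fixed point $z\in S^2$, the one-parameter parabolic subgroups, the $\ZZ\times\RR$-type subgroups and the full translation group --- these are exactly what make the fiber of $\PS_2\setminus\{\Id\}$ equal to $\CCC(\CC)\setminus\{0\}\cong S^4\setminus\{\mathrm{pt}\}\cong\RR^4$, which is where the $\RR^4$ in the statement comes from. The paper's proof is precisely this: extend the trivialization of Proposition \ref{prop:p2} to a homeomorphism $\PS_2\setminus\{\Id\}\cong S^2\times(\CCC(\CC)\setminus\{0\})\cong S^2\times\RR^4$, and then use compactness of $\PS_2\subset\CCC(\PC)$ to identify $\PS_2$ with the one-point compactification. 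Your model misses the entire non-discrete stratum.

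The error then propagates: your candidate space $\bigl(S^2\times\RR^4\bigr)\big/\bigl(S^2\times\{0\}\bigr)$ is not compact (nothing has been done about the end where $(G_2,G_3)\to\infty$), so it cannot be the one-point compactification of anything; the ``general and easy fact'' you invoke is false as stated, and moreover the space it would produce --- a one-point compactification of $S^2\times(\RR^4\setminus 0)$, which has two ends --- is not homeomorphic to the one-point compactification of $S^2\times\RR^4$, which has one. The salvageable part of your write-up is the last paragraph: controlling limits with moving fixed points via the explicit trivializations and the Reduction Lemma is indeed the right way to justify that the fiberwise identification extends continuously to the closure, which the paper leaves implicit. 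But the fiberwise computation itself must be the full Chabauty space $\CCC(\CC)\cong S^4$ minus the trivial subgroup, not $(\CC^2)^\ast$ plus one point.
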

\begin{proof}
We can conclude, simply by considering the possible limits of elements of $\PS_2\setminus \{\Id\}$, that the homeomorphism described at the end of the proof of Proposition \ref{prop:p2} extends to a homeomorphism between $\PS_2\setminus \{\Id\}$ and $S^2\times (\CCC(\CC)\setminus \{0\})$; thus, since $\CCC(\CC)$ is homeomorphic to $S^4$ (see \cite{PourHubb}), we are done.
\end{proof}

\pagebreak

\section{Reduction lemma}
\label{sec:redlem}
\subsection{The two reducing arguments}
From Proposition \ref{prop:nonpar}, we know that any non-trivial non-parabolic closed abelian subgroup of $\PC$ is well defined by two fixed point and a closed subgroup of $\CC^\ast$.
Thus, let us consider a sequence $(G_n)$ of non-trivial non-parabolic closed abelian subgroups of $\PC$.
For all $n$, let us define $((\zeta_1)_n,(\xi_1)_n),((\zeta_2)_n,(\xi_2)_n)\in \overline{\Dp}$ such that $[(\zeta_1)_n:(\xi_1)_n],[(\zeta_2)_n:(\xi_2)_n]\in \CP1$ are the distinct fixed points of one of the generators of $G_n$ (the order does not matter);
let us also define $\Xi_n$ to be the subgroup of $\CC^\ast$ consisting of the rotation/translation quantities of the elements of $G_n$ (see Proposition \ref{prop:nonpar}).
For notational purposes, let us finally define $R_n\ge1$, $\omega_n \in [0,2\pi]$ such that 
\[
R_n e^{i\omega_n}=\dfrac{1}{(\zeta_2)_n(\xi_1)_n-(\zeta_1)_n(\xi_2)_n}
\]

 Taking extractions if necessary, we can always assume that $((\zeta_1)_n,(\xi_1)_n)$ and $((\zeta_2)_n,(\xi_2)_n)$ converge in $\overline{\Dp}$, that $(R_n)$ converges in $[1,\infty]$ and $(e^{i\omega_n}) $ converges in $S^1$.
%In the same way, we can assume also that the subgroups $\Gamma_n$ of $\CC$ used in the following theorem converge for the Chabauty topology of $\CC$.
We denote the limits of these quantities with the subscript $\infty$.

Let us now recall the Reduction Lemma, a key tool in the reducing arguments below.

\begin{prop}[Reduction Lemma]
\label{redlem}
Let $(X,d_X),\, (Y,d_Y)$ be two second countable, locally compact metric spaces. 
Let $( \varphi_n)$ be a sequence of maps from $X$ to $Y$, converging to a continuous proper map~$\varphi$, uniformly on every compact subset.
Assume that for every compact subset $K \subset Y$, the closed subset 
\[
\overline{\bigcup_{n\geq N}\varphi_n^{-1}(K)}
\]
is compact for $N$ large enough.

Then whenever a sequence of closed subsets $F_n\subset X$ converges to a closed subset $F$ in the Hausdorff topology of $X$, the subsets $\overline{\varphi_n(F_n)}$ converge to $\overline{\varphi(F)}$ in the Hausdorff topology of $Y$.
\end{prop}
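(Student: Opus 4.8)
The plan is to prove the two Hausdorff-convergence directions separately: every point of $\overline{\varphi(F)}$ is a limit of points in $\overline{\varphi_n(F_n)}$, and conversely every accumulation point of a sequence $y_n\in\overline{\varphi_n(F_n)}$ lies in $\overline{\varphi(F)}$. Recall that in a locally compact second countable metric space, Hausdorff convergence $F_n\to F$ is equivalent to the conjunction of (a) every $x\in F$ is a limit of some $x_n\in F_n$, and (b) every convergent sequence $x_{n_k}\in F_{n_k}$ has its limit in $F$ (the Kuratowski lower/upper limit conditions). So it suffices to verify (a) and (b) for the image sets $\overline{\varphi_n(F_n)}$ with limit $\overline{\varphi(F)}$; passing to closures only adds limit points, which are harmless since both conditions are closed conditions.

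\emph{Lower bound (condition (a)).} First I would handle a point of the form $\varphi(x)$ with $x\in F$. By (a) for $F_n\to F$, pick $x_n\in F_n$ with $x_n\to x$. Since $\varphi_n\to\varphi$ uniformly on compacta and $\{x_n\}\cup\{x\}$ is compact (a convergent sequence with its limit), we get $\varphi_n(x_n)\to\varphi(x)$: indeed $d_Y(\varphi_n(x_n),\varphi(x))\le d_Y(\varphi_n(x_n),\varphi(x_n))+d_Y(\varphi(x_n),\varphi(x))$, the first term going to $0$ by uniform convergence on the compact set and the second by continuity of $\varphi$. Since $\varphi_n(x_n)\in\varphi_n(F_n)\subset\overline{\varphi_n(F_n)}$, the point $\varphi(x)$ is approximated. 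A general point of $\overline{\varphi(F)}$ is a limit of such $\varphi(x^{(k)})$, and a diagonal argument over $k$ and $n$ (using that $Y$ is metric) finishes condition (a).

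\emph{Upper bound (condition (b)).} This is where the compactness hypothesis on $\overline{\bigcup_{n\ge N}\varphi_n^{-1}(K)}$ is essential, and I expect it to be the main obstacle. Take a sequence $y_{n_k}\in\overline{\varphi_{n_k}(F_{n_k})}$ with $y_{n_k}\to y\in Y$; after a harmless perturbation we may assume $y_{n_k}=\varphi_{n_k}(x_{n_k})$ with $x_{n_k}\in F_{n_k}$. Choose a compact neighborhood $K$ of $y$ in $Y$ (possible since $Y$ is locally compact); then $y_{n_k}\in K$ for $k$ large, so $x_{n_k}\in\varphi_{n_k}^{-1}(K)$, and by hypothesis these points eventually all lie in the \emph{fixed} compact set $\overline{\bigcup_{n\ge N}\varphi_n^{-1}(K)}$ for $N$ large. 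Hence, passing to a further subsequence, $x_{n_k}\to x$ for some $x\in X$; by (b) for $F_n\to F$ we get $x\in F$. Now apply the same uniform-convergence-plus-continuity estimate as above to the compact set $\{x_{n_k}\}\cup\{x\}$ to conclude $\varphi_{n_k}(x_{n_k})\to\varphi(x)$, so $y=\varphi(x)\in\varphi(F)\subset\overline{\varphi(F)}$. The properness of $\varphi$ is what guarantees $\overline{\varphi(F)}$ is itself closed and that $\varphi(F)$ behaves well under these limits — I would invoke it to ensure $\varphi(F)$ is closed when $F$ is, or at least that taking $\overline{\varphi(F)}$ does not introduce spurious points outside the reach of the argument; this is the one place I would be careful to check that no hypothesis is being used circularly.

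The only genuine subtlety, beyond bookkeeping, is making sure the "harmless perturbation" replacing $y_{n_k}\in\overline{\varphi_{n_k}(F_{n_k})}$ by an actual image point is legitimate: since $\overline{\varphi_n(F_n)}$ is the closure, for each $n$ we can pick $x_n\in F_n$ with $d_Y(\varphi_n(x_n),y_n)<1/n$, and then everything above goes through verbatim with $\varphi_n(x_n)$ in place of $y_n$. With both Kuratowski conditions verified, $\overline{\varphi_n(F_n)}\to\overline{\varphi(F)}$ in the Hausdorff topology of $Y$, which is the claim.
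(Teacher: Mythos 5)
Your proof is correct, and it is the standard argument: the paper itself gives no proof here (it defers to Section 5 of \cite{BC1}), and the natural route is exactly the one you take, namely verifying the two Kuratowski conditions that characterize Hausdorff convergence on the one-point compactification of a locally compact, second countable metric space. One clarifying remark on the point you flagged: properness of $\varphi$ is not really load-bearing in your argument --- the compactness of $\overline{\bigcup_{n\ge N}\varphi_n^{-1}(K)}$ is what traps the preimage points in the upper-bound step, and your two inclusions $\overline{\varphi(F)}\subset\liminf\subset\limsup\subset\varphi(F)$ already force $\varphi(F)$ to be closed, so there is no circularity; properness serves only as the a priori reason $\varphi(F)$ is closed (images of closed sets under proper maps between locally compact Hausdorff spaces are closed), and in fact it follows from the other hypotheses. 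Everything else (the diagonal extraction for closure points of $\varphi(F)$ and the $1/n$-perturbation replacing points of $\overline{\varphi_n(F_n)}$ by genuine image points) is handled correctly.
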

\begin{proof}
See Section 5 in \cite{BC1}.
\end{proof}

\begin{rmk}
\label{rmk:omega}
If the functions $\varphi_n$ are only defined on some domains $\Omega_n \subset X$ verifying that for any compact subset $K\subset X$, we can find an integer $N$ such that for all $n\geq N$, $K\subset \Omega_n$ (or, equivalently, if for every neighborhood $\mathcal{N}$ of the infinity-point $\infty\in \overline{X}$ and for all $n$ large enough, $\Omega_n^c\subset \mathcal{N}$), then the conclusion of Proposition \ref{redlem} still holds \textit{if $F_n\subset \Omega_n$ for every $n$}, simply by declaring that $\widetilde{\varphi_n}$ sends every point of $\Omega_n$ to $\infty\in \widetilde{Y}$.
\end{rmk}

%Here is the reducing argument for the geometric limits of closed subgroups of $\PC$. 
Theorems \ref{thm:reducingargument1} and \ref{thm:reducingargument2} below are the reducing arguments needed to reduce the problem of the convergence of non-parabolic groups in $\CS_2$ to problems about convergence in $\CCC(CC)$.
The former deals with the case where the geometric limit is parabolic, the latter deals with the easier case where the geometric limit is non-parabolic.

\begin{thm}
\label{thm:reducingargument1}
Let $G_n$ as above.

Suppose that $R_\infty=\infty$.
For all $n$, define $\Gamma_n$ to be the closed subgroup of $\CC$ defined by $\Gamma_n=R_n \Log(\Xi_n)$.
Then assuming that $\Gamma_n$ converges to some closed subgroup $\Gamma_\infty$ of $\CC$ for the Chabauty topology of $\CC$ (this can always be performed by extracting subsequences), the geometric limit of $G_n$ is the subgroup
 \[
  G_\infty=\left\{ \begin{pmatrix} 1 + \rho e^{i \omega_\infty} \zeta_\infty \xi_\infty &  \rho e^{i \omega_\infty} \zeta_\infty^2 \\
   - \rho e^{i \omega_\infty} \xi_\infty^2 & 1- \rho e^{i \omega_\infty} \zeta_\infty \xi_\infty \end{pmatrix};\; \rho \in \Gamma_\infty\right\}
 \]
 where we defined $\zeta_\infty$ to be $(\zeta_1)_\infty=(\zeta_2)_\infty$ and  $\xi_\infty$ to be $(\xi_1)_\infty=(\xi_2)_\infty$.
Note that non-trivial elements of $G_\infty$ (if any) are parabolic elements of $\PC$ fixing $[\zeta_\infty:\xi_\infty] \in \CP1$.
\end{thm}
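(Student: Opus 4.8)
The plan is to apply the Reduction Lemma (Proposition \ref{redlem}, together with Remark \ref{rmk:omega}) to the family of conjugation-type maps that send a rotation/translation quantity $a \in \CC^\ast$ to the corresponding element $H_{(\zeta_1)_n,(\xi_1)_n,(\zeta_2)_n,(\xi_2)_n,a}$ of $\PC$, reparametrized so that the limiting behaviour is visible. Concretely, I would take $X = \CC$ (thought of as the universal cover of $\CC^\ast$ via $\rho \mapsto e^{\rho/R_n}$, or more directly work with $\CC^\ast$ itself) and $Y = \PC$ sitting inside the space of $2\times 2$ complex matrices up to sign, and define $\varphi_n$ on a suitable domain $\Omega_n$ by
\[
\varphi_n(\rho) = H_{((\zeta_1)_n,(\xi_1)_n),((\zeta_2)_n,(\xi_2)_n),\exp(\rho/R_n)}.
\]
Then $F_n = \Gamma_n = R_n \Log(\Xi_n)$ is a closed subset of $X$, $\varphi_n(F_n)$ is (up to closure and the identity) exactly $G_n$, and the claim is that $\overline{\varphi_n(F_n)} \to \overline{\varphi_\infty(\Gamma_\infty)} = G_\infty$.

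The key computational step is to show that $\varphi_n \to \varphi_\infty$ uniformly on compact subsets of $\CC$, where $\varphi_\infty(\rho)$ is the parabolic matrix displayed in the statement. This is the heart of the ``gaining a generator'' phenomenon: writing $a = \exp(\rho/R_n)$, we have $a - 1 = \rho/R_n + O(R_n^{-2})$, so $\mu_n = \frac{a-1}{(\zeta_2)_n(\xi_1)_n - (\zeta_1)_n(\xi_2)_n} = (a-1) R_n e^{i\omega_n} = \rho e^{i\omega_n} + O(R_n^{-1})$, which converges to $\rho e^{i\omega_\infty}$. Meanwhile $\frac{1}{\sqrt a} \to 1$, and $(\zeta_i)_n, (\xi_i)_n$ both converge to $\zeta_\infty, \xi_\infty$. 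Plugging into the matrix formula of Definition \ref{defn:matrepEH} and taking the limit gives precisely
\[
\begin{pmatrix} 1 + \rho e^{i\omega_\infty}\zeta_\infty\xi_\infty & \rho e^{i\omega_\infty}\zeta_\infty^2 \\ -\rho e^{i\omega_\infty}\xi_\infty^2 & 1 - \rho e^{i\omega_\infty}\zeta_\infty\xi_\infty \end{pmatrix},
\]
and one checks the convergence is uniform on compacta in $\rho$ since all the error terms are $O(R_n^{-1})$ uniformly there. One also verifies $\varphi_\infty$ is continuous and proper as a map $\CC \to \PC$ (it is injective with closed image onto a one-parameter parabolic subgroup together with $\Id$, and proper because $|\rho| \to \infty$ forces the matrix entries to blow up).

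The remaining hypothesis of the Reduction Lemma — that $\overline{\bigcup_{n \geq N} \varphi_n^{-1}(K)}$ is compact for every compact $K \subset \PC$ and $N$ large — is the step I expect to be the main obstacle, since it requires a uniform (in $n$) lower bound forcing $\rho$ to stay bounded whenever $\varphi_n(\rho)$ stays in $K$; a priori the maps $\varphi_n$ could ``compress'' large $\rho$ into a bounded region of $\PC$. I would handle this by noting that $\varphi_n(\rho)$ has off-diagonal entry (say) $\frac{1}{\sqrt{a}}\mu_n (\zeta_1)_n^2$ with $\mu_n = (a-1)R_n e^{i\omega_n}$; since $R_n \to \infty$ and $(\zeta_1)_n \to \zeta_\infty$ (and similarly the lower-left entry involves $(\xi_1)_n \to \xi_\infty$, with $|\zeta_\infty|^2 + |\xi_\infty|^2 = 1$ so at least one is bounded away from $0$), staying in a compact $K$ bounds $|\mu_n|$ hence $|a-1| = |e^{\rho/R_n}-1|$, which for $|\rho|/R_n$ small is comparable to $|\rho|/R_n$; combined with the case analysis of whether $\rho/R_n$ is large, this yields $|\rho| \leq C_K$ uniformly. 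Once all three hypotheses are verified, the Reduction Lemma immediately gives $\overline{\varphi_n(\Gamma_n)} \to \overline{\varphi_\infty(\Gamma_\infty)}$, i.e. $G_n \to G_\infty$ in $\CCC(\PC)$; the final sentence of the statement (that nontrivial elements of $G_\infty$ are parabolic fixing $[\zeta_\infty:\xi_\infty]$) is then read off directly from the displayed matrix, whose trace is $2$ and which fixes $[\zeta_\infty:\xi_\infty]$, completing the proof.
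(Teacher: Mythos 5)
Your proposal is correct and follows essentially the same route as the paper: both verify the hypotheses of the Reduction Lemma for the matrix parametrization, using the same expansion $\mu_n=(a-1)R_ne^{i\omega_n}\to\rho e^{i\omega_\infty}$ and the same properness/preimage-compactness estimates based on the fact that $(\zeta_2)_\infty(\xi_1)_\infty$, $(\zeta_1)_\infty(\zeta_2)_\infty$, $(\xi_1)_\infty(\xi_2)_\infty$, $(\zeta_1)_\infty(\xi_2)_\infty$ cannot all vanish, together with Remark \ref{rmk:omega} on the band $|\IM(z)|\le \pi R_n$ to deal with the $2i\pi R_n$-periodicity. The only structural difference is that the paper applies the Reduction Lemma twice, factoring your single map through the intermediate sets $\FF_n=R_n(\Xi_n-1)$ (first $\psi_n$ with $a=1+z/R_n$, then $\varphi_n(z)=R_n(e^{z/R_n}-1)$), whereas you compose the two maps into one application; this changes nothing essential.
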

\begin{proof}
This proposition will be proved by applying the Reduction Lemma twice.

First, let us define for all $n$ a map $\psi_n: \CC \to \PC$ by 
  $$ z \mapsto (1+z/R_n)^{-1/2} \begin{pmatrix} 1 + z e^{i \omega_n} (\zeta_2)_n (\xi_1)_n &  z e^{i \omega_n} (\zeta_1)_n (\zeta_2)_n \\
   - z e^{i \omega_n} (\xi_1)_n (\xi_2)_n & 1- z e^{i \omega_n} (\zeta_1)_n (\xi_2)_n \end{pmatrix}$$

 Let us also define $\psi : \CC \to \PC$ by 
 $$z\mapsto \begin{pmatrix} 1 + z e^{i \omega_\infty} \zeta_\infty \xi_\infty &  z e^{i \omega_\infty} \zeta_\infty^2 \\
   - z e^{i \omega_\infty} \xi_\infty ^2 & 1- z e^{i \omega_\infty} \zeta_\infty \xi_\infty \end{pmatrix} $$
We need to verify that this family satisfies the condition of the Reduction Lemma. This will be done through the following lemmas. 
\begin{lem}
\label{lem:1.1}
 $\psi$ is proper and continuous. 
\end{lem}
\begin{proof}
This is clear, since whenever $z\to \infty$, at least one of the four entries in the matrix $\psi(z)$ blows off to infinity.
\end{proof}

\begin{lem}
\label{lem:1.2}
 $\psi_n$ converges uniformly to $\psi$ on every compact set. 
\end{lem} 
\begin{proof}
  It is sufficient to prove it for every compact $K_M=\{z\in \CC;\; |z|\leq M\}$.  
 Fix some $\epsilon>0$.  Since $R_n\to \infty$, we can find for every $M>0$ some integer $N$ such that, for all $n\geq N$ and all $z\in K_M$,
\[
 1-\epsilon\leq  (1+z/R_n)^{-1/2}  \leq 1+\epsilon
\]
Therefore, we can also find an integer such that for every $n$ larger than this $N$,
\[
 \|\psi_n(z)-\psi(z)\| \leq \epsilon
\]
holds for every $z\in K_M$, thus we are done. 
\end{proof} 

\begin{lem} 
\label{lem:1.3}
 For any compact subset $K$ of $\PC$, the closed subset of $\CC$ $$\overline{ \bigcup_{n \ge N} \psi^{-n}(K) }$$ is compact for $N$ large enough. 
\end{lem} 
\begin{proof} 
It is sufficient to prove that for every $M>0$ and for every $z$ with $|z|>M$, one of the entries of $\psi_n(z)$ has a modulus greater that some quantity $A(M)$ depending only on $M$, 
with $A(M) \to \infty$ as $M \to \infty$. 
 Recall that $  (\zeta_2)_\infty (\xi_1)_\infty $, $(\zeta_1)_\infty (\zeta_2)_\infty$, $ (\xi_1)_\infty (\xi_2)_\infty$ and $ (\zeta_1)_\infty (\xi_2)_\infty$ cannot vanish at the same time. 
 Suppose for instance that the first entry does not vanish (other cases are similar). 
 Then there is a constant $c>0$ for which we have $|(\zeta_2)_n (\xi_1)_n| \ge c$ for every $n$ larger than some integer $N$.
 Also by taking a larger $N$ if necessary, we may assume that $R_n \ge M$.
  Thus we have 
 $$ |(1+z/R_n)|^{-1/2} |1 + z e^{i \omega_n} (p_2)_n (q_1)_n | \ge \frac{|z| c - 1}{ \sqrt{|z|} \sqrt{ 1/|z| + 1/R_n  } }$$
 $$ \ge  \frac{|z| c - 1}{ \sqrt{2|z| / M} } \ge \sqrt{M} \frac{|z| c - 1}{\sqrt{2 |z|}} \ge (c/\sqrt{2}) M -\frac{1}{\sqrt{2}}$$  
Thus we are done. 
\end{proof} 

Define for all $n$ the subset $\FF_n\subset \CC$ by
\begin{align*}
\FF_n&=\left\{ \dfrac{a-1}{|(\zeta_2)_n(\xi_1)_n-(\zeta_1)_n(\xi_2)_n|} ;\; a \in \Xi_n\right\}\\
           &=\left\{ R_n(a-1) ;\; a \in \Xi_n\right\}
\end{align*}
Putting together Lemmas \ref{lem:1.1}, \ref{lem:1.2}, \ref{lem:1.3} and Proposition \ref{redlem}, we now see that if $\FF_n$ converges to some closed subset $\FF_\infty\subset \CC$ for the Hausdorff topology, then $G_n=\psi_n(\FF_n)$ converges to $\psi(\FF_\infty)$ for the Chabauty topology, hence $G_\infty=\psi(\FF_\infty)$.
We are done for the first step of the proof.

The second step consists of applying the Reduction Lemma again.
For all $n$, define $\varphi_n: \CC \rightarrow \CC$ by
\[
 \varphi_n(z)=R_n(e^{z/{R_n}}-1)
\]
By the series expansion of $\exp$, we see that the sequence $(\varphi_n)$ converges to the identity map of $\CC$, uniformly on every compact subset, which is obviously continuous and proper.
Since $\varphi_n$ is periodic of period $2i\pi R_n$, there is no chance for
\[
\overline{\bigcup_{n\geq N}\varphi_n^{-1}(K)}
\]
to be ever compact. 
Thus, let us use Remark \ref{rmk:omega}, by defining for all $n$ $\Omega_n$ to be the band
\[
 \Omega_n=\{z\in \CC;\; |\IM(z)|\le \pi R_n \}
\]
which verifies the required conditions of Remark \ref{rmk:omega}, because $R_n\to \infty$.

Then, we have the following:
\begin{lem}
For every $M>0$, for every $z\in \Omega_n$, $\|z\|_\infty \ge M$ implies $|\varphi_n(z)|\ge M/2$ provided $n$ is large enough.
\end{lem}
\begin{proof}
Let us suppose $n$ is large enough so that $R_n\ge M$.
 If $|\RE(z)|\ge M$, then $|\varphi_n(z)|\ge R_n(|e^{z/{R_n}}|-1)\ge R_n(e^{M/{R_n}}-1) \ge M$.
Thus, let us suppose $z$ is in the closed subset
\[
 U_n=\{z\in \CC;\; |\RE(z)|\le M,\, M\le |\IM(z)|\le \pi R_n\} 
\]
Now $U_n$ is mapped homeomorphically by $[z\mapsto e^{z/{R_n}}]$ onto a horseshoe 
\[
 \{z\in \CC;\; e^{-M/{R_n}}\le|z|\le e^{M/{R_n}},\, M/{R_n}\le\Arg{z}\le 2\pi-M/{R_n}\}
\]
that avoids an open ball of radius $\sin(M/{R_n})$ around 1.
Thus $\varphi_n(U_n)$ avoids a ball of radius $R_n \sin(M/{R_n})$ around 0.
In view of the series expansion of $\sin$, and since $R_n\to \infty$, we must have $R_n \sin(M/{R_n})\ge M/2$ for $n$ large enough, and we are done.
\end{proof}

Thus, applying Proposition \ref{redlem} again, we see that if $\Gamma_n\to \Gamma_\infty$, then $\FF_n=\varphi(\Gamma_n)\to \Gamma_\infty$, hence $\FF_\infty=\Gamma_\infty$. 

All in all, we have $G_\infty=\psi(\Gamma_\infty)$, and this completes the proof of Theorem \ref{thm:reducingargument1}.
\end{proof}

\begin{thm}
\label{thm:reducingargument2}
Let $G_n$ as above.

Suppose that $R_\infty<\infty$.
Then the geometric limit of $G_n$ is the subgroup
 \[
  G_\infty=\left\{  H_{((\zeta_1)_\infty,(\xi_1)_\infty),((\zeta_2)_\infty,(\xi_2)_\infty),a};
  \; a \in \Xi_\infty\right\}
 \]
\end{thm}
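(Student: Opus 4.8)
The plan is to mirror the proof of Theorem \ref{thm:reducingargument1} but in the much simpler regime where the denominator $(\zeta_2)_n(\xi_1)_n-(\zeta_1)_n(\xi_2)_n$ does not degenerate. First I would recall that, by Proposition \ref{prop:nonpar}, writing the subgroup $G_n$ is the same as recording the unordered pair of fixed points together with $\Xi_n\subset\CC^\ast$, and that $G_n=\{H_{((\zeta_1)_n,(\xi_1)_n),((\zeta_2)_n,(\xi_2)_n),a};\ a\in\Xi_n\}$. Since $R_\infty<\infty$, the quantity $\mu_n=\frac{a-1}{(\zeta_2)_n(\xi_1)_n-(\zeta_1)_n(\xi_2)_n}=R_ne^{i\omega_n}(a-1)$ stays under control: the map sending $a$ to the matrix $H_{((\zeta_1)_n,(\xi_1)_n),((\zeta_2)_n,(\xi_2)_n),a}$ is jointly continuous in all its arguments on the relevant domain, because the only possible source of blow-up — the vanishing of $(\zeta_2)_n(\xi_1)_n-(\zeta_1)_n(\xi_2)_n$, i.e. the two fixed points colliding — is excluded when $R_\infty<\infty$.

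The cleanest route is again the Reduction Lemma. Define $\varphi_n:\CC^\ast\to\PC$ by $\varphi_n(a)=H_{((\zeta_1)_n,(\xi_1)_n),((\zeta_2)_n,(\xi_2)_n),a}$ and $\varphi(a)=H_{((\zeta_1)_\infty,(\xi_1)_\infty),((\zeta_2)_\infty,(\xi_2)_\infty),a}$ (both sending $1$ to $\Id$, so really maps from $\CCC(\CC^\ast)$-relevant data; more precisely one works on $\CC^\ast$ as the space of rotation/translation quantities). Then one checks the three hypotheses: (i) $\varphi$ is continuous and proper, since as $a\to 0,\infty$ on $\CC^\ast$ the factor $1/\sqrt a$ together with $\mu_\infty=\frac{a-1}{(\zeta_2)_\infty(\xi_1)_\infty-(\zeta_1)_\infty(\xi_2)_\infty}$ forces one matrix entry to escape every compact set of $\PC$ (here one uses that the denominator is a nonzero constant, guaranteed by $R_\infty<\infty$, and one must check compactness is preserved at both ends $a\to0$ and $a\to\infty$, which also covers the transition from elliptic to hyperbolic); (ii) $\varphi_n\to\varphi$ uniformly on compacts, which is immediate from $((\zeta_i)_n,(\xi_i)_n)\to((\zeta_i)_\infty,(\xi_i)_\infty)$ and $R_ne^{i\omega_n}\to R_\infty e^{i\omega_\infty}$ and continuity of the matrix formula in these parameters; (iii) for every compact $K\subset\PC$, $\overline{\bigcup_{n\ge N}\varphi_n^{-1}(K)}$ is compact in $\CC^\ast$ for $N$ large, which follows by a uniform-in-$n$ version of the properness estimate, exactly as in Lemma \ref{lem:1.3}: bound below the largest matrix entry by something tending to infinity as $|a|\to\infty$ or $|a|\to 0$, uniformly once $R_n$ is within $\epsilon$ of $R_\infty$ and the fixed points are close to their limits.

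Having verified (i)--(iii), extract a subsequence so that $\Xi_n\to\Xi_\infty$ in $\CCC(\CC^\ast)$ (possible by compactness of the Chabauty space), and apply Proposition \ref{redlem} with $F_n=\Xi_n$: this gives $\overline{\varphi_n(\Xi_n)}\to\overline{\varphi(\Xi_\infty)}$ in the Hausdorff topology of $\PC$, i.e. $G_n\to\{H_{((\zeta_1)_\infty,(\xi_1)_\infty),((\zeta_2)_\infty,(\xi_2)_\infty),a};\ a\in\Xi_\infty\}$, which is precisely the claimed $G_\infty$. One small point to dispatch: $\varphi_n(\Xi_n)$ is already closed (it is the closed subgroup $G_n$), and $\varphi(\Xi_\infty)$ is closed too since $\Xi_\infty$ is closed in $\CC^\ast$ and $\varphi$ is proper, so the closure bars are harmless. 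Finally, the extracted-subsequence argument upgrades to the full sequence in the usual way: the limit is independent of the subsequence because it is determined by the limits $((\zeta_i)_\infty,(\xi_i)_\infty)$ and $\Xi_\infty$, which were obtained by the standing extraction hypotheses on $G_n$.

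I expect the main obstacle to be the properness statements (i) and (iii), specifically organizing a single estimate that handles \emph{both} ends $a\to 0$ and $a\to\infty$ of $\CC^\ast$ and is uniform in $n$; the factor $1/\sqrt a$ makes the $a\to 0$ end a little delicate, and one has to be careful that when $\Xi_n$ contains elements of modulus very close to $1$ (nearly-elliptic) the corresponding matrices stay in a fixed compact set — which they do, but writing it cleanly requires the observation that $\mu_n$ is bounded on bounded subsets of $a$ precisely because $R_n$ is bounded. Everything else is a routine transcription of the first reducing argument with the degenerate factor $(1+z/R_n)^{-1/2}$ replaced by the harmless $1/\sqrt a$.
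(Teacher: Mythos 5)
Your proposal is correct and is essentially the paper's own proof: the paper applies the Reduction Lemma with exactly the maps you call $\varphi_n$ (written out there as $\psi_n(z)=z^{-1/2}$ times the matrix with $\mu=(z-1)R_ne^{i\omega_n}$, i.e.\ $H_{((\zeta_1)_n,(\xi_1)_n),((\zeta_2)_n,(\xi_2)_n),z}$) and leaves the verification of the hypotheses to the reader. Your additional care about properness at both ends $a\to 0$ and $a\to\infty$ fills in precisely what the paper omits.
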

\begin{proof}
We leave to the reader to check that we can apply the Reduction Lemma as before, with $\psi_n :\CC^\ast \to \PC$ defined by
$$ z \mapsto z^{-1/2} \begin{pmatrix} 1 + (z-1)R_n e^{i \omega_n} (\zeta_2)_n (\xi_1)_n &  (z-1)R_n e^{i \omega_n} (\zeta_1)_n (\zeta_2)_n \\
   - (z-1)R_n e^{i \omega_n} (\xi_1)_n (\xi_2)_n & 1- (z-1)R_n e^{i \omega_n} (\zeta_1)_n (\xi_2)_n \end{pmatrix} $$
   and  $\psi_\infty : \CC^\ast \to \PC$ by
$$ z \mapsto z^{-1/2} \begin{pmatrix} 1 + (z-1)R_n e^{i \omega_\infty} (\zeta_2)_\infty (\xi_1)_\infty &  (z-1)R_n e^{i \omega_\infty} (\zeta_1)_\infty (\zeta_2)_\infty \\
   - (z-1)R_n e^{i \omega_\infty} (\xi_1)_\infty (\xi_2)_\infty & 1- (z-1)R_n e^{i \omega_\infty} (\zeta_1)_\infty (\xi_2)_\infty \end{pmatrix} $$

We can therefore conclude that since $\Xi_n$ converges to $\Xi_\infty$, $G_n=\psi_n(\Xi_n)$ converges to $\psi(\Xi_\infty)$;
thus $G_\infty=\psi(\Xi_\infty)$, and we are done.
\end{proof}

\subsection{Geometric view of $R_n$ and $\omega_n$}
\label{subsec:geometricviewRomega}
Let us give geometric interpretations for $R$ and $\omega$, defined as above by 
\[
 R e^{i\omega}=\dfrac{1}{\zeta_2\xi_1-\zeta_1\xi_2}
\]
where $(\zeta_1,\xi_1),(\zeta_2,\xi_2)\in \Dp$.
We already saw in Subsection \ref{subsec:Theta} that $\zeta_2\xi_1-\zeta_1\xi_2$ can be interpreted as a square rooted discriminant in a model of $\CPP$. 

\begin{lem}
$R$ is the inverse of the spherical distance between $[\zeta_1:\xi_1]$ and $[\zeta_2:\xi_2]\in \CP1$.
More precisely, $1/R=|\zeta_2\xi_1-\zeta_1\xi_2|$ is equal to the distance, in $\RR^3=\CC\times\RR$, between the respective stereographic projections of $(\zeta_1,\xi_1)$ and $(\zeta_2,\xi_2)\in \Dp$ on the sphere $S^2$ of center $(0,1/2)$ and radius $1/2$.
\end{lem}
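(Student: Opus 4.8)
The plan is to compute both quantities directly using the stereographic projection formula recalled in Remark~\ref{rmk:stereo}, namely that $(\zeta,\xi)\in\overline{\Dp}$ maps to $(\zeta\xi,\xi^2)\in S^2\subset\CC\times\RR$, where $S^2$ is the sphere of center $(0,1/2)$ and radius $1/2$. So I would let $p_1=((\zeta_1)\xi_1,\xi_1^2)$ and $p_2=((\zeta_2)\xi_2,\xi_2^2)$ be the two image points, and simply expand the squared Euclidean distance $\|p_1-p_2\|^2 = |\zeta_1\xi_1-\zeta_2\xi_2|^2 + (\xi_1^2-\xi_2^2)^2$ in $\RR^3=\CC\times\RR$.

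The key algebraic step is to show this equals $|\zeta_2\xi_1-\zeta_1\xi_2|^2$. Expanding the first term gives $|\zeta_1|^2\xi_1^2 + |\zeta_2|^2\xi_2^2 - 2\xi_1\xi_2\RE(\zeta_1\overline{\zeta_2})$, and the second gives $\xi_1^4 - 2\xi_1^2\xi_2^2 + \xi_2^4$. Now I would use the defining relation of $\Dp$, namely $|\zeta_i|^2 + \xi_i^2 = 1$, to replace $|\zeta_i|^2 = 1-\xi_i^2$; this turns $|\zeta_1|^2\xi_1^2 + |\zeta_2|^2\xi_2^2 + \xi_1^4 + \xi_2^4$ into $\xi_1^2 + \xi_2^2$. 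So $\|p_1-p_2\|^2 = \xi_1^2 + \xi_2^2 - 2\xi_1^2\xi_2^2 - 2\xi_1\xi_2\RE(\zeta_1\overline{\zeta_2})$. On the other hand, $|\zeta_2\xi_1-\zeta_1\xi_2|^2 = |\zeta_2|^2\xi_1^2 + |\zeta_1|^2\xi_2^2 - 2\xi_1\xi_2\RE(\zeta_1\overline{\zeta_2}) = (1-\xi_2^2)\xi_1^2 + (1-\xi_1^2)\xi_2^2 - 2\xi_1\xi_2\RE(\zeta_1\overline{\zeta_2}) = \xi_1^2 + \xi_2^2 - 2\xi_1^2\xi_2^2 - 2\xi_1\xi_2\RE(\zeta_1\overline{\zeta_2})$, which is exactly the same expression. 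Hence $\|p_1-p_2\| = |\zeta_2\xi_1-\zeta_1\xi_2| = 1/R$, proving the "more precisely" clause.

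For the first sentence, I would then recall (or simply observe) that the spherical distance between two points of $\CP1$ is, by definition in this paper, the chordal/Euclidean distance between their images on $S^2$ under stereographic projection — or, if the paper means the intrinsic arc-length metric, note that the two are monotonically related, so "inverse of the spherical distance" is to be read in the chordal normalization; in any case the displayed formula $1/R=|\zeta_2\xi_1-\zeta_1\xi_2|$ is the precise statement and the opening sentence is just its verbal summary. I would include a one-line remark that the computation also makes sense on $\partial\Dp$ (where $\xi_i=0$, $|\zeta_i|=1$), giving $\|p_1-p_2\|=|\zeta_2\xi_1-\zeta_1\xi_2|=0$ consistently with both points landing at the north pole.

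The only mild obstacle is purely bookkeeping: keeping the real-part cross-terms straight when expanding $|\zeta_1\xi_1-\zeta_2\xi_2|^2$ over $\CC$ (the $\xi_i$ are real, so they pull out of the modulus, but one must not confuse $\zeta_1\overline{\zeta_2}$ with $\overline{\zeta_1}\zeta_2$ — their real parts agree, which is what makes the identity work). There is no conceptual difficulty; the substitution $|\zeta_i|^2 = 1-\xi_i^2$ from the hemisphere equation is what collapses everything. I would present the two expansions in a short aligned computation and conclude.
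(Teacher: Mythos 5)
Your proposal is correct and follows exactly the paper's argument: apply the stereographic projection $(\zeta,\xi)\mapsto(\zeta\xi,\xi^2)$ from Remark~\ref{rmk:stereo} and verify the identity $|\zeta_2\xi_1-\zeta_1\xi_2|^2=|\zeta_2\xi_2-\zeta_1\xi_1|^2+(\xi_1^2-\xi_2^2)^2$ using $|\zeta_i|^2=1-\xi_i^2$; the paper leaves this as ``a straightforward computation'' while you carry it out explicitly, and your algebra checks out. No gaps.
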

\begin{proof}
Recall from Remark \ref{rmk:stereo} that the stereographic projection of $\Dp$ onto $S^2\setminus S$ is given by
\[
(\zeta,\xi)\mapsto (\zeta\xi,\xi^2)
\]
Now, using that $|\zeta_i|^2=1-\xi_i^2$, it is a straightforward computation to show that 
 \[
  |\zeta_2\xi_1-\zeta_1\xi_2|^2=|\zeta_2\xi_2-\zeta_1\xi_1|^2+(\xi_1^2-\xi_2^2)^2.
 \]
\end{proof}

\begin{lem}
$\omega$ is the opposite of the angle between $[\zeta_1:\xi_1]$ and $[\zeta_2:\xi_2]\in \CP1$.
More precisely, $-\omega=\Arg(\zeta_2\xi_1-\zeta_1\xi_2)$ is equal to the argument, in $\CC\times \{1\}\equiv\CC$, between the respective stereographic projections of $(\zeta_1,\xi_1)$ and $(\zeta_2,\xi_2)\in \Dp$ on the horizontal plane passing through $N=(0,1)$.
\end{lem}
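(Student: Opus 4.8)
The plan is to make the statement precise by recalling explicit coordinates and then reducing the claim to the identity
\[
\zeta_2\xi_1-\zeta_1\xi_2 = (\zeta_2\xi_2 - \zeta_1\xi_1) + i\,(\text{something real}),
\]
or more accurately to an expression whose argument is manifestly the angle between the two stereographic projections onto the plane through $N$. First I would recall Remark \ref{rmk:stereo}: the stereographic projection of $\Dp$ onto $S^2\setminus S$ is $(\zeta,\xi)\mapsto(\zeta\xi,\xi^2)$, but here we want the projection onto the horizontal plane through $N=(0,1)$, i.e. the classical chart $\CC\times\{1\}\equiv\CC$ sending $[\zeta:\xi]\in\CP1$ to $\zeta/\xi$. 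So the two projected points are $w_j = \zeta_j/\xi_j \in \CC$ for $j=1,2$ (assuming for the moment that $\xi_1,\xi_2\neq 0$; the degenerate cases will be handled by continuity, exactly as in the previous lemma's setup), and the angle in question is $\Arg(w_1-w_2)$ or $\Arg(w_2-w_1)$ depending on orientation conventions.

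Next I would compute directly:
\[
w_2 - w_1 = \frac{\zeta_2}{\xi_2} - \frac{\zeta_1}{\xi_1} = \frac{\zeta_2\xi_1 - \zeta_1\xi_2}{\xi_1\xi_2}.
\]
Since $\xi_1,\xi_2\in(0,1]$ are real and positive (points of $\Dp$ have $\xi\in(0,1]$), the denominator $\xi_1\xi_2$ is a positive real number, hence $\Arg(w_2-w_1) = \Arg(\zeta_2\xi_1 - \zeta_1\xi_2)$. By the definition $Re^{i\omega} = 1/(\zeta_2\xi_1-\zeta_1\xi_2)$ with $R\geq 1$, we have $\Arg(\zeta_2\xi_1-\zeta_1\xi_2) = -\omega \pmod{2\pi}$, which is exactly the claimed formula $-\omega = \Arg(\zeta_2\xi_1-\zeta_1\xi_2)$, interpreted as the argument of $w_2-w_1$ — the "angle between the projections" measured from $w_1$ to $w_2$ (matching the paper's phrasing "angle between $[\zeta_1:\xi_1]$ and $[\zeta_2:\xi_2]$", and the sign — "opposite of the angle" — coming from the reciprocal). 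I would then note that the value is independent of which representatives $(\zeta_j,\xi_j)\in\Dp$ we chose, since on $\Dp$ the representative is unique anyway.

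The only genuine subtlety — and the step I expect to need the most care — is the boundary behavior when one of the $\xi_j$ vanishes, i.e. when a fixed point is $[1:0]=\infty$. In that case $w_j=\zeta_j/\xi_j$ is undefined as a point of $\CC$, but the quantity $\zeta_2\xi_1-\zeta_1\xi_2$ still makes perfect sense and is nonzero (the two points are distinct), so its argument $-\omega$ is still well-defined; the "geometric interpretation" then simply has to be read as a limiting statement, or one observes that $R e^{i\omega}$ extends continuously to $\overline{\Dp}$ away from the diagonal, just as was implicitly used in the companion lemma about $R$. I would handle this with a one-sentence remark rather than a separate case analysis, since the algebraic identity $w_2 - w_1 = (\zeta_2\xi_1-\zeta_1\xi_2)/(\xi_1\xi_2)$ already does all the work wherever it is valid, and both sides are continuous in $(\zeta_1,\xi_1),(\zeta_2,\xi_2)$ on the locus where they are defined.
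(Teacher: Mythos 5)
Your proposal is correct and coincides with the paper's own (one-line) proof: the key identity $\zeta_2/\xi_2-\zeta_1/\xi_1=(\zeta_2\xi_1-\zeta_1\xi_2)/(\xi_1\xi_2)$ together with the observation that multiplying by the positive real $\xi_1\xi_2$ does not change the argument is exactly what the paper uses. Your extra remark about the boundary case $\xi_j=0$ is harmless but unnecessary, since the lemma is stated for points of $\Dp$, where $\xi\in(0,1]$.
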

\begin{proof}
Since multiplying a number by a positive real does not change the argument,  
\[
-\omega=\Arg(\zeta_2\xi_1-\zeta_1\xi_2)=\Arg(\zeta_2/\xi_2-\zeta_1/\xi_1).
\]
\end{proof}

\begin{figure}[ht]
\begin{center}
\includegraphics[scale=0.4]{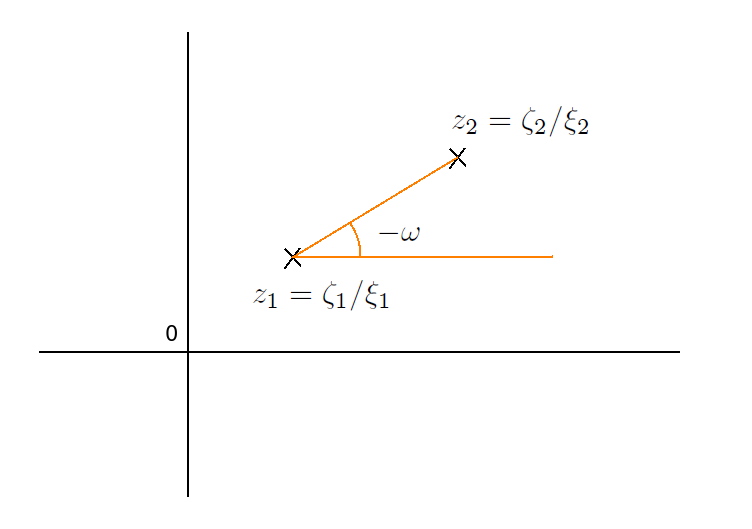}
\end{center}
\caption{$-\omega$ as an angle in $\CC\times\{1\}$.}
\label{fig:omega}
\end{figure}

\subsection{Geometric limits seen with cylinders}
\label{subsec:cylinders}
We would like now to give a geometric interpretation of Theorem \ref{thm:reducingargument1}.
Let $(G_n)$ be a sequence of non-trivial non-parabolic closed abelian subgroups of $\PC$ as above. 
Recall notations for $((\zeta_1)_n,(\xi_1)_n),((\zeta_2)_n,(\xi_2)_n)\in \overline{\Dp}$, $R_n\ge1$, $\omega_n \in [0,2\pi]$.
Assuming here that $R_n\to \infty$, i.e. that the distance between the fixed points $[(\zeta_1)_n:(\xi_1)_n],[(\zeta_2)_n:(\xi_2)_n]\in \CP1$ of $G_n$ tends to 0,
recall the notation $\Gamma_n=R_n \Log(\Xi_n)$ with $\Xi_n$ the multiplicative group of rotation/complex translation quantities of elements of $G_n$ (see beginning of Section \ref{sec:redlem}).

For all $n$, $\Gamma_n$ is a subgroup of $\CC$ containing $2i\pi R_n$. 
Equivallently, $\Gamma_n/2i\pi R_n\ZZ$ is a subgroup of $\CC/2i\pi R_n\ZZ$, which is a cylinder.
Let us view this cylinder in $\RR^3=\CC\times\RR$ as being the cylinder with circumference $2i\pi R_n$ (i.e. radius $R_n$) and with center line of equation 
\[
 \{(z,t)\in\CC\times \RR;\; \IM(z)=0 \text{ and } t=R_n \}
\]
This cyclinder intersects the plane $\CC\times \{0\}$ in the line $\{\IM(z)=0\text{ and } t=0\}$.

Otherwise put, this cyclinder is the image of $\CC$ under the map 
\begin{align*}
 \CC\to &\CC\times \RR\\
x+iy\mapsto &(x+iR_n \sin y, R_n(1-\cos y))
\end{align*}

Better yet, imagine this cylinder as being rotated by an angle $\omega_n$, as in the following drawing, Figure \ref{fig:cylinders}. 
For notational convenience, let us note this cylinder $C_n$.

\begin{figure}[ht]
\begin{center}
\includegraphics[scale=0.25]{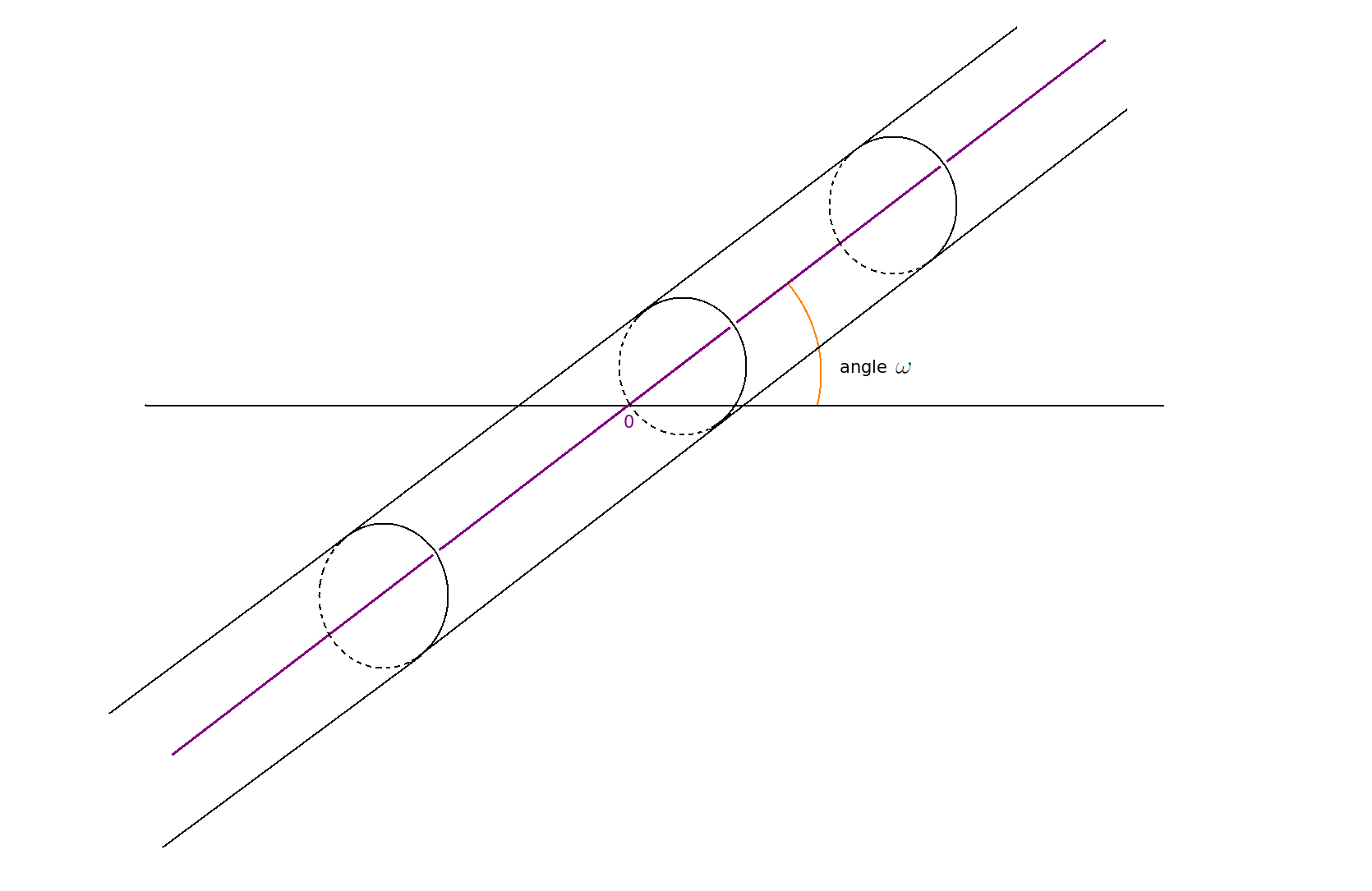}
\end{center}
\caption[Cylinder $C_n$ associated to $G_n$]{Cylinder $C_n$ associated to $G_n$, with in purple the line corresponding to the intersection of $C_n$ with the plane $\CC\times \{0\}$.}
\label{fig:cylinders}
\end{figure}

Now when $n\to \infty$, $R_n\to \infty$ also, i.e. the cylinders $C_n$ become wider and wider; therefore $(C_n)$ converges, for the Hausdorff topology of $\RR^3$, to the plane $\CC\times \{0\}$. 

The last step of the description is to draw for all $n$ the subgroup $\Gamma_n/2i\pi R_n\ZZ$ on $C_n$, simply as the image of $\Gamma_n$ under the map
\begin{align*}
 \CC\to &\CC\times \RR\\
x+iy\mapsto &(e^{i\omega_n}(x+iR_n \sin y), R_n(1-\cos y))
\end{align*}

As $n\to \infty$ and the cylinders $C_n$ become wider and wider, these images look more and more like a closed subgroup of $\CC\times \{0\}$, that we recognize to be $e^{i\omega_\infty}\Gamma_\infty$.

Finally, plug the values of this subgroup $e^{i\omega_\infty}\Gamma_\infty\subset \CC$ in the matrix representation so that we obtain 
\[
\left\{ \begin{pmatrix} 1 + \rho\, \zeta_\infty \xi_\infty & \rho\, \zeta_\infty^2 \\
   -\rho \xi_\infty^2 & 1- \rho\, \zeta_\infty \xi_\infty \end{pmatrix};\; \rho \in e^{i\omega_\infty}\Gamma_\infty
\right\}
\]
which is the geometric limit $G_\infty $ of $G_n$.

\pagebreak

\section{Case $R_\infty<\infty$: the Chabauty space of $\CC^\ast$}
\label{sec:cstar}
In this section, we study the space $\CCC(\CC^*)$ of all closed subgroups of the multiplicative group $\CC^*$ in the Chabauty topology. 
This is justified by the reducing argument Theorem \ref{thm:reducingargument2}.

Note that $\CC^*$ is a conformal annulus with infinite modulus, hence there is a conformal isomorphism (namely the logarithm function) between $\CC^*$ and $\CC / 2 i\pi \ZZ$.
It induces a homeomorphism between $\CCC(\CC^*)$ and $\CCC(\CC / 2 i\pi \ZZ)$.
Considering the natural covering map $\pi: \CC \to \CC / 2 i\pi \ZZ$, we can identify $\CCC(\CC / 2 i\pi \ZZ)$ and the space $\WW(2\pi)\subset \CCC(\CC)$ of closed subgroups of $\CC$ that contain $2i\pi$, 
so that studying $\CCC(\CC^*)$ is equivalent to studying $\WW(2\pi)$ in the geometric topology.

The reason why we prefer to work with $\WW(2\pi)$ instead of $\CCC(\CC^*)$ is twofold. 
First, powers of an element $z\in \CC^*$ all lie in some logarithmic spiral; as a contrast, multiples of an element $z\in \CC$ lie in a line; thus it is in general easier to visualize geometric behaviors in the latter space than in the former.
Second, and more importantly, some of the result in this section can be directly transposed in the context of Theorem \ref{thm:reducingargument1}. 
Thus, let us introduce the following definition, where we think of $l$ to be $2\pi R_n$, with $R_n$ the inverse of the spherical distance between to points of $\CP1$ (see Subsection \ref{subsec:cylinders}).

\begin{defn}
\label{defn:wwl}
For every $l>0$, define $\WW(l)$ to be the space of all closed subgroups of $\CC$ that contain $l i$.
$\WW(l)$ is equipped with the Chabauty topology. 
\end{defn}

Using this notation, and in view of Theorems \ref{thm:reducingargument1} and \ref{thm:reducingargument2}, let us spell out what questions we wish to answer.
%we are interested in the two following questions.

\begin{ques}
\label{ques:q1}
For a fixed $l$ (for instance $l=2\pi$), what is the space of geometric limits of sequences of elements of $\WW(l)$?
Equivallently, what is the Chabauty space $\CCC(\CC^*)$ of $\CC^*$?
\end{ques}

\begin{ques}
\label{ques:q2}
What is the space of geometric limits of sequences $(\Gamma_n)$, with $\Gamma_n\in\WW(l_n)\subset \CCC(\CC)$ for all $n$, and with $l_n\to \infty$?
\end{ques}

Question \ref{ques:q1} will be answered shortly. 
We would like to point out that the description in Subsection \ref{subsec:wwl} is already known, and can be found in \cite{Ismael}.
We include a proof performed in our ``linearized'' context, both for making the present paper self-contained, and because we believe it is  enlightenning for answering Question \ref{ques:q2}.
Pictures in Subsection \ref{subsec:pict} are new.

\subsection{$\WW(l)$}
\label{subsec:wwl}

Recall that any closed subgroup of $\CC$ is isomorphic to exactly one of the following groups: $\{0\}$, $\ZZ$, $\ZZ^2$, $\RR$, $\ZZ \times \RR$, $\CC$.

%We will see shortly that $\CCC(\CC^*)$ could be studied as a subspace of $\mathcal{C}(\CC)$, where $\CC$ is an additive group. This has a couple of benefits. First, we know the complete list of closed subgroups of $\CC$ up to isomorphism; namely, $\{Id\}$, $\ZZ$, $\ZZ^2$, $\RR$, $\ZZ \times \RR$, $\CC$. Second, unlike the subgroups of $\CC^*$ which frequently have spiraling shapes, the subgroups of $\CC$ have linear shapes. Hence, it is often much easier to see what is happening when we have a sequence of closed subgroups. 

%\begin{lem}
%The convergence of a sequence $\SSS$ in  $\mathcal{C}(\CC / l i \ZZ)$ is equivalent to the convergence of $\pi^{-1}(\SSS)$ in $\WW(l)$. 
%\end{lem}

%Therefore, studying $\mathcal{C}(\CC^*)$ is equivalent to studying $\WW(l)$ in the geometric topology. The following lemma presents the complete list of the elements of $\WW(l)$. 

%Later, we will see that this corresponds to the case when we consider the sequence of Kleinian groups of one-hyperbolic generator whose fixed points get arbitrarily closer to each other. 

\begin{lem}
\label{lem:subCl}
Let $l>0$. The followings are all the closed subgroups of $\CC$ containing $l$. 
\begin{itemize} 
\item $A^{l/m} := (l/m) i \ZZ$ for some $m \in \NN$,
\item $B_z^{l/m} :=  z\ZZ + (l/m)i\ZZ$ for some $m \in \NN$ and for $z \in \CC$ with $\RE(z) >0$ and $\IM(z) \in [0,l/m]$, 
\item $C_x := x \ZZ + i \RR$ for $x >0$, 
\item $D_{t}^{l/m} := (l/m)i\ZZ + (1+i t)\RR$ with $t \in \RR$ and $m \in \NN$.
\item $A^0 = C_\infty := i \RR$, 
\item $C_0 := \CC$,  
\end{itemize}  
\end{lem}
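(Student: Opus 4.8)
The plan is to classify closed subgroups of $\CC$ that contain a fixed nonzero vector, which after rotating we may take to be $li$ with $l>0$. The backbone is the structure theorem just quoted: every closed subgroup of $\CC$ is isomorphic to one of $\{0\},\ZZ,\ZZ^2,\RR,\ZZ\times\RR,\CC$. So I would simply run through these six isomorphism types in turn, and for each one enumerate which concrete subgroups of $\CC$ of that type actually contain $li$, taking care to pin down a canonical set of parameters so that each subgroup is listed exactly once.

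First I would dispose of the rank-one discrete case: a subgroup isomorphic to $\ZZ$ containing $li$ must be $\ZZ v$ for some $v$ with $li\in\ZZ v$, forcing $v=(l/m)i$ for some positive integer $m$ (up to sign, which does not change the group); this gives the family $A^{l/m}=(l/m)i\ZZ$. Next, the $\RR$ case: a line through the origin containing $li$ is the imaginary axis $i\RR$, which is $A^0=C_\infty$ — a degenerate limit of the $A^{l/m}$ as $m\to\infty$, and also a degenerate case of the $C_x$ below. The $\CC$ case is trivially $C_0=\CC$ itself. For the lattice ($\ZZ^2$) case I would argue that such a lattice $\Lambda\ni li$ must contain $(l/m)i$ as the shortest vector on the imaginary axis for some $m\in\NN$, then choose the second generator $z$ in the standard fundamental strip $\RE(z)>0$, $\IM(z)\in[0,l/m)$ (closed at one end to handle the boundary identification cleanly), yielding $B_z^{l/m}=z\ZZ+(l/m)i\ZZ$; I should check that distinct such $(z,m)$ give distinct groups and that every $\ZZ^2$-subgroup containing $li$ arises this way. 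For the $\ZZ\times\RR$ case: the subgroup is (closed line)$\,+\,$(discrete $\ZZ$ transverse to it); if the line is the imaginary axis we get $C_x=x\ZZ+i\RR$ with $x>0$, and if the line is some other direction $(1+it)\RR$ then the transverse $\ZZ$-part, to keep $li$ in the group, must be generated by $(l/m)i$ for some $m$, giving $D_t^{l/m}=(l/m)i\ZZ+(1+it)\RR$.

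The step I expect to be the main obstacle is not the existence half (which follows directly once the isomorphism type is fixed) but the \emph{normalization / uniqueness} bookkeeping: making sure the stated parameter ranges ($\RE(z)>0$, $\IM(z)\in[0,l/m]$ for $B$; $x>0$ vs.\ $x=\infty,0$ degeneracies; $t\in\RR$ for $D$) exhaust all possibilities without double-counting, and in particular identifying exactly which entries of the list are limits of others (e.g.\ $i\RR$ appearing both as $A^0$ and as ``$C_\infty$''). Concretely, for $B_z^{l/m}$ I would observe that given a lattice $\Lambda\ni li$, the sublattice $\Lambda\cap i\RR$ is $(l/m)i\ZZ$ for a unique $m$, and then $\Lambda/(l/m)i\ZZ\cong\ZZ$ is generated by the image of some $z$; lifting $z$ and adding integer multiples of $(l/m)i$ lets me force $\IM(z)\in[0,l/m)$, and replacing $z$ by $-z$ lets me force $\RE(z)>0$ (the case $\RE(z)=0$ cannot occur since then $\Lambda$ would not have rank $2$). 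The remaining subtlety is the half-open versus closed interval at the endpoints $\IM(z)\in\{0,l/m\}$, where $z$ and $z+(l/m)i$ (or $z$ and its reflection) can name the same lattice; I would simply note the redundancy and pick one representative, and similarly note that $B_z^{l/m}$ degenerates to $C_x$ in the limit $m\to\infty$ with $\RE(z)\to x$, and to $D_t^{l/m}$ is a separate (not a limit) family because its line factor is one-dimensional while $B$'s generators are both discrete. Once the six types are matched to the six bullet families, the lemma is proved.
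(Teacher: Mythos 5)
Your proposal is correct and follows essentially the same route as the paper: both arguments invoke the structure theorem for closed subgroups of $\CC$ and then, for each isomorphism type, pin down which concrete subgroups contain $li$ (noting that the maximal $m$ with $(l/m)i\in\Gamma$ governs the discrete direction, and splitting the $\ZZ\times\RR$ case according to whether the line factor is $i\RR$ or transverse to it). Your extra care with the normalization of $z$ for the lattice case and the endpoint redundancy in $\IM(z)$ is a welcome addition but does not change the substance of the argument.
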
 
\begin{proof}
Let $\Gamma$ be a closed subgroup of $\CC$ containing $il$, i.e. $\Gamma\in \WW(l)$. Then $\Gamma$ contains $A^l$.
Therefore if $\Gamma$ is discrete, it must contain $ A^{l/m}$ for some maximal $m$;
if $\Gamma$ is isomorphic to $\RR$, it must be $C_\infty=i\RR$.

%Then $\Gamma$ is isomorphic to either $\RR$, $\RR \times \ZZ$, or $\RR^2$. 
Also, if $\Gamma$ is a lattice containing $ A^{l/m}$ for a maximal $m$, it must be of the form $B_z^{l/m}$ for $z$ verifying $\RE(z) >0$ and $\IM(z) \in [0,l/m]$.

Thus, suppose $\Gamma$ is isomorphic to $\RR \times \ZZ$. There are two cases. 

Case 1: $A^{l/m}$ is the $\ZZ$ part. Then $\Gamma$, as a set, is the union of parallel lines of finite slope $t$ passing through the points of $A^{l/m}$. Thus $\Gamma$ is $D_t^{l/m}$. 

Case 2: $A^{l/m}$ is contained in the $\RR$ part. Then $\Gamma$ contains $C_{\infty}$ and is the union of vertical lines equally spaced in the horizontal direction. 
Hence $\Gamma = C_x$ for some $x>0$. 

Otherwise, $\Gamma=\CC$. 
\end{proof}

\begin{rmk}
\label{rmk:translationtable}
Let $G$ be a non-trivial non-parabolic closed abelian subgroup of $\PC$; let $\Xi_n$ be the multiplicative group of rotation/complex translation quantities of elements of $G$, and let $\Gamma=R\Log\Xi$ (see beginning of Section \ref{sec:redlem} for notations).
Exactly one of the following holds, with $l=2\pi R$ throughout:
\begin{itemize} 
 \item $\Gamma=A^{l/m}$ if $G$ is generated by an elliptic element of order $m$,
 \item $\Gamma$ is some $B_z^{l/m}$ if $G$ is generated by an elliptic element of order $m$ and a non-trivial hyperbolic element;
these two generators need to have the same fixed points in $\CP1$ in order for $G$ to be abelian. 
 \item $\Gamma$ is some $C_x$ if $G$ contains every elliptic element fixing the same two points in $\CP1$, 
and $G$ contains a non-trivial hyperbolic element fixing these same two points.
 \item $\Gamma$ is some $D_{t}^{l/m}$ if $G$ contains exactly $m$ elliptic elements, and has exactly $m$ connected components homeomorphic to $\RR$. Otherwise put, $\Xi$ is a $m$-branched logarithmic spiral.
 \item $\Gamma= i \RR$ if $G$ consists of every elliptic elements fixing the same fixed points, 
 \item $\Gamma= \CC$ if $G$ contains every elliptic and hyperbolic elements fixing the same fixed points. 
\end{itemize}  

\end{rmk}

%To answer Question \ref{ques:q2}, we will need the following notation:
%We need more notations to denote the groups we can get as geometric limits of the groups in $\WW(l)$. 
%\begin{itemize} 
%\item $A^{\infty} := 0$, 
%\item $B_z^{\infty} :=  z\ZZ $ for $z \in \CC$ with $\RE(z) >0$ and $\IM(z) \ge 0$, 
%\item $D_{t}^\infty := (1+i t)\RR$ with $t \in \RR$,
%\end{itemize}  

% In the next section, we will see how a non-discrete element of $\WW(l)$ could be obtained as the limit of discrete elements. 
% Many cases are essentially same as the case of closed subgroups of $\RR$ in the Chabauty topology (See the Section 2.2 in BC1).

%\subsection{ $\WW(2\pi)$ } 
%\label{ww2pi} 

%When we study the case when $l$ is fixed, it is rather natural to choose $l = 2\pi$, which allows us going 
%back and forth between the cylinder $\CC / 2\pi \ZZ$ and $\CC$ via the exponential map and logarithm. But  
%one should note that all the statements in this section holds if we replace $2 \pi$ by any $l >0$ 

Lemmas \ref{lem:chabeasy} and \ref{r2limits} below answer Question \ref{ques:q1}, with $l=2\pi$.

\begin{lem}
\label{lem:chabeasy}
 In the Chabauty topology, we have the following convergence results. 
 \begin{itemize}
 \item   $A^{2\pi/m_n} \to
 \begin{cases}
  A^0 = i \RR \text{ if } m_n \to \infty \\
  A^{2\pi/m} \text{ if } m_n \to m \in \NN 
 \end{cases}$
 \item   $ B_{z_n}^{2 \pi /m_n} \to
 \begin{cases}
  C_x \text{ if } m_n \to \infty \text{ and } \RE(z_n)\to x\\
  A^{2\pi/m} \text{ if } m_n \to m \in \NN \text{ and } \RE(z_n)\to \infty\\
  B_z^{2\pi/m} \text{ if } m_n \to m \in \NN \text{ and } z_n\to z \text{ with }\RE(z)\in (0,\infty) \\
 \end{cases}$
 \item  $C_{x_n} \to
 \begin{cases}
  C_0 =\CC \text{ if } x_n \to 0 \\
  C_\infty = i \RR \text{ if } x_n \to \infty \\
  C_x \text{ if } x_n \to x \in (0, \infty)
  \end{cases}$
 \item  $D_{t_n}^{2\pi / m_n} \to
 \begin{cases}
  \CC \text{ if } m_n \to \infty \text{ or } t_n \to \pm \infty \\
  D_t^{2\pi / m} \text{ if } t_n \to t \in \RR  \text{ and } m_n \to m \in \NN
  \end{cases}$
 \end{itemize}  
\end{lem}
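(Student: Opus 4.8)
\textbf{Proof plan for Lemma \ref{lem:chabeasy}.}

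The plan is to verify each of the listed convergence statements directly from the definition of the Chabauty topology on $\CCC(\CC)$, using the Hausdorff-distance characterization on the one-point compactification $\overline{\CC}$. Concretely, for a sequence of closed subgroups $\Gamma_n$ and a candidate limit $\Gamma$, I would check the two standard conditions: (a) every point of $\Gamma$ is a limit of points $x_n\in\Gamma_n$ (so $\Gamma$ is contained in the lower/inner limit), and (b) every convergent subsequence $x_{n_k}\in\Gamma_{n_k}$ has its limit in $\Gamma$ (so the upper limit is contained in $\Gamma$). Since each of the named families $A,B,C,D$ is described by a short explicit list of generators, both conditions reduce to elementary estimates about how those generators move as the parameters $m_n,z_n,x_n,t_n$ vary, plus the remark that $\CCC(\CC)$ is compact (so subsequential limits always exist and it suffices to identify them).

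The steps, in order: First I would dispatch the $A^{2\pi/m_n}$ case — here $A^{2\pi/m_n}=(2\pi/m_n)i\ZZ$, so when $m_n\to\infty$ the spacing $2\pi/m_n\to0$ and the subgroups fill up $i\RR=A^0$, while for $m_n\to m$ eventually $m_n=m$ and the sequence is eventually constant. Second, the $C_{x_n}=x_n\ZZ+i\RR$ case is equally direct: the horizontal spacing $x_n$ controls everything, giving $\CC$ as $x_n\to0$, $i\RR$ as $x_n\to\infty$, and $C_x$ for $x_n\to x\in(0,\infty)$. Third, for $B_{z_n}^{2\pi/m_n}=z_n\ZZ+(2\pi/m_n)i\ZZ$ I would split on whether $m_n$ stays bounded: if $m_n\to m$, the vertical lattice part stabilizes and the behaviour is governed by $z_n$ (converging to $B_z^{2\pi/m}$ if $z_n\to z$ with $\RE(z)>0$, degenerating to $A^{2\pi/m}$ if $\RE(z_n)\to\infty$ since the $z_n\ZZ$ direction escapes to $\infty$); if $m_n\to\infty$, the vertical part fills $i\RR$ and only the real part $\RE(z_n)\to x$ of $z_n$ survives, yielding $C_x$. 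Fourth, for $D_{t_n}^{2\pi/m_n}=(2\pi/m_n)i\ZZ+(1+it_n)\RR$: when $t_n\to t$ and $m_n\to m$ the subgroup is eventually constant (once $m_n=m$) or at least converges to $D_t^{2\pi/m}$; when $m_n\to\infty$ the translates $(2\pi/m_n)i\ZZ$ of the line $(1+it)\RR$ become dense, filling $\CC$; and when $t_n\to\pm\infty$, the line $(1+it_n)\RR$ becomes nearly vertical while still being translated vertically by a fixed amount, and one checks the union of such translates becomes dense in $\CC$ as well (this is where a small computation — that the vertical gaps between consecutive sheets, measured along the line direction, do not prevent density — is needed).

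The main obstacle I anticipate is the last sub-case, $D_{t_n}^{2\pi/m}\to\CC$ as $t_n\to\pm\infty$ with $m$ fixed. The other cases are ``rigid'' in the sense that a discrete parameter eventually stabilizes or a spacing tends monotonically to $0$ or $\infty$; here instead the slope of the line tends to infinity while the vertical lattice spacing $2\pi/m$ stays fixed, so one must argue that the increasingly steep parallel lines, spaced vertically by $2\pi/m$, nonetheless have union dense in the plane. The clean way to see this: $D_{t_n}^{2\pi/m}$ contains the points $k(1+it_n)+(2\pi j/m)i$ for $k,j\in\ZZ$; fixing a target point $w$ and taking $k$ near $\RE(w)$ forces the imaginary part to lie within $\tfrac12$ of $k t_n$, and as $t_n\to\infty$ the residues of $\{k t_n \bmod 2\pi/m : |k-\RE(w)|\le 1\}$ become fine enough (one needs $t_n$ irrational multiples of $2\pi/m$, or simply notes that even the worst gap shrinks because many values of $k$ are available within a window whose length one can let grow slowly) to approximate $\IM(w)$ modulo $2\pi/m$, and then a suitable $j$ finishes it. I would present this density argument carefully and treat every remaining case as a routine application of conditions (a)–(b) above, citing compactness of $\CCC(\CC)$ to guarantee that no other subsequential limits can occur.
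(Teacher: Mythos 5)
Your overall strategy — direct verification of Hausdorff convergence on $\overline{\CC}$, checking the lower and upper limit conditions case by case from the explicit generators — is exactly what the paper intends: its own ``proof'' consists of deferring all of these assertions to the reader as ``easier than or similar to'' Lemma \ref{r2limits}, whose method is precisely a containment-plus-$\epsilon$-filling estimate. The $A$, $C$, and bounded-parameter $B$ and $D$ cases are handled correctly in your plan.

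However, the one subcase you single out as the main obstacle, $D_{t_n}^{2\pi/m}\to\CC$ as $t_n\to\pm\infty$, is where your proposed argument actually breaks. You restrict attention to the sublattice $\{k(1+it_n)+(2\pi j/m)i:\ k,j\in\ZZ\}$ and then try to run an equidistribution argument on the residues $kt_n \bmod 2\pi/m$. This cannot work in general: if $t_n=2\pi N_n/m$ with $N_n\in\ZZ$, that sublattice equals $\ZZ+(2\pi/m)i\ZZ$ for every $n$, which is a fixed discrete lattice and certainly does not converge to $\CC$; neither the ``irrational multiples'' hedge nor a slowly growing window of $k$'s repairs this, since enlarging the window destroys the approximation of $\RE(w)$. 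The point you are missing is that $D_{t}^{2\pi/m}=(2\pi/m)i\ZZ+(1+it)\RR$ contains entire lines, not just their integer points: it is the union of parallel lines with direction $(1,t)$ passing through $(2\pi j/m)i$, and the perpendicular distance between consecutive lines is $\dfrac{2\pi/m}{\sqrt{1+t^2}}$. Hence every point of $\CC$ lies within $\dfrac{\pi/m}{\sqrt{1+t_n^2}}$ of $D_{t_n}^{2\pi/m}$, which tends to $0$ as $t_n\to\pm\infty$; the upper-limit condition is vacuous since the candidate limit is all of $\CC$. The same one-line spacing estimate, with $\dfrac{\pi/m_n}{\sqrt{1+t_n^2}}$, simultaneously disposes of the $m_n\to\infty$ alternative, so no arithmetic of residues is needed anywhere in the $D$ item. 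With that substitution your plan is complete and correct.
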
 
\begin{proof} 
The proofs of these assertions are either easier than or similar to the proof of Lemma \ref{r2limits} below; they are therefore left to the reader to check.
%This is not any harder than analyzing the Chabauty space of $\RR$, possibly except the limt of $D_{t_n}^{2\pi / m_n}$ when $t_n \to \pm \infty$. This case is also not hard and one will find all necessary ideas in the proof of the next lemma. So we left to the interested reader to check. 
\end{proof} 

\begin{lem}
\label{r2limits}  
Let $(z_n=x_n+ 2i\pi\theta_n)$ be a converging sequence of complex numbers, with $x_n>0$, $x_n\to 0$, $\theta_n\in [0,1]$, $\theta_n\to \theta$. 
If $\theta$ is rationnal, say $\theta=p/q$ with $p$, $q$ coprime positive integers, 
define for all $n$ $t_n$ to be the slope of the line passing through 0 and $q z_n -2i\pi p$, i.e.
\[
t_n=\dfrac{2\pi}{x_n}(\theta_n-\theta ).
\]
Then the limit in the Chabauty topology of the sequence $ B_{z_n}^{2 \pi /m} $ is 
\[
 \begin{cases}
  D_t^{2 \pi / \lcm(m,q)} \text{ if } t_n\to t \in \RR \\
  \CC \text{ if } t_n\to \pm \infty 
 \end{cases}
\]

If $\theta$ is irrationnal then $ B_{z_n}^{2 \pi /m} \to \CC$.
\end{lem}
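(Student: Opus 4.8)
The plan is to work directly with the Hausdorff-distance characterization of Chabauty convergence: a sequence $\Gamma_n \to \Gamma$ if and only if (i) every $w \in \Gamma$ is a limit of points $w_n \in \Gamma_n$, and (ii) whenever $w_{n_k} \in \Gamma_{n_k}$ converges, the limit lies in $\Gamma$. I would begin by writing out explicitly the elements of $B_{z_n}^{2\pi/m} = z_n\ZZ + (2\pi/m) i\ZZ$, namely $a z_n + (2\pi/m) i b$ for $a,b \in \ZZ$, and noting that since $z_n = x_n + 2i\pi\theta_n$ with $x_n \to 0$, the horizontal coordinate of such an element is $a x_n$, which is small unless $|a|$ is large. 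So the interesting elements are those with $a$ of order $1/x_n$.

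For the rational case $\theta = p/q$, the key algebraic observation is the one the statement hands us: $q z_n - 2i\pi p = q x_n + 2i\pi(q\theta_n - p) = q x_n(1 + i t_n)$ where $t_n = \tfrac{2\pi}{x_n}(\theta_n - \theta)$. Thus $B_{z_n}^{2\pi/m}$ contains the vector $q x_n(1+it_n)$, together with $(2\pi/m) i$. I would argue that, because $\gcd(p,q)=1$, the subgroup generated by $z_n$ and $(2\pi/m)i$ is the same as the subgroup generated by $q z_n - 2i\pi p = q x_n(1+it_n)$ and $(\tfrac{2\pi}{\lcm(m,q)}) i$ — one inclusion is clear; for the other, use a Bézout combination $a p + b' q$-type identity to recover $z_n$ modulo the vertical lattice $\tfrac{2\pi}{q} i \ZZ$, and note $\tfrac{2\pi}{m}i\ZZ + \tfrac{2\pi}{q}i\ZZ = \tfrac{2\pi}{\lcm(m,q)} i\ZZ$. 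Granting this rewriting, $B_{z_n}^{2\pi/m}$ is \emph{literally} a group of the form ``(small horizontal-ish vector with slope $t_n$) $+$ (fixed vertical spacing $\tfrac{2\pi}{\lcm(m,q)}$)''. If $t_n \to t \in \RR$, then as the horizontal spacing $q x_n \to 0$ the translates of the slope-$t$ line through $\tfrac{2\pi}{\lcm(m,q)}i\ZZ$ fill in, and one checks conditions (i) and (ii) to conclude the limit is $D_t^{2\pi/\lcm(m,q)}$; if $|t_n| \to \infty$, the generating vector $q x_n(1+it_n)$ has modulus $q x_n \sqrt{1+t_n^2}$ and argument tending to $\pm\pi/2$, so one must check it still produces points dense in all of $\CC$ — here one combines it with $(2\pi/\lcm(m,q))i$ and uses that $q x_n t_n = 2\pi(q\theta_n - p) = 2\pi(\theta_n - \theta)q \to 0$ while the argument degenerates, forcing the lattice to become dense, giving $\CC$. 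This last sub-case is the main obstacle: one has to be careful because $q x_n(1+it_n)$ could have modulus going to $0$, a nonzero constant, or $\infty$ depending on the rate, and in each situation argue density of $q x_n(1+it_n)\ZZ + \tfrac{2\pi}{\lcm(m,q)}i\ZZ$ separately (when $q x_n t_n \to 0$ but $t_n \to \infty$, the vector becomes nearly vertical with horizontal part $q x_n \to 0$, so the horizontal projections $a q x_n$ are dense in $\RR$ while the near-vertical direction together with the existing vertical vector fills the vertical direction).

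For the irrational case, I would invoke the standard equidistribution fact: since $\theta \notin \QQ$, the subgroup $\ZZ\theta + \ZZ$ is dense in $\RR$, equivalently $z_\infty \ZZ + 2i\pi\ZZ$ is dense in $2i\pi\RR$ in the limit, but more to the point, for the $n$-th group, the horizontal coordinates $\{a x_n\}$ range over $x_n\ZZ$ (dense-ish as $x_n \to 0$) and the vertical coordinates are $2\pi(a\theta_n) + \tfrac{2\pi}{m}b \pmod{}$ nothing — they are genuinely $2\pi a\theta_n + \tfrac{2\pi}{m}b$, which for $\theta_n$ near an irrational $\theta$ and $a$ ranging over a long interval become dense in $\RR$. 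Combining: given any target $w = u + iv$, I can first pick $a$ with $a x_n$ close to $u$ (many choices, an arithmetic progression with gap $x_n$), and among those choices — which form a coset structure — adjust $b$ and fine-tune $a$ using the near-irrationality of $\theta_n$ to land $2\pi a\theta_n + \tfrac{2\pi}{m} b$ close to $v$; a three-distance / pigeonhole argument on $\{a\theta_n\}$ over the relevant range of $a$ makes this rigorous. Condition (ii) is automatic since the ambient space is $\CC$ and no constraint survives, so $B_{z_n}^{2\pi/m} \to \CC$. Throughout, I would lean on Lemma \ref{lem:subCl} to recognize the limiting objects as the named groups $D_t^{\cdot}$ and $\CC$, and on the general principle that a Hausdorff limit of subgroups is a subgroup, so it suffices to produce the claimed set as the limit.
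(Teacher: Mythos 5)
There is a genuine gap in the rational case: the claimed rewriting of the group is false. You assert that $B_{z_n}^{2\pi/m}=z_n\ZZ+\tfrac{2\pi}{m}i\ZZ$ equals the lattice generated by $qz_n-2i\pi p$ and $\tfrac{2\pi}{\lcm(m,q)}i$, and the rest of your argument for this case rests on that identity. Neither inclusion holds in general. The purely imaginary elements of $B_{z_n}^{2\pi/m}$ are exactly $\tfrac{2\pi}{m}i\ZZ$ (an element $az_n+\tfrac{2\pi}{m}ib$ has real part $ax_n$, which vanishes only for $a=0$), so $\tfrac{2\pi}{\lcm(m,q)}i\notin B_{z_n}^{2\pi/m}$ whenever $q\nmid m$; no B\'ezout manipulation can produce it, since B\'ezout combinations of $p$ and $q$ only recover $2i\pi\ZZ$, not $\tfrac{2\pi}{q}i\ZZ$. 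Conversely, the real parts of elements of your proposed lattice are multiples of $qx_n$, so it does not contain $z_n$ itself when $q>1$. Concretely, for $m=1$, $\theta=1/2$, $p=1$, $q=2$: the point $\pi i$ lies in your lattice but not in $B_{z_n}^{2\pi}$, and $z_n=x_n+2i\pi\theta_n$ lies in $B_{z_n}^{2\pi}$ but not in your lattice. So $B_{z_n}^{2\pi/m}$ is \emph{not} ``literally a group of the form (small vector of slope $t_n$) plus (vertical spacing $2\pi/\lcm(m,q)$)''.

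The correct statement — and the one the paper's proof uses — is weaker but sufficient: $B_{z_n}^{2\pi/m}$ is \emph{contained in} the union of lines $D_{t_n}^{2\pi/\lcm(m,q)}$, it meets every line of that union, and the points it places on each line have horizontal spacing tending to $0$ (namely $\tfrac{q}{\gcd(m,q)}x_n$). Indeed, the $i\RR$-intercept of the slope-$t_n$ line through $az_n+\tfrac{2\pi}{m}ib$ is $2\pi\bigl(\tfrac{ap}{q}+\tfrac{b}{m}\bigr)$, and $\{apm+bq:a,b\in\ZZ\}=\gcd(pm,q)\ZZ=\gcd(m,q)\ZZ$, which yields exactly the intercepts $\tfrac{2\pi}{\lcm(m,q)}\ZZ$. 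From this one concludes $d_H\bigl(B_{z_n}^{2\pi/m},D_{t_n}^{2\pi/\lcm(m,q)}\bigr)\to 0$ and finishes by taking the limit of $D_{t_n}^{2\pi/\lcm(m,q)}$ as $t_n\to t$ or $t_n\to\pm\infty$. Your conclusion is therefore salvageable (your lattice and $B_{z_n}^{2\pi/m}$ do have the same Chabauty limit), but the step asserting their equality, on which your proof hinges, must be replaced by this containment-and-filling argument. Your treatment of the irrational case is in the right spirit (the paper dismisses it as immediate), though the uniformity in $n$ of the equidistribution of $a\theta_n$ over the range $|a|\lesssim 1/x_n$ deserves an explicit word.
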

\begin{proof}
The case where $\theta$ is irrationnal is immediate; let us suppose that $\theta\in \QQ$.

Note, just by drawing all lines of slope $t_n$ passing through points of $B_{z_n}^{2 \pi /m}$, that $B_{z_n}^{2 \pi /m} \subset D_{t_n}^{2 \pi / \lcm(m,q)} $.
Moreover, a closer look on the intersection between all those lines and the imaginary axis $i \RR$ shows that on every line of $D_{t_n}^{2\pi / \frac{mq}{\gcd(pm,q)}}$ there is actually at least one point of $B_{z_n}^{2 \pi /m}$, as soon as $\theta_n$ is close enough to $\theta$.
Since $p$ and $q$ are coprime, $\frac{mq}{\gcd(pm,q)}=\lcm(m,q)$, thus there is at least one point of $B_{z_n}^{2\pi/m}$ on each line of $D_{t_n}^{2\pi / \lcm(m,q)}$.

Finally, since $x_n\to 0$, we can find for every $\epsilon>0$ an integer $N$ large enough so that for all $n\geq N$, $B_{z_n}^{2 \pi/m}$ $\epsilon$-fills $D_{t_n}^{2\pi / \lcm(m,q)}$
(i.e. every point of $D_{t_n}^{2\pi / \lcm(m,q)}$ is at distance at most $\epsilon$ of a point of $B_{z_n}^{2 \pi/m}$).
Therefore the Hausdorff distance between $B_{z_n}^{2 \pi/m}$ and $D_{t_n}^{2\pi / \lcm(m,q)}$ tends to zero, and we are done.
\end{proof}

\subsection{Pictures for the Chabauty space of $\CC^*$}
\label{subsec:pict}

%I will change the notations in the section at this moment. This is for describing the global picture of the Chabauty space of $\CC^*$ and not sure we need all of these. We will see when we give a description on this space. 

Let us interpret the results of Subsection \ref{subsec:wwl} geometrically.

First, let us describe the space of subgroups $ D_t^{2\pi/m}$, for $m \in \NN$ and $t \in [-\infty, \infty]$.
By Lemma \ref{lem:chabeasy}, $D_t^{2\pi/m} \to \CC$ for any $m$ if $t \to \pm \infty$.
Thus we get a bouquet of circles; one circle for each $m \in \NN$, the wedge point corresponding to the total subgroup $\CC$.
We also know that when $m \to \infty$, $D_t^{2\pi/m} \to \CC$ for any $t$.
Thus when we increase $m$, the corresponding circle in the bouquet shrinks down to the wedge point.
We call this space the $D$-bouquet. See Figure \ref{fig:Dbouquet} below.

\begin{figure}[ht]
\begin{center}
\includegraphics[scale=0.6]{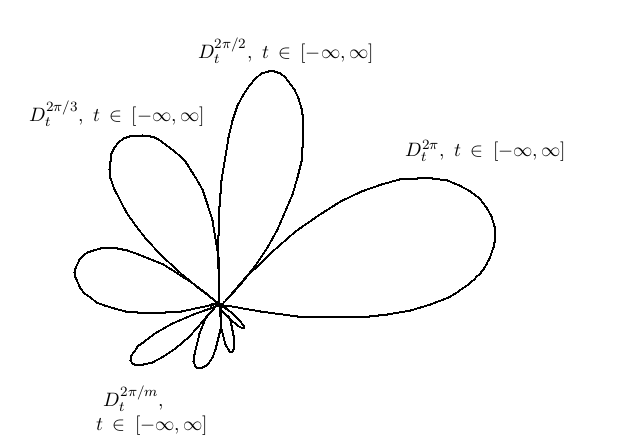}
\end{center}
\caption[$D$-bouquet in the Chabauty space of $\CC^*$]{The $D$-bouquet, a bouquet of circles; the wedge point represents the total subgroup $\CC$.}
\label{fig:Dbouquet}
\end{figure}

Now set $m \in \NN$ to be fixed.
We would like to see how the closure of the space of subgroups $B_z^{2\pi/m}$ looks like, for $z$ verifying $\RE(z)>0$ and $\IM(z)\in [0,2\pi/m]$.
%Note that this is same as the space 
%$$ \{B_{m,X} | X = (x,y) \in \RR^2 \mbox{ with } x \in [0, \infty[, y \in [0, 2\pi[   \}, $$
%since you have $0 \times 2\pi \ZZ$ as a subgroup already. 

But since two subgroups $B_z^{2\pi/m}$, $B_{z'}^{2\pi/m}$ for $z,z'$ as above, are the same if and only if $z=z' \mod 2i\pi/m $,
the space of $B_z^{2\pi/m}$'s is the cylinder 
\[
\{z=x+iy ;\; x >0 ,\, y \in [0, 2\pi/m]  \} \Big/ (x \sim x+ 2i\pi/m).
\]
 By Lemma \ref{lem:chabeasy}, if $x \to \infty$, then $B_z^{2\pi/m} \to A_m$ in the Chabauty topology.
 Therefore the space becomes a cone in the right direction.
 The identification of the other end is more complicated.
 Say $x \to 0$ and $y \to 2\pi \theta$ with $\theta\in [0,2\pi/m]$.
 By Lemma \ref{r2limits}, there are two cases to consider. 

Case 1: $\theta$ is irrational. Then $B_z^{2\pi/m}$ converges to $\CC$. 

Case 2: $\theta$ is rational, say $\theta=p/q$. Then $B_z^{2\pi/m} \to D_t^{2\pi/\lcm(m ,q)}$ where $t=\lim \frac{2\pi}{x_n}(\theta_n-\theta )$. 
For $p$ and $q$ fixed, every possible limit for $t$ is possible; hence we have to blow up the point $ 0+ 2i\pi p/q$ at the left of the cylinder to a segment corresponding to $D_t^{2\pi/\lcm(m ,q)}$ with $t \in [-\infty,\infty]$.
Now since $D_{\pm\infty}^{2\pi/\lcm(m ,q)}=\CC$, we still have to pinch the endpoints of that segment to a point, as in Figure \ref{fig:pinching} below.

\begin{figure}[ht]
\begin{center}
\includegraphics[scale=0.4]{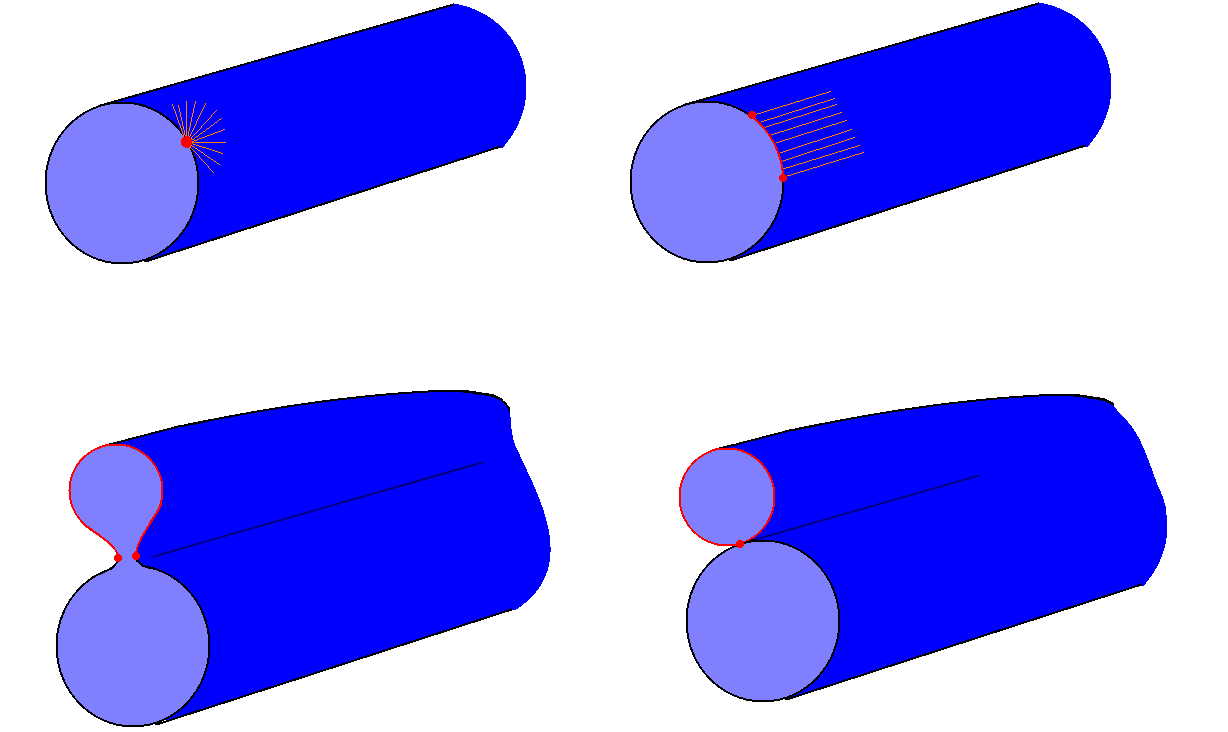}
\end{center}
\caption[Pinching in the $D$-bouquet]{At the left of the cylinder, each point $2i\pi\theta$ with $\theta$ rational is blown-up to a circle, resulting from a pinching.
First step: $2i\pi\theta$ (in red in the first picture) is blown-up to a segment (in red in the second picture). In orange, each ray represents a locus of constant slope $t_n$ (see Lemma \ref{r2limits} for notations). Second step: force the end-points of this segment to get closer and closer together (third picture) until the pinching (last picture). 
}
\label{fig:pinching}
\end{figure}

Therefore the left end of the cylinder, where $x = 0$, is glued to $D$-bouquet, in such a way that all points $2i\pi \theta$ with $\theta$ irrational are collapsed to the wedge point of the bouquet, and the other points are blown up to some circles of the bouquet.

Note that $\theta_1=p_1/q_1 $ and $\theta_2=p_2/q_2 $ are blown up to the \textit{same circle}, as long as $\lcm(m, q_1) = \lcm(m, q_2)$.
Thus if $m=1$, then this end is exactly the $D$-bouquet; but whenever $m > 1$, the end is glued to a proper subbouquet, containing only petals of index in $m\NN$. 

We call the resulting space the $m$th layer, noted $L_m$.

We can now collect every result of Lemmas \ref{lem:chabeasy} and \ref{r2limits} into a global picture for the Chabauty space of $\CC^*$, $\WW(2\pi)=\CCC(\CC^*)$ (see Figure \ref{fig:mlayer2}).

\begin{figure}[ht]
\begin{center}
\includegraphics[scale=0.3]{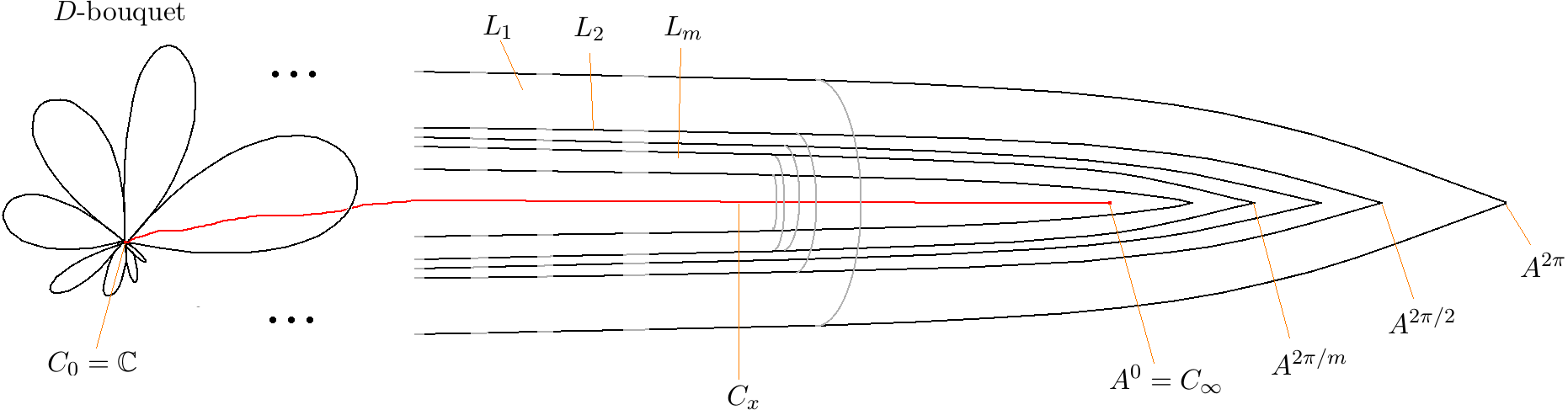}
\end{center}
\caption{The Chabauty space of $\CC^*$}
\label{fig:mlayer2}
\end{figure}

\pagebreak

\section{Case $R_\infty<\infty$: how the non-parabolic subgroups accumulate onto $\PS_2$}
\label{sec:bcstar}

In the previous section, we studied the Chabauty space of $\CC^*$, $\CCC(\CC^*)=\WW(2\pi)$ (see Definition \ref{defn:wwl});
along with Theorem \ref{thm:reducingargument2}, this describes the geometric limits of non-trivial non-parabolic closed abelian subgroups of $\PC$,
in the case where the limit is non-parabolic; using the notation developped in Section \ref{redlem}, this means $R_\infty <\infty$.

Let us now study the case where the limit is parabolic, i.e. $R_\infty=\infty$.
In view of Theorem \ref{thm:reducingargument1}, we only need to describe the limits of sequences of closed subgroups $\Gamma_n$ of $\CC$ such that $\Gamma_n\in \WW(2\pi R_n)$ for all $n$, with $R_n\to \infty$.
This was the content of Question \ref{ques:q2}.

% $\bf{R}_\infty < \infty$ by considering $\WW(l)$ with $l$ finite. We turn to our attention to the case $\bf{R}_\infty = \infty$, which corresponds to the situation $\WW(l)$ with $l$ goes to $\infty$.  

Geometrically, $R_n\to \infty$ is classically viewed with an invariant cone getting wider and wider.
This view is due to Jorgensen, and is very well-explained in the lecture note of Jeff Brock (p.2 of \cite{Brock}).
Alternatively, we explained in Subsection \ref{subsec:cylinders} how to see it with cylinders $\CC / 2 i\pi R_n  \ZZ$ accumulating onto a plane.

\subsection{Some easy cases}
Let $\Gamma_n$ be a sequence of closed subgroups of $\CC$ as above, i.e. such that $\Gamma_n\in \WW(2\pi R_n)$ for all $n$.
If $i\RR\subset \Gamma_n$, define $l_n=1/n$.
Else, let $m_n$ to be the maximal integer such that $2i\pi R_n/{m_n}\in \Gamma_n$, and define $l_n=2i\pi R_n/m_n$.
Note that $\Gamma_n\in \WW(l_n)$ for all $n$;
we will always assume that the sequence $(l_n)$ converges in $[0,\infty]$, by extracting a subsequence if necessary.

Let us first deal with the case $l_n\to 0$.
For all $n$, define $p(\Gamma_n)\subset \RR$ to be the projection of $\Gamma_n$ to $\RR$ by $[z\mapsto \RE(z)]$.
It is a closed subgroup of $\RR$, with 
\begin{itemize}
 \item $p(\Gamma_n)=\{0\}$ if $\Gamma_n=A^{l_n}$ or $\Gamma=i\RR$,
 \item $p(\Gamma_n)=x\ZZ$ if $ \Gamma_n=B_z^{l_n}$ with $\RE(z)=x$ or $\Gamma=C_x$,
 \item $p(\Gamma_n)=\RR$ if $ \Gamma_n=D_t^{l_n}$ for some $t\in \RR$ or $\Gamma=\CC$.
\end{itemize}
See Lemma \ref{lem:subCl} for notations.

\begin{lem}
Let $\Gamma_n$ as above; suppose $l_n\to 0$.
Then if $p(\Gamma_n)$ converge in $\CCC(\RR)$ to a closed subgroup $K$, then $\Gamma_n$ converges in $\CCC(\CC)$ to $H+i\RR$.
\end{lem}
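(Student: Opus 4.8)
The plan is to show that $\Gamma_n \to K + i\RR$ in the Hausdorff topology of $\CC$ by exploiting the fact that $l_n \to 0$ forces each $\Gamma_n$ to become ``vertically dense'': since $\Gamma_n$ contains $l_n i\ZZ$ (or all of $i\RR$) with $l_n \to 0$, the vertical fibers of $\Gamma_n$ over each point of $p(\Gamma_n)$ are $\frac{l_n}{?}$-dense in the corresponding vertical line. Concretely, one should first observe that for every $x \in p(\Gamma_n)$, the set $\Gamma_n \cap (\{x\} + i\RR)$ is a coset of a closed subgroup of $i\RR$ that contains $l_n i\ZZ$, hence is $l_n$-dense in the whole vertical line through $x$. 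This is the key geometric input and follows directly from the classification in Lemma \ref{lem:subCl}: in every case ($A$, $B$, $C$, $D$), the fiber over a point of the projection is either a full vertical line or a translate of $l_n i \ZZ$ (or a finer subgroup). I would record this as a short sublemma.

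Next I would verify the two inclusions defining Hausdorff convergence. For the ``lower bound'' direction, given $w = x + iy \in K + i\RR$, i.e. $x \in K$, I would pick $x_n \in p(\Gamma_n)$ with $x_n \to x$ (possible since $p(\Gamma_n) \to K$), then pick a point of $\Gamma_n$ in the vertical line over $x_n$ within distance $l_n$ of $x_n + iy$; this gives a sequence in $\Gamma_n$ converging to $w$. For the ``upper bound'' direction, given a sequence $\gamma_n \in \Gamma_n$ converging to some $w = x + iy$, the real part $\RE(\gamma_n)$ lies in $p(\Gamma_n)$ and converges to $x$, so $x \in K$ (using $p(\Gamma_n) \to K$ and closedness of $K$), hence $w \in K + i\RR$. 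Together these give $\Gamma_n \to K + i\RR$.

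The main obstacle is making precise the interchange between convergence of the projections $p(\Gamma_n)$ in $\CCC(\RR)$ and convergence of the $\Gamma_n$ themselves in $\CCC(\CC)$ — in particular, handling the behavior near infinity (the one-point compactification) so that Hausdorff convergence is genuinely established and not just ``convergence on compact sets.'' One clean way to sidestep this is to invoke the Reduction Lemma (Proposition \ref{redlem}) with $X = \CC$, $Y = \RR$, and $\varphi_n = \varphi = \RE$ (a fixed continuous proper map, trivially satisfying the hypotheses), which immediately yields $\overline{p(\Gamma_n)} \to \overline{p(\Gamma)}$; but here we want the converse-flavored statement, reconstructing $\Gamma_n$'s limit from the projections. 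So instead I expect the cleanest route is a direct $\epsilon$-argument: fix $\epsilon > 0$, choose $N$ so that for $n \ge N$ both $l_n < \epsilon$ and $p(\Gamma_n)$ is within Hausdorff distance $\epsilon$ of $K$ inside a large ball, and then use the fiberwise $l_n$-density to conclude $\Gamma_n$ is within Hausdorff distance $\approx \epsilon$ of $K + i\RR$ on that ball, with the complementary region near infinity handled uniformly because both sets are unbounded in the vertical direction. The routine bookkeeping of these radii is exactly the kind of estimate the paper elsewhere leaves to the reader, so I would compress it accordingly.
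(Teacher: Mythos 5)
Your proposal is correct; the paper's own ``proof'' of this lemma is literally the sentence ``This is easy and straightforward,'' and the argument you give --- the two-sided sequential characterization of Chabauty convergence, combined with the observation that each vertical fiber of $\Gamma_n$ is a coset of a subgroup containing $l_n i\ZZ$ and hence $l_n$-dense in its vertical line --- is exactly the intended one (note the statement's $H$ is a typo for $K$). Your worry about behavior near infinity is not a real obstacle: for closed subsets of a locally compact second countable space, Chabauty convergence is equivalent to the two sequential conditions you verify, so no separate estimate near the point at infinity is needed.
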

\begin{proof}
 This is easy and straightforward.
\end{proof}

Let us now see the case $l_n\to l$ with $l\in (0,\infty)$.
\begin{lem}
Let $\Gamma_n$ as above; suppose $l_n\to l$ with $l\in (0,\infty)$.
Define $\widetilde{\Gamma}_n=\frac{l}{l_n} \Gamma_n$.
%; we can assume that $\widetilde{\Gamma}_n$ converge by extracting a subsequence if necessary.
Then $\Gamma_n$ and $\widetilde{\Gamma}_n$ have the same limit, $(\Gamma_n)$ converges.
\end{lem}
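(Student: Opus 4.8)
The plan is to reduce the statement to the case $l_n \equiv l$ already (essentially) treated, using the dilation $z \mapsto \frac{l}{l_n}z$ as the map to which we apply the Reduction Lemma. First I would observe that the scaling $\delta_n : \CC \to \CC$, $\delta_n(z) = \frac{l}{l_n}z$, is a linear isomorphism for each $n$, and that since $l_n \to l \in (0,\infty)$ the ratios $\frac{l}{l_n} \to 1$; hence $(\delta_n)$ converges to the identity map of $\CC$, uniformly on every compact subset, and the identity is certainly continuous and proper. Moreover $\delta_n^{-1}(K) = \frac{l_n}{l}K$, and since the factors $\frac{l_n}{l}$ stay in a bounded interval away from $0$ and $\infty$ for $n$ large, the union $\overline{\bigcup_{n \geq N}\delta_n^{-1}(K)}$ is contained in a fixed compact set (a dilated copy of $K$) for $N$ large, so the hypotheses of Proposition \ref{redlem} are met. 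Applying it to $F_n = \Gamma_n$, we get that $\overline{\delta_n(\Gamma_n)} = \widetilde{\Gamma}_n$ converges to $\overline{\id(\Gamma_\infty)} = \Gamma_\infty$ whenever $\Gamma_n \to \Gamma_\infty$; conversely, applying the lemma to the maps $\delta_n^{-1}$ (which also converge to the identity with the same properties) shows $\widetilde{\Gamma}_n \to \Lambda$ implies $\Gamma_n \to \Lambda$. This proves that $\Gamma_n$ and $\widetilde{\Gamma}_n$ have the same limit (one exists iff the other does, and they agree).

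For the convergence itself, the point is that $\widetilde{\Gamma}_n = \frac{l}{l_n}\Gamma_n$ all lie in $\WW(l)$: indeed $\Gamma_n \in \WW(l_n)$ means $l_n i \in \Gamma_n$, so $\frac{l}{l_n}\cdot l_n i = li \in \widetilde{\Gamma}_n$. Now $\WW(l)$ is a closed subspace of the compact metric space $\CCC(\CC)$ — the condition $li \in \Gamma$ is closed in the Chabauty topology — hence $\WW(l)$ is itself compact. By Lemma \ref{lem:subCl} every element of $\WW(l)$ is one of the explicitly listed groups $A^{l/m}, B_z^{l/m}, C_x, D_t^{l/m}, i\RR, \CC$, and Lemmas \ref{lem:chabeasy} and \ref{r2limits} (with $l$ in place of $2\pi$ throughout, the proofs being identical after rescaling) describe exactly which limits these sequences can have, and in particular show every convergent subsequence of $(\widetilde{\Gamma}_n)$ has its limit again in $\WW(l)$. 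So after passing to a subsequence we may assume $\widetilde{\Gamma}_n$ converges in $\CCC(\CC)$; but then by the previous paragraph the original sequence $\Gamma_n$ converges to the same limit. Since $\CCC(\CC)$ is a compact metric space, to conclude that the \emph{full} sequence $(\Gamma_n)$ converges it suffices to check that all subsequential limits coincide, which follows from the fact that all subsequential limits of $(\widetilde{\Gamma}_n)$ coincide — and this is where one invokes the classification in Lemmas \ref{lem:chabeasy} and \ref{r2limits} together with the hypothesis that the relevant auxiliary data ($m_n$, $\RE(z_n)$, slopes $t_n$, etc.) of $\Gamma_n$ converge, which is built into the standing assumptions of the section (``we will always assume that the sequence $(l_n)$ converges'', and similarly for the other parameters after extraction).

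The main obstacle I anticipate is bookkeeping rather than conceptual: one must check that the Reduction Lemma genuinely applies to the dilations $\delta_n$ — in particular the properness/compactness condition, which is where a non-compact family of maps could fail — and then carefully match the parameters of $\widetilde{\Gamma}_n$ to those of $\Gamma_n$ so that the convergence statements of Lemmas \ref{lem:chabeasy} and \ref{r2limits} can be quoted verbatim. A secondary subtlety is the case $\Gamma_n = i\RR$ (or more generally when $i\RR \subset \Gamma_n$), handled by the convention $l_n = 1/n$ in the surrounding text; there $l_n \to 0$, so this case does not actually arise under the hypothesis $l_n \to l \in (0,\infty)$, and one should note this explicitly. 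Once these points are dispatched the lemma follows, and — as the truncated sentence in the excerpt indicates — the limit of $\Gamma_n$ is then read off from the classification of limits in $\WW(l)$.
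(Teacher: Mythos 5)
Your proposal is correct and follows essentially the same route as the paper: the paper's proof is precisely "an easy application of Proposition \ref{redlem}" to the dilations $z\mapsto \frac{l}{l_n}z$ (which converge to the identity), after which the convergence of $(\widetilde{\Gamma}_n)\subset \WW(l)$ is read off from Lemmas \ref{lem:chabeasy} and \ref{r2limits} rescaled from $2\pi$ to $l$. Your version simply spells out the verification of the Reduction Lemma's hypotheses and the bookkeeping that the paper leaves implicit.
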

\begin{proof}
 This is an easy application of Proposition \ref{redlem}.
\end{proof}
Thus by Lemmas \ref{lem:chabeasy} and \ref{r2limits}, where the results are shown for $l=2\pi$, we are done in the case $l_n\to l$.

The following lemma deals with the easy convergence results in the case $l_n\to \infty$.
\begin{lem}
 In the Chabauty topology, we have the following convergence results, for $l_n\to \infty$ throughout.
 \begin{itemize}
 \item   $A^{l_n} \to \{0\}$
 \item   $ B_{z_n}^{l_n} \to
 \begin{cases}
  \{0\} \text{ if } \RE(z_n) \to \infty \\
  (x+iy)\ZZ \text{ if } \RR(z_n) \to x \text{ with } x>0 \text{, and } \widetilde{y_n}\to y \\
  \text{   where } \widetilde{y_n} \text{ is the representative of } \IM(z_n) \mod il_n \text{ in } (-l_n/2,l_n/2] \\
  \{0\} \text{ if } \RE(z_n) \to x \text{ with } x>0 \text{ and } \widetilde{y_n}\to \pm \infty\\
 \end{cases}$
 \item  $C_{x_n} \to C_x$ if $x_n\to x\in [0,\infty]$
 \item  $D_{t_n}^{l_n} \to 
 \begin{cases}
	  (1+it)\RR \text{ if } t_n\to t\in \RR\\
          C_x \text{ if } t_n\to \pm \infty \text{ and } \frac{|l_n|}{|t_n|}\to x\in [0,\infty]
 \end{cases}$
 \end{itemize}  \end{lem}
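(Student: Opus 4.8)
Each of the four bullet points is a Chabauty-convergence statement about a specific family of closed subgroups of $\CC$ described in Lemma \ref{lem:subCl}, so the plan is to handle them one at a time, in each case producing a candidate limit and then checking the two Hausdorff-convergence inequalities: (a) every point of the candidate limit is approximated by points of $\Gamma_n$, and (b) every convergent sequence of points $x_n\in\Gamma_n$ has its limit in the candidate. Throughout I will use that $l_n\to\infty$, which means the ``vertical spacing'' $l_n i$ in the discrete directions escapes to infinity, so that in any fixed compact set of $\CC$ only finitely many vertical copies survive.

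\textbf{The first and third bullets.} For $A^{l_n}=l_n i\ZZ$, since $l_n\to\infty$, on any ball $B(0,M)$ the only element of $A^{l_n}$ for $n$ large is $0$; hence $A^{l_n}\to\{0\}$, which is (a) and (b) together. For $C_{x_n}=x_n\ZZ+i\RR$, this family does not see $l_n$ at all and the result $C_{x_n}\to C_x$ when $x_n\to x\in[0,\infty]$ is precisely the third convergence line of Lemma \ref{lem:chabeasy} (with $C_\infty=i\RR$, $C_0=\CC$); I would simply cite that lemma, since $C_{x_n}$ depends only on $x_n$.

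\textbf{The second bullet ($B_{z_n}^{l_n}$).} Write $z_n=\RE(z_n)+i\,\IM(z_n)$ and let $\widetilde{y_n}\in(-l_n/2,l_n/2]$ be the representative of $\IM(z_n)\bmod il_n$; then $B_{z_n}^{l_n}=z_n\ZZ+l_ni\ZZ$ contains the point $\RE(z_n)+i\widetilde{y_n}$, and modulo $l_ni\ZZ$ every element of the group is congruent to an integer multiple of $\RE(z_n)+i\widetilde{y_n}$. For $n$ large, $l_ni\ZZ$ contributes nothing in $B(0,M)$, so inside $B(0,M)$ the group $B_{z_n}^{l_n}$ coincides with $\{k(\RE(z_n)+i\widetilde{y_n});\ k\in\ZZ\}\cap B(0,M)$. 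Now split into the three stated cases: if $\RE(z_n)\to\infty$ then even $k=\pm1$ leaves $B(0,M)$ for $n$ large, giving $\{0\}$; if $\RE(z_n)\to x>0$ and $\widetilde{y_n}\to y\in\RR$ then the generator converges to $x+iy$ with positive real part, so the discrete group $(\RE(z_n)+i\widetilde{y_n})\ZZ$ converges to $(x+iy)\ZZ$ (standard: a sequence of rank-one discrete subgroups whose generators converge to a nonzero complex number converges to the generated group); if $\RE(z_n)\to x>0$ but $\widetilde{y_n}\to\pm\infty$ then again only $k=0$ survives in any fixed ball, giving $\{0\}$. The continuity of the logarithmic-spiral picture / Proposition \ref{redlem} could be invoked to streamline (b), but the direct argument above is self-contained.

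\textbf{The fourth bullet ($D_{t_n}^{l_n}$).} Here $D_{t_n}^{l_n}=l_ni\ZZ+(1+it_n)\RR$ is a union of parallel lines of slope $t_n$ spaced $l_n$ apart vertically. When $t_n\to t\in\RR$: the line $(1+it_n)\RR$ converges uniformly on compacts to $(1+it)\RR$, and the other lines (offsets $\pm l_ni,\pm 2l_ni,\dots$) all leave any fixed ball since $l_n\to\infty$; hence $D_{t_n}^{l_n}\to(1+it)\RR$. When $t_n\to\pm\infty$ with $|l_n|/|t_n|\to x\in[0,\infty]$: reparametrize the line $(1+it_n)\RR$ as $\{(s,st_n):s\in\RR\}$; as the slope blows up these lines become more and more vertical, so the union approaches a union of vertical lines $i\RR+c$ where the horizontal spacing between consecutive lines is $|l_n|/\sqrt{1+t_n^2}\sim|l_n|/|t_n|\to x$. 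Thus $D_{t_n}^{l_n}\to x\ZZ+i\RR=C_x$ (with $C_0=\CC$ and $C_\infty=i\RR$ at the extremes). For the clean version of this last computation I would write $D_{t_n}^{l_n}$ as the image of $\RR^2$ under $(s,k)\mapsto s(1+it_n)+kl_ni$, note the horizontal coordinate of a point of $D_{t_n}^{l_n}$ at height $h$ is $h/t_n+$(multiples of an effectively shrinking or fixed quantity), and read off the limit.

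\textbf{Main obstacle.} The computations are routine; the one place that needs genuine care is the fourth bullet in the regime $t_n\to\pm\infty$, where one must correctly track \emph{two} competing scales — the slope $t_n$ going to infinity (which by itself would send $D_{t_n}^{l_n}$ to $i\RR$, cf. the $l=2\pi$ statement $D_t^{2\pi/m}\to\CC$ as $t\to\pm\infty$ is a \emph{different} scaling) and the vertical spacing $l_n$ also going to infinity — and see that it is exactly their ratio $|l_n|/|t_n|$ that controls the horizontal spacing of the limiting vertical-line configuration $C_x$. Getting the bookkeeping of these scales right, and in particular checking the boundary behaviours $x=0$ (limit $\CC$) and $x=\infty$ (limit $i\RR$), is where I would be most careful; everything else follows the template of Lemmas \ref{lem:chabeasy} and \ref{r2limits}, which is why the authors can reasonably say "this is easy and straightforward."
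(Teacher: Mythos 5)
The paper offers no proof of this lemma (it is ``left to the reader''), so there is nothing to compare against; on the merits, your treatment of the first, third and fourth bullets is correct, and in particular your bookkeeping of the two competing scales $t_n$ and $l_n$ in the $D_{t_n}^{l_n}$ case, with the perpendicular spacing $|l_n|/\sqrt{1+t_n^2}\sim |l_n|/|t_n|$ controlling which $C_x$ appears, is exactly the right way to see that bullet.

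There is, however, a genuine gap in the second bullet. Your intermediate claim that inside a fixed ball $B(0,M)$ the group $B_{z_n}^{l_n}$ coincides with $\bigl(\RE(z_n)+i\widetilde{y_n}\bigr)\ZZ\cap B(0,M)$ is false: a general element is $k\bigl(\RE(z_n)+i\widetilde{y_n}\bigr)+ml_ni$, and for $k\neq 0$ the quantity $k\widetilde{y_n}$ can be reduced modulo $l_n$ back into $[-M,M]$ even though $\widetilde{y_n}\to\pm\infty$. This does no harm in your first two subcases (there either $k=0$ is forced, or $k$ and $\widetilde{y_n}$ are both bounded so $m=0$ is forced for large $n$), but it breaks the third. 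Concretely, take $l_n=n$ and $z_n=1+i\,l_n/2$, so that $\RE(z_n)\equiv 1>0$ and $\widetilde{y_n}=l_n/2\to+\infty$; then $2z_n-il_n=2\in B_{z_n}^{l_n}$ for every $n$, and one checks that $B_{z_n}^{l_n}\to 2\ZZ\neq\{0\}$. So ``only $k=0$ survives in any fixed ball'' is not justified and is not even true: the limit in this regime is governed by whether $l_n\cdot\mathrm{dist}\bigl(k\widetilde{y_n}/l_n,\ZZ\bigr)\to\infty$ for each of the finitely many relevant $k\neq 0$, and the conclusion $\{0\}$ needs that as an additional hypothesis. (This is arguably a defect of the printed statement as much as of your argument, but your proof as written does not detect it, and a correct proof must either add the hypothesis or enumerate the extra limits $jx\ZZ$ that can occur.)
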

\begin{proof}
 The proofs of these assertions are elementary, and left to the reader to check.
\end{proof}

The last remaining case, namely the case $\Gamma_n= B_{z_n}^{l_n} $ with $l_n\to \infty$ and $\RE(z_n)\to 0$, will be studied separately in Subsection \ref{subsec:lastcase} below.
Note that, in view of Proposition %\ref{thm:redlem} 
\ref{redlem} and Remark \ref{rmk:translationtable}, understanding the convergence of $\Gamma_n$ in this special case is the same as understanding the convergence of sequences of  
closed abelian subgroups $G_n$ generated by an elliptic element of order $m_n$ (this condition being vacuous for $m_n=1$) and a non-trivial hyperbolic element, and such that $R_n/m_n\to \infty$.
In particular, Subsection \ref{subsec:lastcase} describes all possible geometric limits of sequences of cyclic groups of $\PC$ generated by one hyperbolic element, and converging to a parabolic group.

\subsection{Remaining case}
\label{subsec:lastcase}

We will now study the convergence of lattices $\Gamma_n \subset \CC$ of the form
\[
\Gamma_n= \langle il_n, x_n+i\theta l_n \rangle
\]
with $l_n>0$, $x_n>0$ and $\theta\in [0,1]$, in the case where $l_n\to\infty$ and $x_n\to 0$. 

Taking extractions if necessary, we can assume that $\theta_n\to \theta_\infty\in [0,1]$.
%Also, assume that $\theta_\infty\in \RR\setminus\QQ$; this will be convenient, since then the expansion of $\theta_\infty$ in a continued fraction is infinite. 

%There is no way of describing an exaustion of cases as in Subsection ?? here. 

%includes the convergence of cyclic hyperbolic subgroups of $\PC$ )

The strategy to describe the explicit limit of a sequence $(\Gamma_n)$ as above will be to replace the generators $( il_n, x_n+i\theta l_n)$ by a ``better'' pair of generators, ``better'' here meaning roughly ``closer to the origin''.
The intuition is that while $ il_n$ and $ x_n+i\theta l_n$ converge/diverge possibly in very different speeds,
linear combinations of these generators may very well end up close to the origin.
Better generators will prevent this enrichment behavior to happen.

% define for each $n$ ``better'' pairs of generators than $(il_n, x_n+i\theta l_n )$.

%\subsection{ How we study this case? } 

%The interesting case could be phrased as following: we study the limits of the lattices $\Gamma_n = <i l_n, x_n + i \theta_n l_n>$ in $\CC$ when $l_n \to \infty$, 
%$x_n \to 0$, $\theta_n \to \theta \in [0,1]$. The harder case is when $\theta$
%is irrational. Our general strategy is choosing the `good' generators of $\Gamma_n$ so that we can keep track of their behaviors even when $l_n$ is very large and $x_n$ is very small. 
%One might try the following simplest choice: pick two nonzero point in the lattice which are closest  and second closest to $0$ respectively. This will turn our to be too naive and so we will %have to 
%choose a little more wisely. And also, it is not easy to find those good generators for all $n$ in practice. 
This is where the continued fractions enter to the picture. 
Continued fractions appear in various parts of Mathematics in very interesting ways. 
The note \cite{AHTopNum} by Allen Hatcher, cited as reference, will not be used directly; 
nevertheless, we recommand it, and we believe that much insight can be gained through this geometry-flavoured exposition of continued fraction, using the Farey diagram.
Anything we need in the following is presented now.

\subsubsection{Continued fractions} 
\label{subsec:contfrac}
Recall that any number $\theta\in \RR$ can be written as
$$\theta= \alpha_0 + \frac{1}{ \alpha_1 + \frac{1}{\alpha_2 + \frac{1}{\alpha_3 + \cdots}  }   }$$
with $\alpha_0\in \ZZ$ and $\alpha_i\ge 1$ for $i\ge 1$.

Such expansions are finite for rational numbers, and infinite for irrational numbers.
We use the following more compact notation: 
$$\theta=[\alpha_0; \alpha_1, \alpha_2, \alpha_3, \ldots] $$

Recall that for $j$ less than the length of the expansion in continued fraction of $\theta$ (this condition being vacuous for $\theta$ irrational), the $j$th convergent
%$$\frac{p_0}{q_0} = \alpha_0, \frac{p_1}{q_1} = [\alpha_0;\alpha_1], \frac{p_2}{q_2} = [\alpha_0;\alpha_1,\alpha_2], \cdots, \frac{p_n}{q_n} = [\alpha_0; \alpha_1, \alpha_2, \ldots, \alpha_n], \cdots $$
\[
 \frac{p_j}{q_j} = [\alpha_0; \alpha_1, \alpha_2, \ldots, \alpha_j]
\]
with $p_j$, $q_j$ coprime, verify the following properties.

%We will see how the continued fraction comes into our picture in a great detail soon, but let's make some remarks here. We identify $S^1$ with $\RR/\ZZ$ and write any point of $S^1$ as a real number in [0, 1). Eventually this will represent the rotational part of hyperbolic generators. Then now we use the continued fraction to express those numbers. In particular, $\alpha_0$ is always $0$ in our case, so we will write using the form $[\alpha_1, \alpha_2, \alpha_3, \ldots]$. Now $p_j, q_j$ could be defined recursively. The following lemma is well-known. 

\begin{lem} 
\label{lem:easypptycontfrac}
By convention, $p_{-1} = 1$, $p_0 = \alpha_0$, $q_{-1} = 0$, $q_0 = 1$. For all $j \ge 1$, 
\begin{itemize}
\item $p_j = \alpha_j p_{j-1} + p_{j-2}$, $q_j = \alpha_j q_{j-1} + q_{j-2}$,
\item $q_{j+1}p_j - q_j p_{j+1} = (-1)^j$,
\item $p_j/q_j$ alternates around $\theta$. More precisely, $\sign(\theta-p_n/{q_n})=(-1)^n$.
\item $|\theta-p_j/q_j|<1/q_j q_{j+1} <1/q_j^2$
\end{itemize}
\end{lem}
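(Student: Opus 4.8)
The statement is the classical package of facts about continued-fraction convergents, and the plan is to derive all four bullets from one device: the convergent regarded as a function of its \emph{last} partial quotient. For a real variable $x>0$, and $j$ in the range of the expansion, set
\[
f_j(x)=[\alpha_0;\alpha_1,\ldots,\alpha_{j-1},x],
\]
so that $f_j(\alpha_j)=p_j/q_j$. First I would prove, by induction on $j$ using the elementary identity $[\alpha_0;\ldots,\alpha_{j-1},x]=[\alpha_0;\ldots,\alpha_{j-2},\,\alpha_{j-1}+1/x]$, that
\[
f_j(x)=\frac{x\,p_{j-1}+p_{j-2}}{x\,q_{j-1}+q_{j-2}},
\]
where the $p_i,q_i$ are defined by the stated initial data together with the recurrences in the first bullet. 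The base case $j=1$ is immediate from $p_0=\alpha_0,p_{-1}=1,q_0=1,q_{-1}=0$, and the inductive step is the one-line computation above after clearing the $1/x$. Evaluating at $x=\alpha_j$ then yields the recurrences $p_j=\alpha_j p_{j-1}+p_{j-2}$, $q_j=\alpha_j q_{j-1}+q_{j-2}$ of the first bullet (and, together with the determinant identity below, shows that this fraction is already in lowest terms, so it genuinely equals $p_j/q_j$).

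For the second bullet, I would plug the recurrences into $e_j:=q_{j+1}p_j-q_j p_{j+1}$ and observe the telescoping relation
\[
e_j=(\alpha_{j+1}q_j+q_{j-1})p_j-q_j(\alpha_{j+1}p_j+p_{j-1})=q_{j-1}p_j-q_j p_{j-1}=-e_{j-1}.
\]
Hence $e_j=(-1)^{j-j_0}e_{j_0}$, and evaluating at the base index pins down the constant, giving $e_j=\pm1$ of the claimed parity; in particular $\gcd(p_j,q_j)=1$, since any common divisor of $p_j,q_j$ divides $e_j$.

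For the last two bullets I would bring in the complete quotient $\beta_j=[\alpha_j;\alpha_{j+1},\alpha_{j+2},\ldots]$, which for $j\ge1$ satisfies $\beta_j\ge\alpha_j\ge1$ and $\theta=f_j(\beta_j)$. Substituting $\beta_j$ into the formula for $f_j$ and subtracting $p_{j-1}/q_{j-1}$, then clearing denominators and invoking the determinant identity, gives
\[
\theta-\frac{p_{j-1}}{q_{j-1}}=\frac{q_{j-1}p_{j-2}-q_{j-2}p_{j-1}}{q_{j-1}\bigl(\beta_j q_{j-1}+q_{j-2}\bigr)},
\]
whose numerator is $\pm1$ with sign fixed (once and for all) by the base case; since every $q_i>0$ and $\beta_j>0$, the denominator is positive, so the sign of $\theta-p_{j-1}/q_{j-1}$ alternates in $j$ and, after tracing the base case and reindexing, equals $(-1)^n$ at index $n$. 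For the size bound, as long as a further partial quotient exists one has $\beta_j>\alpha_j$, whence $\beta_j q_{j-1}+q_{j-2}>\alpha_j q_{j-1}+q_{j-2}=q_j$, so $|\theta-p_{j-1}/q_{j-1}|<1/(q_{j-1}q_j)$; reindexing gives $|\theta-p_j/q_j|<1/(q_j q_{j+1})$, and $q_{j+1}=\alpha_{j+1}q_j+q_{j-1}>q_j$ turns this into $<1/q_j^2$.

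Every computation here is routine; the only places that demand care are the sign and base-case bookkeeping in the determinant identity (getting the exponent of $-1$ exactly right) and the degenerate case of rational $\theta$, where the expansion terminates. In that case I would simply restrict $j$ to lie strictly below the length of the expansion, so that $\beta_{j+1}$, and hence $q_{j+1}$, genuinely exists; at the final convergent the approximation bound is trivial since $\theta=p_j/q_j$ exactly.
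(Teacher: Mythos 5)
Your proof is correct, and it is the standard textbook derivation; the paper itself gives no proof of this lemma at all, presenting it as a recollection of classical facts about convergents, so there is no argument in the paper to compare against. Your device of writing the convergent as a M\"obius function $f_j(x)=(x\,p_{j-1}+p_{j-2})/(x\,q_{j-1}+q_{j-2})$ of the last partial quotient, then specializing to $x=\alpha_j$ for the recurrences and to the complete quotient $\beta_j$ for the sign and size estimates, is exactly the expected route and all the steps check out. One bookkeeping remark: with the stated conventions $p_{-1}=1$, $q_{-1}=0$, the base case gives $e_{-1}=q_0p_{-1}-q_{-1}p_0=1$, so the telescoping $e_j=-e_{j-1}$ yields $q_{j+1}p_j-q_jp_{j+1}=(-1)^{j+1}$ rather than $(-1)^j$; the discrepancy lies in the paper's statement (an index-parity slip), not in your method, and it does not affect the third and fourth bullets, whose signs you trace correctly.
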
 

The following proposition is folklore, but it is usually stated in a weaker version; thus we reproved it here.
\begin{prop}
Let $\theta\in \RR$.
 The denominators $q_j$ of the convergents of $\theta$ are those integers minimizing 
\[
 a\mapsto \inf_{p\in \ZZ}|a\theta-p|
\]
amongst smaller positive integers.
More precisely, if we define a non-increasing non-negative map $f$ by
\[
 f:a\mapsto \inf_{1\le i\le a}\inf_{p\in \ZZ}|i\theta-p|
\]
and by $f(0)=\infty$, then the set $\{a\ge 1;\;f(a)<f(a-1) \}$ is precisely $\{q_j;\;j\in \NN\}$.
\end{prop}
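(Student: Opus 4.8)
The plan is to show the two inclusions between the set $S=\{a\ge 1;\;f(a)<f(a-1)\}$ and the set $Q=\{q_j;\;j\in\NN\}$ of convergent denominators. Throughout I write $\|x\|=\inf_{p\in\ZZ}|x-p|$ for the distance to the nearest integer, so that $f(a)=\min_{1\le i\le a}\|i\theta\|$ and $f$ is non-increasing with $f(0)=\infty$; note $a\in S$ exactly when $\|a\theta\|<\|i\theta\|$ for every $i$ with $1\le i\le a-1$, i.e. when $a$ is a \emph{strict record} for the function $i\mapsto\|i\theta\|$.

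First I would prove $Q\subseteq S$, i.e. that each $q_j$ is a record. The key inequality is the classical best-approximation fact: for $1\le a<q_{j+1}$ one has $\|a\theta\|\ge\|q_j\theta\|$, with equality only forced in degenerate situations, and in fact $\|a\theta\|>\|q_j\theta\|$ for $1\le a<q_j$ when $j\ge 1$. This is where I would use Lemma \ref{lem:easypptycontfrac}: from $q_{j+1}p_j-q_jp_{j+1}=(-1)^j$ and $|\theta-p_j/q_j|<1/(q_jq_{j+1})$ one gets $\|q_j\theta\|=|q_j\theta-p_j|<1/q_{j+1}$, while a short argument (writing any $a\theta-p$ with $1\le a<q_{j+1}$ in the basis given by the two vectors $(q_j,p_j)$ and $(q_{j+1},p_{j+1})$ of the integer lattice, whose determinant is $\pm1$) shows $|a\theta-p|\ge|q_{j-1}\theta-p_{j-1}|>\|q_j\theta\|$. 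Hence $q_j$ beats every smaller index, so $q_j\in S$.

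Next I would prove $S\subseteq Q$: if $a\ge 1$ is a record, then $a=q_j$ for some $j$. Given a record $a$, choose $j$ maximal with $q_j\le a$ (such $j$ exists since $q_0=1$, and is finite even for $\theta$ irrational because $q_j\to\infty$; if $\theta$ is rational we stop at the last convergent, which equals $\theta$ and makes $\|q_j\theta\|=0$, forcing $a=q_j$). If $a>q_j$, then $q_j\le a<q_{j+1}$, and by the best-approximation inequality from the previous paragraph $\|a\theta\|\ge\|q_j\theta\|$; since $q_j<a$ this contradicts $a$ being a \emph{strict} record. Therefore $a=q_j\in Q$. Combining the two inclusions gives $S=Q$, which is the assertion.

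The main obstacle is getting the strictness of the best-approximation inequality exactly right, since the folklore statement is usually phrased with $\ge$ rather than $>$ and the edge cases ($j=0$, $\alpha_1=1$, $\theta$ rational, and half-integer values of $\theta$ where $\|\theta\|$ is achieved by two integers) need care; this is precisely why the proposition says the paper reproves it rather than citing it. I would handle this by working with the signed quantities $q_j\theta-p_j$, whose signs alternate by Lemma \ref{lem:easypptycontfrac}, rather than with the absolute values directly: the lattice-basis argument then gives, for $1\le a<q_{j+1}$ and any $p$, that $a\theta-p=u(q_j\theta-p_j)+v(q_{j+1}\theta-p_{j+1})$ with integers $u,v$ not both zero and with $uv\le 0$ forced by $a<q_{j+1}$, so $|a\theta-p|\ge|u|\,|q_j\theta-p_j|\ge|q_j\theta-p_j|$ unless $u=0$, in which case $a$ is a positive multiple of $q_{j+1}$, impossible for $a<q_{j+1}$; and one upgrades $\ge$ to $>$ for $a<q_j$ by noting $|v|\ge 1$ then forces the strictly larger term $|q_{j+1}\theta-p_{j+1}|$ to appear unless $u\ne 0$ too, handled as before. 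The remaining bookkeeping is routine.
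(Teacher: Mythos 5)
Your argument is correct, and it reaches the conclusion by a genuinely different route than the paper. The paper's proof is organized around Lemma \ref{lem:easypptycontfrac2}: for a good approximation $p/q$ it identifies the \emph{two} smallest values of $a\mapsto\inf_p|a\theta-p|$ on $\{1,\dots,q\}$, the runner-up sitting at the $a_0$ determined by the congruence $a_0p\equiv-\sign(\theta-p/q)\bmod q$; combining this with $q_{j-1}p_j-q_jp_{j-1}=(-1)^{j-1}$ and the alternation of signs shows that $a_0=q_{j-1}$, so the records form a chain whose consecutive terms are exactly consecutive convergent denominators, and induction from $q_0=1$ finishes. You instead prove the two set inclusions directly via the classical best-approximation inequality, established by decomposing $(a,p)$ in the unimodular basis $(q_j,p_j),(q_{j+1},p_{j+1})$ and exploiting the sign alternation to rule out cancellation. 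The trade-offs: your lattice-basis computation is fully explicit where the paper's key lemma is only justified by ``thinking about the forward orbit of $\theta$ in $S^1$,'' so your route is arguably more self-contained; on the other hand the paper's lemma yields the extra information that the record immediately preceding $q_j$ is $q_{j-1}$ (not merely that it is \emph{some} convergent denominator), which is what lets the induction close without a separate ``no records in between'' step. Your handling of the strictness needed for $Q\subseteq S$ (upgrading $\ge$ to $>$ for $a<q_j$ via the identity $|q_{j-1}\theta-p_{j-1}|=\alpha_{j+1}|q_j\theta-p_j|+|q_{j+1}\theta-p_{j+1}|$, together with the degenerate rational and $\alpha_1=1$ cases) is the only delicate point, and you have flagged and addressed it adequately.
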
 
\begin{proof}
 Define an extraction $(b_j)$ such that $\{a\ge 1;\;f(a)<f(a-1) \}=\{b_j;\;j\in \NN\}$.
The purpose is here to prove that $(b_j)=(q_j)$ as a sequence.
Let us begin by an easy lemma. 
\begin{lem}
\label{lem:easypptycontfrac2}
 If two coprime positive integers $p$ and $q$ verify $|\theta-p/q|<1/{q^2}$, then the two lowest values of
\[
a\mapsto \inf_{p\in \ZZ}|a\theta-p|
\]
for $a\in \{1,\ldots , q\}$ are obtained for $a=q$ and for $a=a_0 \in \{1,\ldots , q-1 \}$ defined by
\[
 a_0 p = -\sign(\theta-p/q) \mod q
\]
\end{lem}
\begin{proof}
This lemma should be clear, when thinking about the forward orbit of $\theta$ in $S^1=\RR/\ZZ$.

%By hypothesis, $|q\theta-p|<1/q$.
%Let us first show that if $a p \neq \pm 1 \mod q$, $|a\theta|_{\RR/\ZZ}>1/q$.
%Indeed, in this case, we have for all $m\in \ZZ$:
%\begin{align*}
% |a \theta -m| 	&\ge |a p/q -m|-a |\theta -p/q| \\
%			&\ge 2/q - (q-1)/{q^2}>1/q
%\end{align*}
%and we are done for this case. 
%Now suppose for instance that $\sign(\theta-p/q)=1$, i.e. $\theta>p/q$. Then for $1\lea_0\le q-1$ such that
%\[
% a_0 p = -1 \mod q
%\]
%say $a_0 p = mq-1$, we have by assumption $0<a_0\theta-m+1/q<a_0/{q^2}$, so in particular $-1/q<a_0\theta-m<0$.
%Also, for $1\le a\le q-1$ such that
%\[
% a p = 1 \mod q
%\]
%we would find in a similar way $1/q<a_0\theta-m<2/q$. Thus we are done in the case $\sign(\theta-p/q)=1$.
%The case $\sign(\theta-p/q)=-1$ is entirely similar, and it is left to the reader.
\end{proof}

Now combining Lemmas \ref{lem:easypptycontfrac} and \ref{lem:easypptycontfrac2}, we conclude that each $q_j$ is in $\{b_j;\;j\in \NN\}$; moreover, the definition of $a_0$ in Lemma 
\ref{lem:easypptycontfrac2} implies that for all $j$ the largest $b_k$ such that $b_k<q_j$ must be $q_{j-1}$. Since $q_0=1$, we are done.
\end{proof}

\subsubsection{Applications} 
\label{subsec:contfracappl}
%Now let's go back to our original question about the limits of the lattices $\Gamma_n = <i l_n, x_n + i \theta_n l_n>$ in $\CC$ when $l_n \to \infty$, 
%$x_n \to 0$, $\theta_n \to \theta \in [0,1]$. 
%We will describe how we choose "good" generators. 

%\begin{lem} 
% For each $n$, there exists a unique $j$ such that $| \theta - \frac{p_{j+1}}{q_{j+1}}| < \frac{x_n}{l_n} <  | \theta - \frac{p_{j}}{q_{j}}| $. 
%\end{lem} 

Recall the previous notations
$\Gamma_n= \langle il_n, x_n+i\theta l_n \rangle$
with $l_n>0$, $x_n>0$, $\theta\in [0,1]$, $l_n\to\infty$, $x_n\to 0$ and $\theta_n\to \theta_\infty\in [0,1]$.
Let 
\[
\theta_n=[0;\alpha_{n,1},\ldots,\alpha_{n,j},\ldots]
\]
be the continued fraction expansion of $\theta_n$.

Define for all integers $n$ and $j$ 
\[
u_{n,j} =q_{n,j} x_n +i l_n(q_{n,j}\theta_n-p_{n,j})
\]

Note that we always have $\Gamma_n=\langle u_{n,j}, u_{n,j+1} \rangle$, since the imaginary part of $ u_{n,j}$ and $u_{n,j+1}$ are of opposite sign, and since the interior of the non-degenerate triangle constituted by these two points and 0 has no element of $\Gamma_n$.

As a first easy consequence of Subsection \ref{subsec:contfrac}, we have the following:

\begin{lem}
\label{lem:cor1contfrac}
If for some sequence $(n\mapsto j_n)$, we have $u_{n,j_n}\to u_\infty$ and $u_{n,j_n+1}\to v_\infty$, with
\begin{align*}
0<\RE(u_\infty)<\RE(v_\infty)<\infty,\\
0<|\IM(v_\infty)|<|\IM(u_\infty)|<\infty\\
\end{align*}
then $\Gamma_n$ converges for the Chabauty topology to $\Gamma_\infty=\langle u_\infty,v_\infty \rangle$.
\end{lem}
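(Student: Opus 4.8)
The plan is to apply the Reduction Lemma (Proposition~\ref{redlem}), exactly as in the proofs of Theorems~\ref{thm:reducingargument1} and~\ref{thm:reducingargument2}, but now in the simple setting $X=Y=\CC$. The point is that the pair of generators $(u_{n,j_n},u_{n,j_n+1})$ is a \emph{convergent} pair, and the lattice $\Gamma_n=\langle u_{n,j_n},u_{n,j_n+1}\rangle$ can be written as the image of the fixed lattice $\ZZ^2$ (or of a convergent family of lattices) under a linear map that converges. So the whole statement should fall out of continuity of the map ``pair of generators $\mapsto$ lattice they generate'' together with a properness/no-escape-of-mass estimate.

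\textbf{Step 1: set-up.} First I would fix the sequence $j=j_n$ and define, for each $n$, the $\RR$-linear map $\varphi_n:\CC\to\CC$ (thinking of $\CC$ as $\RR^2$) by sending the standard basis $(1,i)$ to $(u_{n,j_n},u_{n,j_n+1})$, and the limiting map $\varphi:\CC\to\CC$ sending $(1,i)$ to $(u_\infty,v_\infty)$. By hypothesis $\varphi_n\to\varphi$ (entrywise convergence of $2\times 2$ real matrices), and this convergence is automatically uniform on compact subsets of $\CC$ since the maps are linear; $\varphi$ is continuous. Then $\Gamma_n=\varphi_n(\ZZ^2)$ and $\Gamma_\infty:=\langle u_\infty,v_\infty\rangle=\varphi(\ZZ^2)$, and $\ZZ^2\to\ZZ^2$ is (trivially) a convergent sequence of closed subsets of $\CC$ with limit $\ZZ^2$.

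\textbf{Step 2: properness and the no-escape condition.} The key is to check that $\varphi$ is proper and that for every compact $K\subset\CC$ the set $\overline{\bigcup_{n\ge N}\varphi_n^{-1}(K)}$ is compact for $N$ large. Here the two-sided inequalities in the hypothesis are exactly what is needed: they say $u_\infty$ and $v_\infty$ are $\RR$-linearly independent (since $\RE(u_\infty)<\RE(v_\infty)$ while $|\IM(v_\infty)|<|\IM(u_\infty)|$ with opposite signs of the imaginary parts forces the determinant to be bounded away from $0$), so $\varphi$ is invertible, hence proper; and because $\varphi_n\to\varphi$ with $\det\varphi\ne 0$, the determinants $\det\varphi_n$ are bounded away from $0$ and $\infty$ for $n$ large, so the inverse maps $\varphi_n^{-1}$ are uniformly bounded in operator norm, giving $\varphi_n^{-1}(K)\subset$ a fixed ball for $n$ large and any fixed bounded $K$. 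This is the step requiring a little care, but it is just the observation that a convergent sequence of invertible $2\times 2$ matrices with invertible limit has uniformly bounded inverses.

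\textbf{Step 3: conclude.} Apply Proposition~\ref{redlem} with $F_n=\ZZ^2\to F=\ZZ^2$: it yields $\overline{\varphi_n(\ZZ^2)}\to\overline{\varphi(\ZZ^2)}$ in the Hausdorff topology of $\CC$, i.e. $\Gamma_n\to\Gamma_\infty=\langle u_\infty,v_\infty\rangle$ in the Chabauty topology (the closures being harmless since lattices are already closed). This is exactly the assertion. I expect the only genuine obstacle to be Step~2, and even there the subtlety is minor: one must make sure the hypothesis genuinely gives $\det\varphi\ne 0$ rather than merely that the four real entries are finite — but the sign condition on the imaginary parts (opposite signs, from the definition $u_{n,j}$ with $q_{n,j}\theta_n-p_{n,j}$ alternating, cf.\ Lemma~\ref{lem:easypptycontfrac}) rules out degeneracy, so this is fine. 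Everything else is the routine application of the Reduction Lemma already rehearsed twice in Section~\ref{sec:redlem}.
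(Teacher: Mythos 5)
Your proposal is correct and is essentially the paper's argument: the paper's proof is the one-line observation that $u_{n,j_n}$ and $u_{n,j_n+1}$ always generate $\Gamma_n$ and converge to $\RR$-linearly independent vectors, and your Steps 1--3 simply carry out the routine verification of that observation via the Reduction Lemma (which the hypotheses indeed support, since the stated inequalities already force $\det\varphi\neq 0$). Nothing is missing.
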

\begin{proof}
This is immediate, since $u_{n,j_n}$ and $u_{n,j_n+1}$ always generate $\Gamma_n$, and converge to $\RR$-linearly independent vectors $u_\infty$, $v_\infty$.
\end{proof}

At this point, it does not seem that the way we expressed $\Gamma_n$ using the continued fraction expansion of $\theta_n$ is by any mean more concrete that the use of the Weierstrass elliptic function in \cite{PourHubb}.

Contrary to this appearence, the two following lemmas show that a lot of the properties of the pair $(u_{n,j},u_{n,j+1})$ can be ``read'' in the continued fraction expansion of $\theta_n$.

\begin{lem}
As above, let 
\[
\theta_n=[0;\alpha_{n,1},\ldots,\alpha_{n,j},\ldots]
\]
be the continued fraction expansion of $\theta_n$.
Also, choose a sequence $(n\mapsto j_n)$.

Then, $\rho_n=\dfrac{\RE(u_{n,j_n+1})}{\RE(u_{n,j_n})}$ has continued fraction expansion 
\[
\rho_n=[\alpha_{n,j_n+1};\alpha_{n,j_n},\alpha_{n,j_n-1},\ldots, \alpha_{n,1}]
\]
and $\eta_n=\Bigl\lvert\dfrac{\IM(u_{n,j_n+1})}{\IM(u_{n,j_n})}\Bigr\rvert$ has continued fraction expansion 
\[
\eta_n=[0;\alpha_{n,j_n+2};\alpha_{n,j_n+3},\ldots].
\]

In other words, $\rho_n$ is obtained by reading the continued fraction of $\theta_n$ backwards, starting at the index $j_n+1$, and 
$\eta_n$ is obtained by reading the continued fraction of $\theta_n$ forwards, starting at the index $j_n+2$. 
\end{lem}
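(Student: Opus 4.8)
The plan is to compute both ratios directly from the definition $u_{n,j} = q_{n,j} x_n + i l_n(q_{n,j}\theta_n - p_{n,j})$ and to recognize the resulting expressions as continued fractions by invoking the recurrences in Lemma \ref{lem:easypptycontfrac}. For the real-part ratio $\rho_n$, the factors $x_n$ cancel, so $\rho_n = q_{n,j_n+1}/q_{n,j_n}$. First I would use the recurrence $q_{n,j+1} = \alpha_{n,j+1} q_{n,j} + q_{n,j-1}$ to write
\[
\frac{q_{n,j_n+1}}{q_{n,j_n}} = \alpha_{n,j_n+1} + \frac{q_{n,j_n-1}}{q_{n,j_n}},
\]
and then iterate: $q_{n,j}/q_{n,j-1} = \alpha_{n,j} + q_{n,j-2}/q_{n,j-1}$, peeling off one partial quotient at each step and descending the index until we reach $q_{n,1}/q_{n,0} = q_{n,1} = \alpha_{n,1}$ (using $q_{n,0}=1$, $q_{n,-1}=0$). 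This is exactly the statement that $\rho_n = [\alpha_{n,j_n+1}; \alpha_{n,j_n}, \dots, \alpha_{n,1}]$; it is a finite induction on $j_n$ with no real obstacle.

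For the imaginary-part ratio, the factors $l_n$ cancel, giving $\eta_n = |q_{n,j_n+1}\theta_n - p_{n,j_n+1}| / |q_{n,j_n}\theta_n - p_{n,j_n}|$. Here I would use the standard fact — derivable from the same recurrences and from $q_{n,j+1}p_{n,j} - q_{n,j}p_{n,j+1} = (-1)^{j}$ — that the tail of the continued fraction controls the approximation error. Concretely, writing $\theta_n = [0; \alpha_{n,1}, \dots, \alpha_{n,j}, \beta_{n,j+1}]$ where $\beta_{n,j+1} = [\alpha_{n,j+1}; \alpha_{n,j+2}, \dots]$ is the $(j+1)$-st complete quotient, one has the identity
\[
q_{n,j}\theta_n - p_{n,j} = \frac{(-1)^{j}}{\beta_{n,j+1}q_{n,j} + q_{n,j-1}},
\]
(and, as sanity check, $\beta_{n,j+1}q_{n,j}+q_{n,j-1}$ with $\beta$ replaced by $\alpha_{n,j+1}$ recovers $q_{n,j+1}$). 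Taking the ratio of the expressions for index $j_n+1$ and index $j_n$, the signs give absolute value $1$ and one obtains
\[
\eta_n = \frac{\beta_{n,j_n+1}q_{n,j_n} + q_{n,j_n-1}}{\beta_{n,j_n+2}q_{n,j_n+1} + q_{n,j_n}}.
\]
The numerator is precisely $q_{n,j_n+1}$ by the definition of $\beta$, while writing $\beta_{n,j_n+2} = \alpha_{n,j_n+2} + 1/\beta_{n,j_n+3}$ and simplifying the denominator $\beta_{n,j_n+2}q_{n,j_n+1} + q_{n,j_n}$ shows it equals $q_{n,j_n+1}\beta_{n,j_n+2} + q_{n,j_n} = q_{n,j_n+1}/\eta_n$... — more cleanly, one checks that $q_{n,j_n+1}/(\beta_{n,j_n+2}q_{n,j_n+1}+q_{n,j_n}) = 1/(\beta_{n,j_n+2} + q_{n,j_n}/q_{n,j_n+1}) = 1/(\beta_{n,j_n+2} + 1/\rho_n)$, and since $\beta_{n,j_n+2} = [\alpha_{n,j_n+2}; \alpha_{n,j_n+3}, \dots]$, this is exactly $[0; \alpha_{n,j_n+2}, \alpha_{n,j_n+3}, \dots]$ read off directly. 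Thus $\eta_n$ is the forward tail of the continued fraction starting at index $j_n+2$, as claimed.

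The main obstacle I anticipate is purely bookkeeping: establishing the complete-quotient identity for $q_{n,j}\theta_n - p_{n,j}$ cleanly and tracking signs and index shifts so that the "off-by-one" in the forward tail ($j_n+2$, not $j_n+1$) comes out correctly. There is also the minor point that when $\theta_n$ is rational the continued fraction is finite, so for $j_n$ near the end of the expansion some of these tails are empty or the complete quotient $\beta$ is infinite; one should note that in that regime $\IM(u_{n,j})$ vanishes and the relevant ratios degenerate, which is consistent with the hypotheses under which the lemma is applied (sequences with nonzero imaginary parts of opposite sign). Modulo these edge cases, the proof is a direct algebraic unwinding of the continued-fraction recurrences.
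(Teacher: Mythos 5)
Your argument for $\rho_n$ is correct and is essentially the paper's: the paper sets $\rho_{n,k+1}=q_{n,k+1}/q_{n,k}$ and runs the same induction via $q_{n,k+1}=\alpha_{n,k+1}q_{n,k}+q_{n,k-1}$, giving $\rho_{n,k+1}=\alpha_{n,k+1}+1/\rho_{n,k}$ with $\rho_{n,1}=\alpha_{n,1}$.

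The $\eta_n$ half contains a genuine algebraic error. Write $e_j=q_{n,j}\theta_n-p_{n,j}$ and let $\beta_{j+1}$ be the complete quotient; your identity $|e_j|=1/(\beta_{j+1}q_{n,j}+q_{n,j-1})$ is fine, but the numerator $\beta_{j_n+1}q_{n,j_n}+q_{n,j_n-1}$ of the resulting ratio is \emph{not} $q_{n,j_n+1}$. Since $\beta_{j_n+1}=\alpha_{n,j_n+1}+1/\beta_{j_n+2}$, it equals $q_{n,j_n+1}+q_{n,j_n}/\beta_{j_n+2}=(\beta_{j_n+2}q_{n,j_n+1}+q_{n,j_n})/\beta_{j_n+2}$ --- your own ``sanity check'' already records that one recovers $q_{n,j_n+1}$ only after replacing $\beta$ by $\alpha$, so you cannot then use the two interchangeably. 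As a consequence your final expression $1/(\beta_{j_n+2}+1/\rho_n)$ is \emph{not} equal to $[0;\alpha_{n,j_n+2},\alpha_{n,j_n+3},\ldots]=1/\beta_{j_n+2}$; the extra $1/\rho_n$ in the denominator is nonzero. The two mistakes do not cancel, so as written the step fails. The route is salvageable in one line: with the correct numerator the factor $\beta_{j_n+2}q_{n,j_n+1}+q_{n,j_n}$ cancels and one gets $\eta_n=1/\beta_{j_n+2}$ exactly, which is the claimed tail. The paper avoids complete quotients entirely and instead mirrors the $\rho_n$ computation: from $e_{k+1}=\alpha_{n,k+1}e_k+e_{k-1}$ and the sign alternation of the $e_k$, the quantity $\eta_{n,k+1}=|e_{k+1}/e_k|$ satisfies $\eta_{n,k+1}=-\alpha_{n,k+1}+1/\eta_{n,k}$ with $\eta_{n,0}=\theta_n$, which unwinds to $\eta_{n,k}=[0;\alpha_{n,k+1},\alpha_{n,k+2},\ldots]$. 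That forward recursion is shorter, symmetric with the $\rho_n$ case, and handles the terminating (rational $\theta_n$) edge case you flag without separate discussion.
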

\begin{proof}
The first assertion is proven by a simple induction on $k$ for $\rho_{n,k+1}=q_{n,k+1}/q_{n,k}$,
since $\rho_{n,1}=q_{n,1}/q_{n,0}=\alpha_{n,1}$ and 
\[\rho_{n,k+1}=\dfrac{q_{n,k+1}}{q_{n,k}}=\dfrac{\alpha_{n,k+1}q_{n,k}+q_{n,k-1}}{q_{n,k}}=\alpha_{n,k+1}+\dfrac{1}{\rho_{n,k}}\]

Similarly for the second assertion, define for all $k$, $\eta_{n,k+1}=\Bigl\lvert\dfrac{q_{n,k+1}\theta_n-p_{n,k+1}}{q_{n,k}\theta_n-p_{n,k}}\Bigr\rvert$. 
Then $\eta_{n,0}=\theta_n$ and
\[\eta_{n,k+1}=-\dfrac{q_{n,k+1}\theta_n-p_{n,k+1}}{q_{n,k}\theta_n-p_{n,k}}=-\alpha_{n,k+1}+\dfrac{1}{\eta_{n,k}}\]
and we are done.
\end{proof}

\begin{lem}
\label{lem:bastardlem}
The set of lattices of $\CC$ generated by a pair of vectors $u,v\in \CC$ verifying  
\[
\begin{cases}
0<\RE(u)<\RE(v),\\
0<|\IM(v)|<|\IM(u)|,\\
\IM(u)\cdot \IM(v) <0,\\
\RE(v)/\RE(u)\in \RR\setminus\QQ
\end{cases}
\]
is dense in the space of closed subgroups of $\CC$, for the Chabauty topology.
\end{lem}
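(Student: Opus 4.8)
The plan is to show that the indicated family of lattices is dense by exhibiting, for any prescribed target $\Gamma_0 \in \CCC(\CC)$, a sequence of lattices from the family converging to $\Gamma_0$ in the Chabauty topology. I would first reduce to targets of a convenient normal form: since the set of lattices (i.e. closed subgroups isomorphic to $\ZZ^2$) is already dense in $\CCC(\CC)$ — every closed subgroup of $\CC$ is a Hausdorff limit of lattices, by the classification recalled before Lemma~\ref{lem:subCl} and the convergence results of Lemma~\ref{lem:chabeasy} — it suffices to approximate an arbitrary \emph{lattice} $\Gamma_0 = \langle w_1, w_2\rangle$ by lattices of the special shape. Then, up to applying a rotation (which acts continuously on $\CCC(\CC)$ and does not affect membership in the family, after we check the family is rotation-stable, or rather up to re-deriving the constraints as the image of a rotation-invariant open condition), I would further reduce to a $\Gamma_0$ whose generators already satisfy the strict inequalities $0<\RE(u_\infty)<\RE(v_\infty)$, $0<|\IM(v_\infty)|<|\IM(u_\infty)|$, $\IM(u_\infty)\IM(v_\infty)<0$; this is an \emph{open} condition on pairs of $\RR$-linearly independent vectors, and every lattice has \emph{some} basis lying in this open set (pick a shortest vector and a suitable second vector, then apply a rotation and possibly swap/negate).

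Once $\Gamma_0 = \langle u_\infty, v_\infty \rangle$ with $u_\infty, v_\infty$ in that open region, the task is to perturb $(u_\infty, v_\infty)$ to a nearby pair $(u,v)$ that additionally satisfies the \emph{irrationality} constraint $\RE(v)/\RE(u) \in \RR\setminus\QQ$. This is immediate: the open region carved out by the first three (strict) inequalities is a nonempty open subset of $\CC^2$, and within any such open set the pairs with $\RE(v)/\RE(u)$ irrational are dense (the rationality locus is a countable union of proper real-analytic hypersurfaces, hence meager). Choosing $(u,v)$ close to $(u_\infty, v_\infty)$ gives a lattice $\langle u, v\rangle$ in the family; and since the map $\CC^2 \to \CCC(\CC)$, $(u,v)\mapsto \langle u,v\rangle$ is continuous at $\RR$-linearly independent pairs, $\langle u,v\rangle$ is Chabauty-close to $\Gamma_0$. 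Composing the two approximations (arbitrary target $\leadsto$ lattice $\leadsto$ lattice with the right shape $\leadsto$ lattice in the family) and invoking the triangle inequality for $d_H$ finishes the argument.

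I expect the only genuinely delicate point to be bookkeeping the reduction to the open normal-form region — specifically, verifying that every lattice admits a basis $(u,v)$ satisfying the three strict inequalities (after a rotation), and confirming that applying that rotation is harmless, i.e. that the final family statement is insensitive to it. One clean way to handle this: observe that the set $S$ of pairs $(u,v)\in\CC^2$ satisfying \emph{all four} conditions is invariant under the substitutions coming from $\mathrm{SL}_2(\ZZ)$ acting on bases only up to the stated inequalities, so it is more robust to argue directly that $\{\langle u,v\rangle : (u,v)\in S\}$ is dense by the two-step approximation above, never needing a rotation at all if one instead notes that the unrotated open region $\{0<\RE u<\RE v,\ 0<|\IM v|<|\IM u|,\ \IM u\,\IM v<0\}$ already contains bases of lattices that are dense in $\CCC(\CC)$ — for instance all the lattices $\langle 1+i t\,,\, s - i t'\rangle$ with suitable small $t,t',$ large $s$ realize, in the limit, all the subgroups $C_x$, $(1+it)\RR$, $\CC$, and together with rescalings all lattices, by the computations of Section~\ref{subsec:contfracappl}. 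The remaining steps (irrational perturbation; continuity of $(u,v)\mapsto\langle u,v\rangle$; meagerness of the rationality locus) are routine and I would only sketch them.
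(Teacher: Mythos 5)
The paper offers no argument here (its proof is literally ``standard argument, left to the reader''), so your sketch can only be judged on its own terms. The two-step strategy — lattices are dense in $\CCC(\CC)$, then approximate a given lattice by lattices of the prescribed shape, perturbing at the end to make $\RE(v)/\RE(u)$ irrational — is surely the intended one, and the irrationality step is indeed routine. But the pivotal claim ``every lattice has some basis lying in this open set'' is false: the square lattice $\ZZ+i\ZZ$ is a counterexample. A basis $(a+bi,c+di)$ satisfying the first three conditions would need $a,c>0$ and $bd<0$, so that $ad$ and $-bc$ are nonzero integers of the same sign, whence $|ad-bc|=|ad|+|bc|\ge 2$, contradicting unimodularity. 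No choice of shortest vector, swap, or negation repairs this, and the rotation you then invoke necessarily changes the lattice. Your closing ``cleaner'' alternative does not close the gap either: the assertion that the unrotated region ``already contains bases of lattices that are dense in $\CCC(\CC)$'' is essentially the lemma being proved, and the appeal to ``rescalings'' fails because complex rescalings are rotations and do not preserve the region.

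The fix is near at hand and you half-see it, but you should commit to one version of it. Density does not require the family to be rotation-invariant; it only requires that each target lattice $\Gamma_0$ have members of the family arbitrarily close to it. Either (a) show that every lattice admits a basis in the \emph{closure} of the region (for $\ZZ+i\ZZ$, take $(i,1)$; in general one must check that sign changes and shears $w_2\mapsto w_2-kw_1$ applied to a reduced basis land both vectors in the closed right half-plane, on weakly opposite sides of the real axis, with the required domination of real and imaginary parts), and then perturb that basis into the open region with irrational ratio — a small perturbation of a nondegenerate generating pair is a small Chabauty perturbation of the lattice; or (b) replace $\Gamma_0$ by $e^{i\theta}\Gamma_0$ for small generic $\theta$, observe that small rotations are Chabauty-small (points of norm at most $R$ move by at most $R|\theta|$), and verify that the rotated lattice has a basis in the open region. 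Under either route the ``is the rotation harmless'' worry dissolves; what actually needs proving, and what your sketch never establishes, is the existence of the basis in the closed (or rotated) region for an \emph{arbitrary} lattice.
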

\begin{proof}
This follows from a standard argument, left to the reader.
\end{proof}

The following proposition translates in the world of $\PC$ as saying that we can obtain any parabolic group $P$ as a sequence of cyclic groups $H_n$ with hyperbolic generators.
Additionally, this remains true if we ask the fixed points of $H_n$ to converge radially.
More precisely, suppose for instance that the fixed point of $P$ is $0\in \Chat\cong\CP1$ and choose some preferred angle $\omega$.
Then we can find a sequence $H_n$ converging to $P$ with $\Fix(H_n)=\{0,f_n\}$ and $\Arg{f_n}=-\omega$ for all $n$.

\begin{prop}
\label{prop:cor2contfrac}
Let $\Gamma$ be any closed subgroup of $\CC$, and $\theta\in [0,1)$.
Then there exist sequences $l_n\to \infty$, $x_n\to 0$ and $\theta_n \to \theta\in [0,1]/(0\sim 1)$ such that the sequence of lattices
\[
\Gamma_n=\langle il_n, x_n +i\theta_n l_n\rangle
\]
converges to $\Gamma$ in the Chabauty topology
\end{prop}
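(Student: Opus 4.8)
The plan is to reduce the general case to the case where $\Gamma$ is a lattice of the special form appearing in Lemma \ref{lem:bastardlem}, and then to realize such a lattice as a limit of the $\Gamma_n$ by reading off a suitable continued fraction expansion. More precisely, I would proceed in three steps. First, I would dispose of the ``easy'' target groups: if $\Gamma$ is one of $\{0\}$, $\ZZ$, $\RR$, $\ZZ\times\RR$, $i\RR$, $C_x$, or $\CC$, the convergence is already covered by the lemmas of Subsection 5.1 (the cases $l_n\to 0$, $l_n\to l\in(0,\infty)$, or the explicit table for $l_n\to\infty$), possibly after an elementary rescaling; and for $\theta$ irrational we already know $\Gamma_n\to\CC$ by Lemma \ref{r2limits}, so for those targets we may pick $\theta_n=\theta$ or adjust $\theta_n$ freely. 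So the only real content is when $\Gamma\cong\ZZ^2$, i.e.\ $\Gamma$ is a genuine lattice.

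\textbf{Step 2: approximating $\Gamma$ by a ``good'' lattice.} By Lemma \ref{lem:bastardlem}, the lattices generated by a pair $u,v$ with
\[
0<\RE(u)<\RE(v),\quad 0<|\IM(v)|<|\IM(u)|,\quad \IM(u)\IM(v)<0,\quad \RE(v)/\RE(u)\notin\QQ
\]
are dense in $\CCC(\CC)$; since $\CCC(\CC)$ is metrizable and we only need \emph{some} sequence converging to $\Gamma$, it suffices to realize each such ``good'' lattice as a limit of $\Gamma_n$'s of the prescribed shape, and then diagonalize. (One must be a little careful: the $\Gamma_n$ are honest lattices, and we want the final limit to be exactly $\Gamma$, not merely close; but a standard diagonal extraction against a countable dense set of good lattices converging to $\Gamma$ does the job, using that Hausdorff convergence is metrizable.) So fix a good lattice $L=\langle u,v\rangle$ with $\RE(v)/\RE(u)=:\rho\notin\QQ$ and $\bigl|\IM(v)/\IM(u)\bigr|=:\eta\in(0,1)$.

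\textbf{Step 3: building the continued fraction.} Here the previous two lemmas are the engine. Expand $\rho=[\alpha_0;\alpha_1,\alpha_2,\dots]$ and $\eta=[0;\beta_1,\beta_2,\dots]$ as (infinite, since $\rho\notin\QQ$) continued fractions. I want to choose $\theta_n$ whose continued fraction expansion, read backwards from some index $j_n+1$, begins with $[\alpha_0;\alpha_1,\dots]$ and, read forwards from index $j_n+2$, begins with $[0;\beta_1,\beta_2,\dots]$ — i.e.\ $\theta_n$ should look like $[0;\dots,\alpha_1,\alpha_0,\beta_1,\beta_2,\dots]$ with the palindrome-joint at position $j_n$. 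Concretely, set $j_n$ to be some slowly growing index, let the first $j_n$ partial quotients of $\theta_n$ be $\alpha_{j_n-1},\dots,\alpha_1,\alpha_0$ followed by $\beta_1,\dots,\beta_{k_n}$ (with $k_n\to\infty$), and then close up $\theta_n$ to a rational (so that $\Gamma_n=\langle il_n, x_n+i\theta_n l_n\rangle$ makes sense with $\theta_n\in[0,1)$ — the value of the tail beyond $k_n$ is immaterial as it only affects high-order convergents). By the lemma relating $\rho_n=\RE(u_{n,j_n+1})/\RE(u_{n,j_n})$ and $\eta_n=|\IM(u_{n,j_n+1})/\IM(u_{n,j_n})|$ to the reversed/forward continued fraction of $\theta_n$, we get $\rho_n\to\rho$ and $\eta_n\to\eta$. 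Now choose $x_n\to 0$ and $l_n\to\infty$ so as to rescale the pair $(u_{n,j_n},u_{n,j_n+1})$ to converge to $(u,v)$: since $\RE(u_{n,j_n})=q_{n,j_n}x_n$ we want $q_{n,j_n}x_n\to\RE(u)$, which pins down $x_n$ (and forces $x_n\to 0$ provided $q_{n,j_n}\to\infty$, which we arrange); and since $\IM(u_{n,j_n})=l_n(q_{n,j_n}\theta_n-p_{n,j_n})$ with $|q_{n,j_n}\theta_n-p_{n,j_n}|<1/q_{n,j_n+1}\to 0$, we want $l_n/q_{n,j_n+1}$ of the right order, which pins down $l_n$ (and forces $l_n\to\infty$). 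With these choices, $u_{n,j_n}\to u$ and $u_{n,j_n+1}\to v$, both with the required inequalities on real/imaginary parts and opposite signs of imaginary parts, so Lemma \ref{lem:cor1contfrac} gives $\Gamma_n\to\langle u,v\rangle=L$. Finally, $\theta_n\to\theta$: this is the one place requiring attention, since a priori the palindromic prefix we imposed has nothing to do with $\theta$. The fix is that we are free to prepend arbitrarily many partial quotients: insert, before the block $\alpha_{j_n-1},\dots,\alpha_0,\beta_1,\dots,\beta_{k_n}$, a long prefix equal to the first $N_n$ partial quotients of the target $\theta$ (with $N_n\to\infty$ slowly), shifting $j_n$ accordingly; since the convergents of $\theta$ converge to $\theta$ and the subsequent blocks only perturb $\theta_n$ beyond the $N_n$-th convergent, we get $\theta_n\to\theta$ while the reversed/forward readings around $j_n$ are unaffected.

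\textbf{Main obstacle.} The delicate point is the simultaneous bookkeeping: we must choose $j_n$, $k_n$, $N_n$, $x_n$, $l_n$ all as functions of $n$ so that (a) $q_{n,j_n}\to\infty$ and $q_{n,j_n+1}/q_{n,j_n}\to\rho$ (giving $x_n\to 0$ and $\rho_n\to\rho$), (b) $l_n\to\infty$ with $l_n(q_{n,j_n}\theta_n-p_{n,j_n})\to\IM(u)$ and $\eta_n\to\eta$, and (c) the truncation errors from replacing the infinite continued fractions of $\rho,\eta,\theta$ by finite ones all vanish — and these constraints interact because $q_{n,j_n}$ depends on the \emph{entire} prefix, including the $\theta$-block. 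I expect most of this to be routine once the orders of magnitude are set up, but verifying that the rescaling is consistent (that the same $l_n$ works for both coordinates, i.e.\ that $\RE$ and $\IM$ of $u_{n,j_n}$ can be normalized independently using the two free parameters $x_n,l_n$) is the crux, and is exactly what the good-lattice normalization of Lemma \ref{lem:bastardlem} — two independent scales, one for each axis — is designed to accommodate.
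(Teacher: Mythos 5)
Your proposal is correct and follows essentially the same route as the paper's proof: reduce to the dense family of ``good'' lattices from Lemma \ref{lem:bastardlem} via a diagonal argument, build $\theta_n$ by concatenating a prefix of $\theta$'s continued fraction with the reversed expansion of $\rho$ and the forward expansion of $\eta$, normalize $x_n$ and $l_n$ so that $u_{n,j_n}\to u$, and conclude with Lemma \ref{lem:cor1contfrac}. The only cosmetic difference is that you truncate the $\eta$-tail and prepend the $\theta$-block as an afterthought, whereas the paper fixes the block lengths to be equal and keeps the full $\gamma$-tail; both variants work.
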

\begin{proof}
First assume that $\theta\in [0,1]\setminus \QQ$ and that $\Gamma$ is generated by a pair of vectors $u,v\in \CC$ verifying  
\[
\begin{cases}
0<\RE(u)<\RE(v),\\
0<|\IM(v)|<|\IM(u)|,\\
\IM(u)\cdot \IM(v) <0,\\
\RE(v)/\RE(u)\in \RR\setminus\QQ
\end{cases}
\]
Define $\rho=\RE(v)/\RE(u)$ and $\eta=|\IM(v)/\IM(u)|$

Suppose for instance that $\IM(u)>0$, the case $\IM(u)<0$ being similar.
Also, define the following continued fraction expansions:
\[
\begin{cases}
\theta=[0;\alpha_1,\alpha_2,\ldots]\\
\rho=[\beta_0;\beta_1,\beta_2,\ldots]\\
\eta=[0;\gamma_1,\gamma_2,\ldots]
\end{cases}
\]
Here by assumption, the first two expansions are infinite, $\beta_0>0$, and the last expansion is either finite or infinite.

For all $n$, let us define $\theta_n$ by its continued fraction expansion:
\[
\theta_n=[0;\alpha_1,\alpha_2,\ldots,\alpha_n,\beta_n,\beta_{n-1},\ldots,\beta_0,\gamma_1,\gamma_2,\ldots]
\]
Also, this defines some $\rho_n$ and $\eta_n$ by 
\[
\begin{cases}
\rho_n=[\beta_0;\beta_1,\ldots,\beta_n,\alpha_n,\ldots,\alpha_1] \\
\eta_n=[0;\gamma_1,\gamma_2,\ldots]
\end{cases}
\]

Define the coprime positive integers $p_n$ and $q_n$ such that $p_n/q_n$ has the expansion 
\[
\theta_n=[0;\alpha_1,\alpha_2,\ldots,\alpha_n,\beta_n,\beta_{n-1},\ldots,\beta_0]
\]

Now, define $x_n$ and $l_n$ so that $x_n q_n=\RE(u)$ and $l_n (q_n\theta_n-p_n)=\IM(u)$.
Then by Lemma \ref{lem:cor1contfrac}, $\Gamma_n=\langle il_n, x_n +i\theta_n l_n\rangle$ converges to $\Gamma=\langle u,v \rangle$, and we are done for this case.

But now, by Lemma \ref{lem:bastardlem} and by a standard density and diagonal argument, we are done in all the remaining cases.
\end{proof}
\begin{rmk}
We actually gave explicit generators for $\Gamma_n$ only in the generic case where $\Gamma=\lim\Gamma_n$ is as in Lemma \ref{lem:cor1contfrac} and $\theta\notin \QQ$.
This can also be achieved for the other choices of $\Gamma$ and $\theta$ by some minor changes, left to the reader. 
\end{rmk}

Finally, let us describe explicitely the limit of a converging sequence $\Gamma_n $ as above, using only the sequences $(l_n)$, $(x_n)$ and the coefficients $\alpha_i$ of the expansion in continued fraction $\theta$.
Let us start with an easy lemma.

\begin{lem}
For any $n$, the two minimal values of 
\[
j\mapsto q_{n,j}x_n+il_n(q_{n,j}\theta_n-p_{n,j})
\]
for the max norm 
\[
 \|x+iy\|_\infty=\Max(|x|,|y|)
\]
are obtained for two consecutive integers.
\end{lem}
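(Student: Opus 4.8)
\emph{Plan.} Recall that $u_{n,j}=q_{n,j}x_n+il_n(q_{n,j}\theta_n-p_{n,j})$, so that, since $x_n>0$ and $l_n>0$,
\[
\|u_{n,j}\|_\infty=\max\bigl(q_{n,j}x_n,\ l_n|q_{n,j}\theta_n-p_{n,j}|\bigr).
\]
The strategy is to observe that the two entries inside this maximum are monotone in $j$, in opposite directions, so that $j\mapsto\|u_{n,j}\|_\infty$ has a ``valley'' shape, and then to read off the conclusion. First I would record the two monotonicities. On one hand, the recursion $q_{n,j}=\alpha_{n,j}q_{n,j-1}+q_{n,j-2}$ from Lemma \ref{lem:easypptycontfrac}, together with $\alpha_{n,j}\ge1$ and $q_{n,j-2}\ge0$, shows that $j\mapsto q_{n,j}$ is nondecreasing (for $j\ge-1$) and strictly increasing for $j\ge1$; hence $j\mapsto q_{n,j}x_n$ is nondecreasing, and tends to $+\infty$ when $\theta_n$ is irrational. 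On the other hand, $j\mapsto|q_{n,j}\theta_n-p_{n,j}|$ is strictly decreasing as long as it is positive --- this is the best-approximation property of the convergents, part of the standard theory recalled in Subsection \ref{subsec:contfrac}; when $\theta_n$ is rational it reaches $0$ at the final convergent, which is then the largest index $j$ under consideration.

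Next I would turn this into the valley shape. Let $j^{\star}$ be the least index with $q_{n,j^{\star}}x_n\ge l_n|q_{n,j^{\star}}\theta_n-p_{n,j^{\star}}|$; this exists by the previous paragraph, and $j^{\star}\ge0$ because at $j=-1$ one has $q_{n,-1}x_n=0<l_n=l_n|q_{n,-1}\theta_n-p_{n,-1}|$. Since the real part only grows and the imaginary part only shrinks, once the real part wins it keeps winning, so $\|u_{n,j}\|_\infty=l_n|q_{n,j}\theta_n-p_{n,j}|$ for $j<j^{\star}$ and $\|u_{n,j}\|_\infty=q_{n,j}x_n$ for $j\ge j^{\star}$. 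Thus $j\mapsto\|u_{n,j}\|_\infty$ is strictly decreasing on $\{j<j^{\star}\}$ and nondecreasing on $\{j\ge j^{\star}\}$; in particular each of its sublevel sets $\{j:\|u_{n,j}\|_\infty\le c\}$ is an interval of consecutive integers.

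Finally, from this unimodality the statement is immediate: the global minimum of $\|u_{n,\cdot}\|_\infty$ is attained at $j^{\star}-1$ or at $j^{\star}$, and passing to the second-smallest value only enlarges the corresponding sublevel set to a slightly longer interval of consecutive integers, inside which one can exhibit two consecutive indices realizing the two smallest values. I do not expect a genuine obstacle here; the whole content sits in the two monotonicity statements of the first paragraph. The only point requiring care is the bookkeeping of degenerate cases --- a finite range of $j$ when $\theta_n$ is rational, and ties (e.g.\ $q_{n,0}x_n=q_{n,1}x_n$ when $\alpha_{n,1}=1$, or an equality between the real and imaginary parts) --- but these only ever make the minimum be realized at two or three consecutive integers, which is compatible with the claim.
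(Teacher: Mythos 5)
Your argument is essentially identical to the paper's: the paper also observes that $j\mapsto q_{n,j}x_n$ is increasing while $j\mapsto l_n|q_{n,j}\theta_n-p_{n,j}|$ is decreasing, so the max norm is first decreasing and then increasing, whence the two smallest values occur at consecutive indices. Your version merely spells out the monotonicity inputs and the degenerate/tie cases in more detail, which is fine.
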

\begin{proof}
Since $(j\mapsto q_{n,j}x_n)$ is increasingly converging to $\infty$ and $(j\mapsto l_n|q_{n,j}\theta_n-p_{n,j}|)$ is decreasingly converging to 0,
\[
j\mapsto \| q_{n,j}x_n+il_n(q_{n,j}\theta_n-p_{n,j}) \|_\infty
\]
is first decreasing and then increasing.
\end{proof}

\begin{prop}
For all $n$, let $j_n$ be such that the two minimal values of
\[
j\mapsto q_{n,j}x_n+il_n(q_{n,j}\theta_n-p_{n,j})
\]
for the max norm are obtained for $j_n$ and $j_n+1$.
Define $u_n=u_{n,j_n}$ and $v_n=u_{n,j_n+1}$; suppose for instance $\IM(u_n)>0$ for all $n$ (the case $\IM(u_n)>0$ for all $n$ is similar, and we can assume either one of the two by taking an extraction if necessary).
Define $t_u=\lim \Arg{u_n}\in [0,\infty]$, $t_v=\lim \Arg{v_n}\in [-\infty,0]$, assuming these limits exist by taking an extraction if necessary.

If $t_u$ and $t_v$ are neither both 0 nor both $\pm \infty$, then the limit subgroup $\Gamma_\infty=\lim \langle u_n,v_n\rangle$ is the one we expect, namely
\[
 \Gamma_\infty=\Gamma_u+\Gamma_v
\]
with 
\[
\Gamma_u=\lim \langle u_n\rangle= \begin{cases}
                                   (1+it_u)\RR \text{ if } u_n\to 0\\
                                   u_\infty\ZZ \text{ if } u_n\to u_\infty\in \CC\\
                                   \{0\} \text{ if } u_n\to \infty\\
                                  \end{cases}
\]
and similarly for $v$.

If $t_u$ and $t_v$ are either both 0 or both $\pm \infty$, then:
\begin{itemize}
 \item $\Gamma_\infty= iy\ZZ+\RR$ if $t_u=t_v=0$, $u_n\to 0$ and $\frac{\IM(u_n)}{\RE(u_n)}\RE(v_n)+|\IM(v_n)|\to y$,
 \item $\Gamma_\infty=x \ZZ$ if $t_u=t_v=0$ and $u_n\to x\in \RR,\, x > 0$,
 \item $\Gamma_\infty= x\ZZ+i\RR$ if $t_u=+\infty$, $t_v=-\infty$, $v_n\to 0$ and $\IM(u_n)\frac{\RE(v_n)}{|\IM(v_n)|}+\RE(u_n)\to x$,
 \item $\Gamma_\infty=iy \ZZ$ if $t_u=+\infty$, $t_v=-\infty$ and $v_n\to -iy,\, y > 0$.
\end{itemize}
\end{prop}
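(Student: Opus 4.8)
The plan is to reduce everything to the Reduction Lemma (Proposition \ref{redlem}) applied to the pair of generators $(u_n, v_n)$, exactly as was done in Lemma \ref{lem:cor1contfrac}, and to carry out a careful case analysis governed by the limiting arguments $t_u$ and $t_v$. First I would record the structural facts already assembled: $\Gamma_n = \langle u_n, v_n \rangle$ for all $n$ (this is noted right after the definition of $u_{n,j}$), the real parts $\RE(u_n), \RE(v_n)$ are $\ge 0$ with $\RE(u_n) \le \RE(v_n)$, the imaginary parts have opposite signs with $|\IM(v_n)| \le |\IM(u_n)|$, and the open triangle spanned by $0, u_n, v_n$ meets $\Gamma_n$ only at its vertices. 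This last fact is the key rigidity: it says $\langle u_n, v_n \rangle$ is a \emph{reduced} basis in a strong sense, so no hidden short vector can appear in the limit. I would then split into the generic case and the degenerate case.

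In the generic case — $t_u$ and $t_v$ not both $0$ and not both $\pm\infty$ — the two one-parameter subgroups $\langle u_n \rangle$ and $\langle v_n \rangle$ converge to the stated $\Gamma_u$ and $\Gamma_v$ (this is the elementary $\CCC(\CC)$ computation already used throughout Section \ref{sec:bcstar}; the three cases $u_n \to 0$, $u_n \to u_\infty \in \CC^\ast$, $u_n \to \infty$ are exactly the possibilities, and since $\RE(u_n) \ge 0$ and $\Arg(u_n) \to t_u$, a diverging $u_n$ with $|\IM(u_n)|$ bounded forces $\RE(u_n) \to \infty$, hence $\{0\}$, while a bounded $u_n$ with $u_n \not\to 0$ gives $u_\infty \ZZ$, and $u_n \to 0$ gives the line of slope $t_u$). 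The content to check is that $\Gamma_\infty = \Gamma_u + \Gamma_v$, i.e. that the limit of the \emph{join} is the join of the limits. One containment, $\Gamma_u + \Gamma_v \subset \Gamma_\infty$, is immediate from taking limits of $au_n + bv_n$. For the reverse I would use the triangle-emptiness: any point of $\Gamma_\infty$ is a limit of lattice points $a_n u_n + b_n v_n$, and the hypothesis that $t_u, t_v$ are not both $0$ (no pinching together on the real axis) and not both $\pm\infty$ (no pinching together on the imaginary axis) guarantees that the coefficients $a_n, b_n$ cannot blow up while $a_n u_n + b_n v_n$ stays bounded — precisely because $u_n/|u_n|$ and $v_n/|v_n|$ stay a definite angle apart, or one of $u_n, v_n$ stays bounded away from $0$ and $\infty$. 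Hence $a_n, b_n$ stabilize (or the relevant one goes to a genuine $\RR$-direction), and the limit lies in $\Gamma_u + \Gamma_v$.

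The degenerate cases — $t_u = t_v = 0$, or $t_u = +\infty, t_v = -\infty$ — are where the enrichment happens and constitute the main obstacle. Here $u_n$ and $v_n$ pinch along a common line (the real axis, resp. the imaginary axis), so $\langle u_n \rangle$ and $\langle v_n \rangle$ separately converge to subgroups of that line, but suitable integer combinations $a_n u_n + b_n v_n$ with $a_n, b_n \to \infty$ escape transversally. The right move is to change basis: when $t_u = t_v = 0$ and $u_n \to 0$, replace $(u_n, v_n)$ by $(u_n, w_n)$ where $w_n = v_n - c_n u_n$ with $c_n \in \ZZ$ chosen so that $\RE(w_n) \to 0^+$ (possible since $\RE(v_n)/\RE(u_n) \to \infty$); then $\IM(w_n) = \IM(v_n) - c_n \IM(u_n)$, and a direct computation shows $|\IM(w_n)| \to \frac{\IM(u_n)}{\RE(u_n)}\RE(v_n) + |\IM(v_n)| \to y$ up to the $O(\RE(u_n))$ error, so $w_n \to iy$ and we land back in the generic case with $\Gamma_\infty = u_\infty\text{-line} + iy\ZZ = \RR + iy\ZZ$ — matching the stated formula. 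The case $u_n \to x \in \RR^\ast$ with $t_u = t_v = 0$ is simpler: $v_n$ then escapes to $\infty$ along the real axis (its real part dominates), so $\langle v_n \rangle \to \{0\}$ and $\Gamma_\infty = x\ZZ$. The two cases with $t_u = +\infty, t_v = -\infty$ are handled the same way with the roles of real and imaginary parts (and of $u_n, v_n$) interchanged, using $\RE(v_n)/|\IM(v_n)| \to 0$ to build the corrected generator. Throughout, one invokes Proposition \ref{redlem} (with $\varphi_n$ the identity or the obvious affine rescalings) to upgrade the convergence of generators to Chabauty convergence of the generated subgroups; I expect the bookkeeping of which integer combination to subtract, and verifying the claimed limit of $|\IM(w_n)|$, to be the only genuinely delicate computations.
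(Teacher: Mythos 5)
Your overall strategy matches the paper's: the generic case via transversality of the limiting directions, and the case $t_u=t_v=0$, $u_n\to 0$ via the change of basis $w_n=v_n-c_nu_n$, which is exactly the paper's device of intersecting the line through $v_n$ of direction $u_n$ with the imaginary axis (and yields the same formula for $y$). Those parts are fine.

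The genuine gap is in the second bullet (and its mirror image, the fourth). You dispatch the case $t_u=t_v=0$, $u_n\to x>0$ with ``$v_n$ then escapes to $\infty$ along the real axis (its real part dominates), so $\langle v_n\rangle\to\{0\}$ and $\Gamma_\infty=x\ZZ$.'' Two things fail here. First, $t_v=0$ only says $|\IM(v_n)|/\RE(v_n)\to 0$; it does not give $\RE(v_n)\to\infty$. Second, and more seriously, even granting $v_n\to\infty$, the conclusion $\Gamma_\infty=\lim\langle u_n\rangle+\lim\langle v_n\rangle$ is precisely what can fail in the degenerate cases: the row $bv_n+\ZZ u_n$ ($b\neq 0$) re-enters any bounded neighbourhood of the origin, at height approximately $-bY_n$ where
\[
Y_n=\frac{\IM(u_n)}{\RE(u_n)}\RE(v_n)+|\IM(v_n)|,
\]
i.e.\ exactly the quantity of your first bullet. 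If $Y_n$ stayed bounded one would obtain a rank-two lattice in the limit, not $x\ZZ$; so one must prove $Y_n\to\infty$, and this is where the minimality of $j\mapsto\|u_{n,j}\|_\infty$ at $j_n,j_n+1$ is indispensable (it forces, for instance, $|\IM(u_{n,j_n-1})|\geq\|v_n\|_\infty$, and together with the bound $\RE(v_n)\geq\frac{\RE(u_n)}{\IM(u_n)}(\RE(u_n)-\IM(u_n))\to\infty$ extracted from the empty regions of Figure \ref{fig:ultimatecases}, this kills all rows with $b\neq 0$). Your preamble correctly names the triangle-emptiness/reducedness as ``the key rigidity,'' but you never actually deploy it in the one case where it carries the whole weight of the proof; your transversality argument for the generic case does not apply here because the directions of $u_n$ and $v_n$ pinch together. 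As written, the step would fail: without invoking minimality quantitatively, nothing prevents $Y_n$ from converging to a finite value, and then $\Gamma_\infty\neq x\ZZ$.
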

\begin{proof}
The first part follows easily from the minimality of the generators $(u_n,v_n)$.
The two cases $t_u=t_v=0$ and $t_u=+\infty$, $t_v=-\infty$ are similar; let us prove only the result for the former case.

If $t_u=t_v=0$ and $u_n\to 0$, draw the line passing through 0 and $u_n$, and consider its intersection $iy_n$ with the vertical axis.
It is easy to see that $y_n=\frac{\IM(u_n)}{\RE(u_n)}\RE(v_n)+|\IM(v_n)|$, and since $u_n\to 0$ and $t_u=0$, we conclude that $\Gamma_\infty= iy\ZZ+\RR$.

If $t_u=t_v=0$ and $u_n\to x$ with $x>0$, then consider Figure \ref{fig:ultimatecases}.
By the minimality of the generators $(u_n,v_n)$ for the max norm, there can not be any element of $\Gamma_n = \langle u_n,v_n \rangle$ in the two left yellow squares.
As a consequence, there can not be any element of $\Gamma_n$ in any of the yellow-shaded region. 
Now $|\IM(v_n)|<\IM(u_n)$, so $\RE(v_n)$ must be bigger than the real part of the purple point, which is easily seen to be $\frac{\RE(u_n)}{\IM(u_n)}(\RE(u_n)-\IM(u_n))$.
Since $u_n\to x>0$ we conclude that $\Gamma_\infty= x \ZZ$, and we are done for all cases.
\end{proof}

\begin{figure}[ht]
\begin{center}
\includegraphics[scale=0.3]{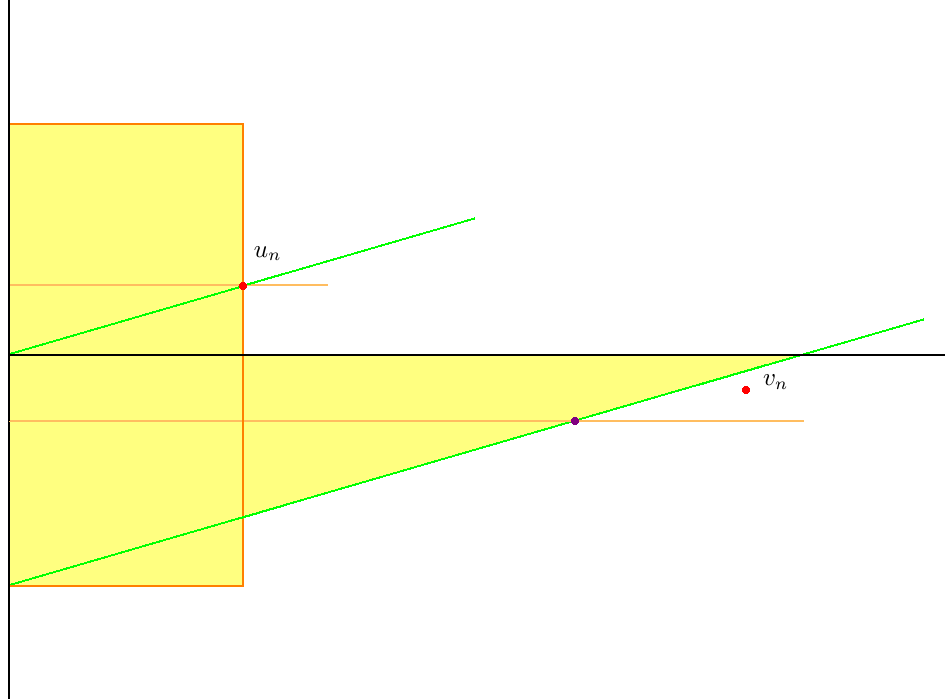}
\end{center}
\caption[One interesting deformation of $\ZZ^2$]{If $u_n\to x$ with $x\in \RR$, $x>0$, then $\RE(v_n)\to \infty$.}
\label{fig:ultimatecases}
\end{figure}

\pagebreak

\section{Local models for $\CS_2$}
\label{sec:pieces}
We would like now to provide local models for neighborhoods of elements in $\CCC_2$.
Recall that the space of non-trivial non-parabolic elements of $\CS_2$ is homeomorphic to $\Theta\times (\CCC(\CC^*)\setminus 1)$ (see Proposition \ref{prop:nonpar}).
Since we described geometrically $\CCC(\CC^*)$ in Subsection \ref{subsec:pict}, we have a clear enough picture of what a neighborhood of a non-trivial non-parabolic element of $\CS_2$ looks like.

Note that as this space accumulates to $\PS_2$, we face the situation of a 6-dimensional space accumulating on another 6-dimensional space. 
We expect spiraling behaviors of some sort; next subsection is an attempt to make this precise.

\subsection{Dichotomy of accumulation behavior} 

%To know how the sequence of groups in $\HS, \EHS_b, \EHS_c, \EHS_d$ converge to a group in $\PS$, we should confront the situation that a 6-dimensional space accumulates on another 6-dimensional space.
 Even though it might be hard to classify all the detailed cases of accumulation behaviors in general, there is one simple nice dichotomy for the case when an $n$-dimensional space $X$
accumulates to another $n$-dimensional space $Y$ (say $X$, $Y$ metric spaces). 
Let $p\in Y$ be a limit point of $X$. Then either there is a continuous path $\gamma : [0,1] \to X \cup Y$ such that $\gamma([0,1)) \subset X$ and $\gamma(1) = p \in Y$, or there is no such a path.
Otherwise put, either for every neighborhood $U$ of $p$ in $X \cup Y$ the arcwise-connected component of $U$ containing $p$ contains an element of $X$, or for every neighborhood $U$ of $p$ in $X \cup Y$ the arcwise-connected component of $U$ containing $p$ contains no element of $X$.
%The latter case could be characterized in another way: there is a neighborhood $U$ of $p$ in $X \cup Y$ such that $U \cap Y$ is a countable union of disjoint components, each of which is homeomorphic to $U \cap X$. 
We would like to reserve the term ``spiraling of $X$ toward $Y$'' for the latter behavior, since it is similar to $[1,\infty)\subset\RR$ accumulating onto $S^1$ via $x\mapsto (1-\frac{1}{x})e^{ix}\in \CC$.
We do not think that this terminology is standard.

Let us see an example in dimension 2 showing the two different situations. See Figure \ref{fig:dichotomy}. 
\begin{figure}[ht]
\begin{center}
\includegraphics[scale=0.36]{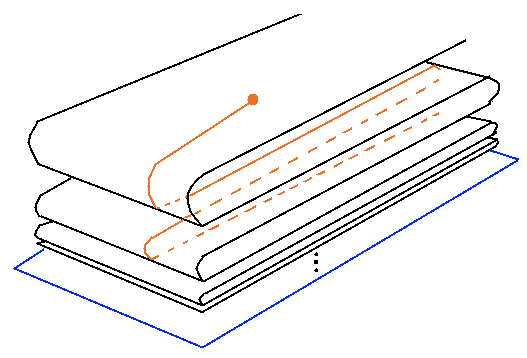}
\includegraphics[scale=0.36]{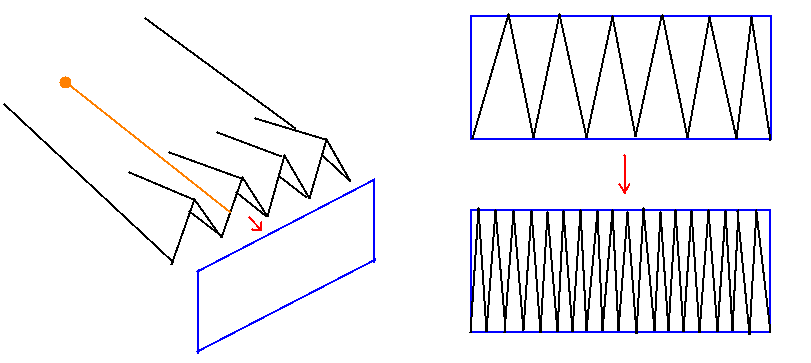}
\end{center}
\caption[A 2-dimensional example of the dichotomy ]{In both pictures, a 2-dimensional space $X$, in black, is accumulating onto a 2-dimensional space $Y$, 
in blue. In the picture below, we only show the approximation of a pleating of one end of $X$. It looks like an accordion with more and more pleating. As a limit of this process, 
$X$ finally accumulates onto a the square $Y$. The fundamental difference between these two cases is captured by the orange paths. In the first case, if you pick a point in $X$, represented as an orange dot in the picture, there is no finite path starting at this point and reaching $Y$. But such a path exists in the second case. }
\label{fig:dichotomy}
\end{figure} 

In fact, a mixture of the two behaviors shown in Figure \ref{fig:dichotomy} could arise. Still, the dichotomy holds: whenever these two behaviours appear at the same time, we can find a continuous path starting at a point of $X$ and reaching $Y$. 
%We will simply call the case when there is a finite path non-spiraling and other case spiraling. Though the terms are not literally correct, one should have an idea what information we have. 

For notational convenience, define $X$ to be the space of all non-trivial non-parabolic closed abelian subgroups of $\PC$. 
In the following subsections, we will prove in particular the following theorem. 

\begin{thm} 
\label{thm:DichotomyParabolics}
Let $G$ be a group in $\PS_2$. Then 
\begin{itemize}
 \item[1.] if $G$ is isomorphic to $\ZZ^2$, then $X$ accumulates towards $G$ in a spiraling way. 
 \item[2.] if $G$ is not, $X$ accumulates toward $G$ in a non-spiraling way. 
\end{itemize} 
\end{thm}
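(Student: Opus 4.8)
The plan is to translate the abstract topological dichotomy into concrete statements about the parametrization of $X$ by $\Theta\times(\CCC(\CC^\ast)\setminus 1)$ and the structure of $\PS_2$ as (the non-identity part of) $S^2\times(\CCC(\CC)\setminus 0)$ established in Section \ref{sec:pieces} and Corollary \ref{cor:p2}. First I would recall that, by Theorems \ref{thm:reducingargument1} and \ref{thm:reducingargument2} together with Remark \ref{rmk:translationtable}, a sequence $G_n\in X$ converges to a parabolic group $G\in\PS_2$ precisely when $R_n\to\infty$ and $\Gamma_n=R_n\Log(\Xi_n)\in\WW(2\pi R_n)$ converges (after rescaling by $2\pi R_n$ as in Subsection \ref{subsec:lastcase}) to the subgroup $\Gamma\subset\CC$ with $e^{i\omega_\infty}\Gamma=$ (the group of translation quantities of $G$), while the fixed point data $[\zeta_\infty:\xi_\infty]$ records the base point $z\in S^2$. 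So the two accumulation regimes are governed by \emph{which} closed subgroups $\Gamma\subset\CC$ arise as such limits and by the path-connectivity of the corresponding preimage strata inside $X$.

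The core of the argument is then a case analysis on the isomorphism type of $\Gamma$ (equivalently of $G$), exactly along the list in Lemma \ref{lem:subCl}. For part 2, I would exhibit, for each $G\not\cong\ZZ^2$, an explicit continuous path $\gamma:[0,1]\to X\cup\PS_2$ with $\gamma([0,1))\subset X$ and $\gamma(1)=G$. The natural choice keeps the base point $[\zeta:\xi]$ fixed, keeps $\omega$ fixed, and deforms the non-parabolic data toward $G$: concretely, one takes fixed points $[\zeta:\xi]$ and $f_t$ with $\Arg(f_t)$ constant and the spherical distance $1/R_t\to 0$ as $t\to 1$, together with a group of translation quantities $\Xi_t$ chosen so that $R_t\Log(\Xi_t)$, suitably rescaled, is literally constant equal to $\Gamma$ (this is possible because $\Gamma$ of type $C_x$, $C_\infty$, $\CC$, or $D_t^{l/m}$ is a \emph{fixed} closed subgroup of $\CC$, so there is no ``winding'' forced by the continued-fraction obstruction of Subsection \ref{subsec:lastcase}). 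Running Theorem \ref{thm:reducingargument1} along this path shows $\gamma(t)\to G$ continuously. The point is that when $G\not\cong\ZZ^2$ the limiting subgroup $\Gamma$ contains a line, so approximating it by lattices does not require sending a pair of generators along wildly different speeds; one degrees of freedom ($R_t$) suffices.

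For part 1, I would argue the converse: when $G\cong\ZZ^2$, i.e. $\Gamma$ is a genuine lattice $B_z^{l/m}$, any sequence $G_n\in X$ converging to $G$ must have $R_n\to\infty$ and, by Lemma \ref{r2limits} and Proposition \ref{prop:cor2contfrac}, the generator $\theta_n$ of the rescaled lattice $\Gamma_n$ must tend to a \emph{rational} $\theta=p/q$ with the auxiliary slope $t_n=\frac{2\pi}{x_n}(\theta_n-\theta)$ bounded; moreover, as we vary continuously inside $X$ near $G$, the ``layer index'' $\lcm(m,q)$ of the limit can only change by passing through $\Gamma=\CC$ (the wedge point of every $D$-bouquet, cf. Subsection \ref{subsec:pict}). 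Thus a continuous path in $X$ that stays near $G$ is confined, in the $\CCC(\CC^\ast)$-coordinate, to a single layer $L_m$ pinched at its cylinder end; the ``$B_z^{2\pi/m}$-cylinder'' end of $L_m$ shows that reaching $\Gamma=B_z^{2\pi/m}$ forces $x\to0$ while $y=\IM(z)$ hits a rational $2\pi p/q$, and the blow-up/pinching construction means every neighborhood of $G$ meets $X$ only in arcwise components not containing $G$ itself — this is exactly the $[1,\infty)$-spiraling-onto-$S^1$ picture, lifted by the continued-fraction parametrization. Making the ``arcwise component'' statement precise is the main obstacle: one must show that no clever path using the extra parameters in $\Theta$ (moving the fixed points, hence $R$ and $\omega$) can evade the pinching. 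I would handle this by noting that the $\Theta$-coordinate is locally a product factor near any point of $\PS_2$ whose base point $z\in S^2$ is interior to a trivialization chart $\Dp$ (and by $S^2$-homogeneity of $\PS_2$ this loses no generality), so the path's component in $X$ projects to a path in $\CCC(\CC^\ast)\setminus 1$ approaching the $B_z^{2\pi/m}$-point, and there the intrinsic spiraling of the $m$th layer obstructs any finite connecting arc. Assembling these observations gives the theorem.
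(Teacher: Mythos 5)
There is a genuine gap in both halves, and the key mechanism of the paper's proof is missing. For Part 1, the obstruction the paper uses is not the layer/pinching structure of $\CCC(\CC^\ast)$ but the \emph{marked point}: every $G_t\in X$ near $G$ carries the distinguished element $\delta(t)=2i\pi R(t)e^{i\omega(t)}\in\Gamma_t$ (coming from $1\in\Xi_t$), which must blow up to $\infty$ as $G_t$ approaches a parabolic group. Along a hypothetical path with $\Gamma_t$ a lattice converging to the lattice $\Gamma$, one writes $\delta(t)=k_1(t)g_1(t)+k_2(t)g_2(t)$ for continuously chosen generators $g_i(t)\to g_i$; the integer-valued functions $k_i(t)$ are continuous on a connected interval, hence constant, so $\delta(t)$ converges to the finite point $k_1g_1+k_2g_2$ --- contradiction. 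Your argument instead appeals to the pinching picture of Subsection \ref{subsec:pict}, but that picture describes degenerations \emph{inside} $\WW(2\pi)$ with $l$ fixed, where $B_z^{2\pi/m}$ is an interior point of the (open) lattice locus and nothing spirals onto it; the parabolic limit lives at $l=2\pi R\to\infty$, which is a different degeneration, and ``the intrinsic spiraling of the $m$th layer'' does not by itself rule out a connecting arc. You flag this step as ``the main obstacle'' and then do not close it.

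For Part 2, your claim that ``when $G\not\cong\ZZ^2$ the limiting subgroup $\Gamma$ contains a line'' is false precisely in the two cases that require the most work: $G\cong\ZZ$ (where $\Gamma=u\ZZ$) and $G=\{1\}$ (where $\Gamma=\{0\}$). In those cases one cannot keep $\Gamma_t$ constant, nor even have the approximating data converge ``in one degree of freedom'': one must let a second generator $g_2(t)\to\infty$ \emph{and} arrange $\delta(t)=k_1g_1(t)+k_2g_2(t)$ with $k_2\neq 0$ so that the marked point escapes to infinity along with it (this is exactly why, in the paper's case (1), the components with $k_2=0$ do \emph{not} reach $G$ while those with $k_2>0$ do). Even for the line-containing limits, ``$R_t\Log(\Xi_t)$ literally constant'' is only consistent with the constraint $2i\pi R_t\in\Gamma_t$ when $\Gamma\supset i\RR$ (types $C_x$, $i\RR$, $\CC$); for a limit such as $iy\ZZ+\RR$ the approximating subgroups must genuinely move. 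So the case analysis needs to be redone with the marked-point bookkeeping made explicit throughout.
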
 

With this is mind, let us now describe some models for neighborhoods $G$, depending of its isomorphy type.
%Globally, this result is not as complete as our previous result in $\PR$. But at least locally, this will give us a complete description of the topology of the space.  

\subsection{The spiraling case} 
 In this subsection, we prove the first case in Theorem \ref{thm:DichotomyParabolics}.
The intuition in this proof and the ones in Subsection \ref{subsec:nonspiraling} come from the following remark: non-parabolic groups in $\CS_2$ are specified by two fixed points and by a closed subgroup containing some element $2i\pi R_n e^{i\omega_n}$ with $R_n\ge 1$ (see Subsection \ref{subsec:cylinders}); alternatively, they can be specified by the giving of \textit{one} fixed point, and by a marked closed subgroup of $\CC$, the marking being of norm $\ge 2\pi$.

Thanks to the reducing argument Theorem \ref{thm:reducingargument1}, we can just work in $\mathcal{C}(\CC)$.
Let $G$ be a parabolic group in $\CS_2$ isomorphic to $\ZZ^2$. By symmetry, we can assume without loss of generality that its fixed point is $(0,1)\in \Dp$ (see Subsection \ref{cp1}).
Assume there is a path $t\in [0,1] \mapsto G_t\in \mathcal{C}(\CC)$ such that $G_t\in X$ for every $t\in [0,t)$, and with $G_1 = G$. We would like to find a contradiction.

For all $t$, we have a pair of fixed points $(\zeta_1(t), \xi_1(t)), (\zeta_2(t), \xi_2(t))\in \Dp$, converging each to $(0,1)$.
Set $\delta(t) = 2i\pi R(t) e^{i \omega(t)} =  \dfrac{1}{\zeta_2(t)\xi_1(t) - \zeta_1(t)\xi_2(t)}$, and $\Gamma_t=  R(t) e^{i \omega(t)}  \Log \Xi(t)\subset \CC$, $\Xi_t$ being the subgroup of 
rotation/complex translation quantities of elements of $G_t$.

 Then by definition, each $\Gamma_t$ contains the point $\delta(t)$; since $G_1=G$ is parabolic and all the maps defined insofar are continuous on the parameter $t$, $\delta(t) \to \infty$ as $t \to 1$.
Also, the continuous path $ t \mapsto \Gamma_t$ verifies that $\Gamma_1 = \Gamma$ is the subgroup of translation quantities of $G$, for the normalization defined for elements in $\PS_2$ with fixed point inside $\Dp$ (see Subsection \ref{subsec:parabolic}).
The space of lattices being an open subset of $\mathcal{C}(\CC)$, we may assume that $\Gamma_t$ is a lattice for any $t \in [0,1]$. 
 
% cases, we will use the fact that $2 i \pi \delta$ is a point in a given subgroups of $\CC$ and where in the group the $2 i \pi \delta$ will be an important data. Especially it is crucial to know if 
% two possible positions for the point $2 i \pi \delta$ could be deformed continuously from one to the other.  
 Let $g_1, g_2$ be generators of $G$. Then we can define generators $g_1(t), g_2(t)$ of $\Gamma_t$ so that $g_1(t) \to g_1$ and $g_2(t) \to g_2$ as $t \to 1$. 
 For small $\epsilon > 0$, let $N_{\epsilon}$ be a neighborhood of $\Gamma$ in $\CCC(\CC)$ such that  the following holds: $\Gamma_t$ lies in $N_{\epsilon}$ if and only if 
 $|g_1 - g_1(t)| < \epsilon$ and $|g_2 - g_2(t)| < \epsilon$.  
 For each $t$, there are some integers $k_1(t), k_2(t)$ such that  $k_1(t)g_1(t) + k_2(t)g_2(t) = \delta(t)$.
But since $t\mapsto k_i(t)$ is continuous and $[0,1]$ is connected, $k_1, k_2$ are constant maps. This contradicts the fact that $\delta$ blows up to $\infty$ when approaching 1. 

Hence such a continuous path $t\mapsto G_t$ cannot exist, and we are done for the proof of Theorem \ref{thm:DichotomyParabolics} in the first case.

Moreover, the present proof shows that each choice of $k_1$, $k_2$ corresponds to a choice of connected component in a neighborhood of the lattice $\Gamma$.
Such a choice also sums up to a choice of $\delta$. In the view of geometric interpretations of $R$ and $\omega$ given in Subsection \ref{subsec:geometricviewRomega}, we would like to visualize the possible choices of $1/\delta$ instead.
Figures \ref{fig:nearz2} and \ref{fig:antipodalgluing} below describe the situation. 

 \begin{figure}[ht]
\begin{center}
\includegraphics[scale=0.36]{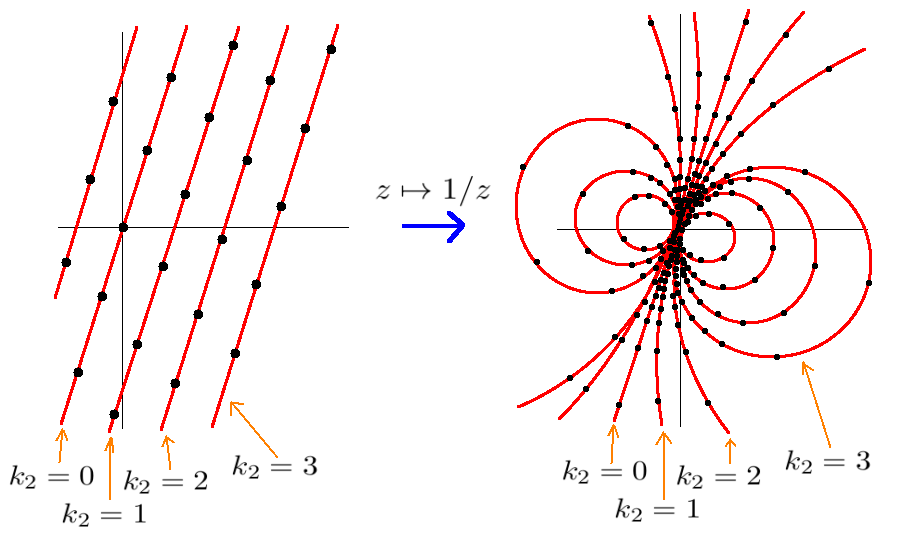}
\end{center}
\caption[How one can see the neighborhood of $\Gamma$ of type $\ZZ^2$ (1) ]{Each dot corresponds to a choice of $k_1, k_2$, or equivallently to a possible location of  $\delta$.
In the left picture, the red lines represent different choices for $k_2$ and on each line, there are dots corresponding to choices for $k_1$.
 On the right, we can see the possible locations of $1/\delta$.}
\label{fig:nearz2}
\end{figure}

In Figure \ref{fig:nearz2}, one should imagine that around each dot, there is a little ball of dimension 6 corresponding to a choice for a first fixed point $f_1$ close to 0, and choices for two generators $g_1'$, $g_2'$ $\epsilon$-close to $g_1$ and $g_2$ respectively. 
The second fixed point $f_2$ is not arbitrarily, since $\delta/2i\pi$ gives the  position of $f_2$ relative to $f_1$.

These quantities describe every non-parabolic group in $\CS_2$ that is close to $G$ exactly twice, because $f_1$, $g_1'$, $g_2'$, $k_1$, $k_2$ and $f_2$, $g_1'$, $g_2'$, $-k_1$, $-k_2$ represent the same group.
Therefore, to get a correct picture of a neighborhood of $G$, we have to forget about the circles on one side of the second picture in Figure \ref{fig:nearz2}, and to glue the remaining line with itself, as in Figure \ref{fig:antipodalgluing}.
 
\begin{figure}[ht]
\begin{center}
\includegraphics[scale=0.36]{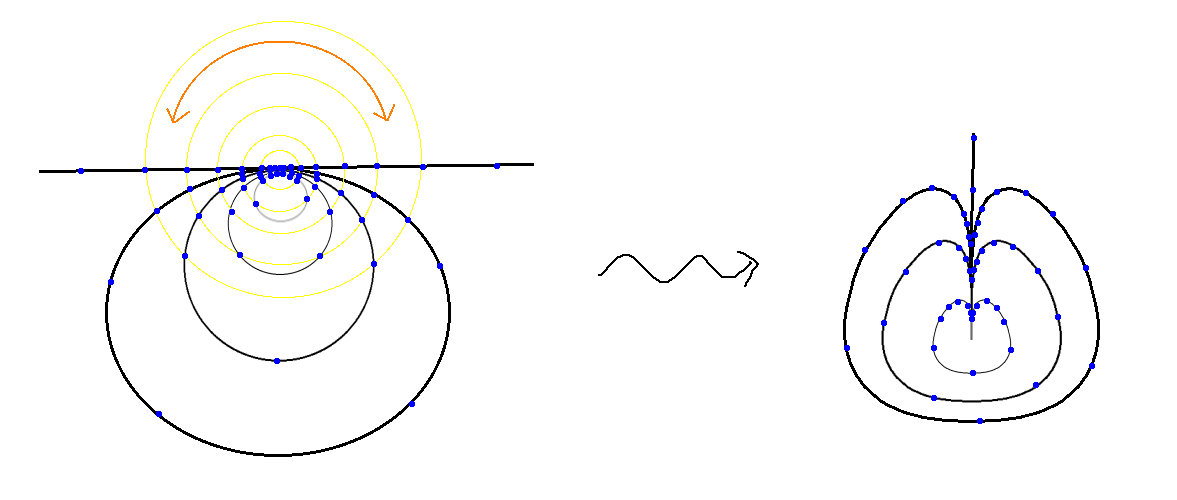}
\end{center}
\caption[How one can see the neighborhood of $\Gamma$ of type $\ZZ^2$ (2) ]{To get a correct model for a neighborhood of $P$, take the pictures of Figure \ref{fig:nearz2} and perform the gluing indicated. Also, replace the black dots by 6-dimensionnal balls.}
\label{fig:antipodalgluing}
\end{figure}

 \subsection{Non-spiraling cases} 
 \label{subsec:nonspiraling}
There are several subcases that we would like to investigate now.
 \begin{itemize}
 \item[(1)] $G$ is parabolic, isomorphic to $\ZZ$,
 \item[(2)] $G$is parabolic, isomorphic to $\RR \times \ZZ$,
 \item[(3)] $G$ is parabolic, isomorphic to $\RR$,
 \item[(4)] $G$ is parabolic, isomorphic to $\CC$, 
 \item[(5)] $G$ is the trivial subgroup $\{1\}$. 
 \end{itemize} 
  
  (1) Let $G$ be a parabolic group in $\CS_2$ isomorphic to $\ZZ$. We can assume without loss of generality that its fixed point is $(0,1)\in \Dp$ and that its associated subgroup $\Gamma$ is $\ZZ\subset \CC$.
    For small $\epsilon > 0$, let $N_{\epsilon}$ be the neighborhood of $\Gamma$ in $\CCC(\CC)$ consisting of the subgroups $g_1 \ZZ$ and of the lattices $\langle g_1, g_2\rangle$ with $|1-g_1|<\epsilon$ and $g_2\in \{|\RE(z)|<1\text{ and }\IM(z)>1/\epsilon\}$.
    Take $G_0$ in a little neighborhood $U$ around $G$ in $\CS_2$ for which every element has associated subgroup in $N_\epsilon$.
 Write $\Gamma_0$ for the associated subgroup of $G_0$; it equals either $g_1(0) \ZZ$ or $\langle g_1(0), g_2(0)\rangle$.
An argument as above would show easily that if $\delta(0)=k_1 g_1$ then there are no continuous paths $t\mapsto G_t\in U$ such that $G_1=G$.
Moreover, each different choice for $k_1>0$ corresponds to a different connected component for $U$.

Now if $\delta(0)=k_1 g_1+k_2 g_2$ with $k_2>0$, then the two paths 
\[
 t\mapsto \langle (1-t)g_1+t, (1-t)g_2+2it/\epsilon \rangle
\]
and 
\[
 t\mapsto \langle (1-t)g_1+t, g_2+\frac{i}{1-t} \rangle
\]
show that each different choice for $k_2>0$ (no matter what $k_1 $ is) corresponds to at least one connected component for $U$, that intersects $\PS_2$ non-trivially.
Since $k_2(t)$ is easily seen to be locally constant, it must be constant.

Therefore the whole picture in that case is similar to the right one in Figure \ref{fig:antipodalgluing}, but with each circle bounding a disk (corresponding to the region where $k_2 g_2$ can be), and with the interiors of these disks disjoint. 
Hence we obtain something looking like a  lollipop with infinitely many layers! See Figure \ref{fig:nearz} below.

\begin{figure}[ht]
\begin{center}
\includegraphics[scale=0.30]{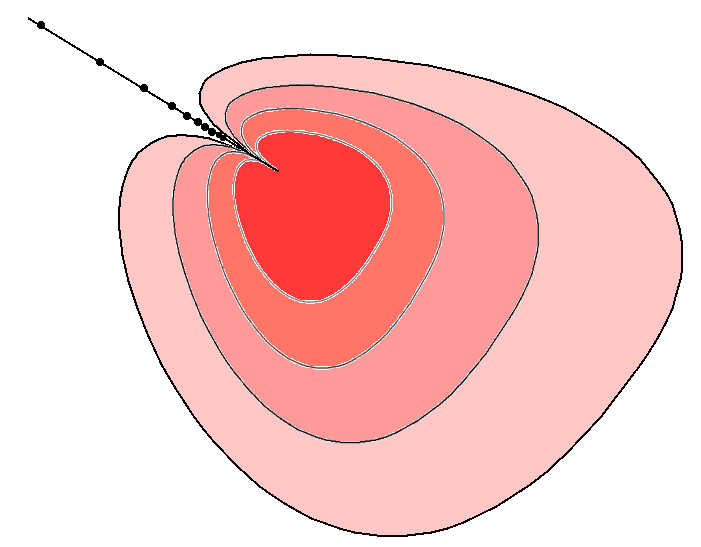}
\end{center}
\caption[How one can see the neighborhood of $\Gamma$ of type $\ZZ$]{ Here each colored region is a disk with one cusp point on the boundary which points inward. Those cusped disks have disjoint interiors and are attached to each other at the cusp
point. As in Figure \ref{fig:nearz2}, this should be thickenned into a 6-dimensionnal object. In particular, the cusp point should actually represent a whole neighborhood of $G$ in $\PS_2$.}
\label{fig:nearz}
\end{figure}  
  
\begin{rmk}
 Note that each layer corresponding to a chosen $k_2>0$ is of dimension 6, and accumulates onto $U\cap \PS_2$, which is of dimension 6 also.
This accumulation looks much like the accordion example provided above, where here the flat direction is the vertical one for $g_2\to \infty$ (used in the proof), and the pleated one is the horizontal one, as we see by considering
\[
 t\mapsto \langle (1-t)g_1+t, g_2+\frac{1}{1-t}, \rangle
\]
which leads to a path $\delta$ with $\delta(t)\to \infty$ but approaching every parabolic point having associated subgroup $\langle 1, \IM(g_2)+u \rangle$, $u$ in $\RR$.
\end{rmk}

  (2) 
Let $G$ be a parabolic group in $\CS_2$ isomorphic to $\ZZ\times \RR$. We can assume without loss of generality that its fixed point is $(0,1)\in \Dp$ and that its associated subgroup $\Gamma$ is $i\ZZ+\RR$.
    For small $\epsilon > 0$, let $N_{\epsilon}$ be a neighborhood of $\Gamma$ in $\CCC(\CC)$ consisting of the subgroups $\langle g_1, g_2\rangle$ with $|g_1|<\epsilon$, $|\Arg(g_1)|<\epsilon$ and $|i-g_2|<\epsilon$, and of subgroups of $\CC$ isomorphic to $\ZZ\times\RR$ and close enough to $\Gamma$.
Now, as above, let $U$ be a neighborhood of $G$ for which every element has associated subgroup in $N_\epsilon$.

Discussions as before show that lattice subgroups in $N_\epsilon$ with a choice $\delta(0)= k_1 g_1 + k_2 g_2$, $k_2>0$ each correspond to one connected component of $U$ that contains all $U\cap \PS_2$. 
The novelty in the case $k_2=0$ is that $g_1$ can be made to converge to 0. In the process, $\delta(t)=k_1g_1(t)$ has to get close to 0 also, thus $G_t$ needs to exit $U$ at some point.
Thus a local model should look like in Figures \ref{fig:nbhdofRZ1} and \ref{fig:nbhdofRZ2}.

\begin{figure}[ht]
\begin{center}
\includegraphics[scale=0.37]{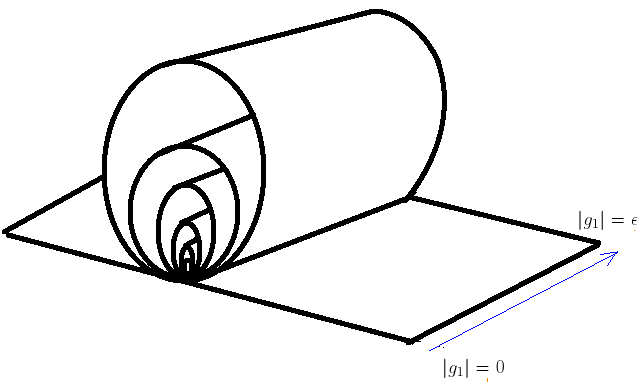}
\includegraphics[scale=0.37]{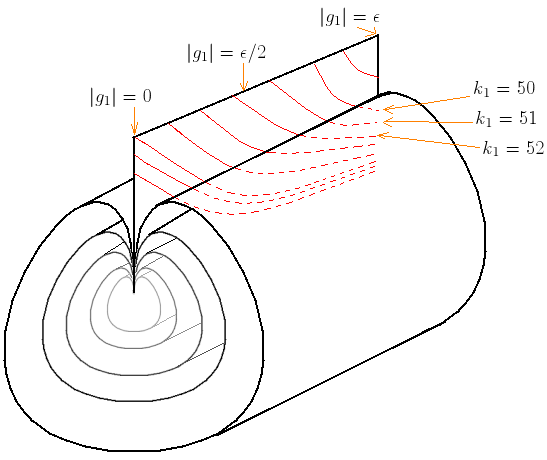}
\end{center}
\caption[How one can see the neighborhood of $\Gamma$ of type $\ZZ \times \RR$ ]{ Left: local model around a parabolic group isomorphic to $\ZZ\times \RR$, viewed as the drawing from Figure \ref{fig:nearz2} cross a short interval representing the possible values of $|g_1|$. The flat part where the cylinders are attached is not filled: it is laminated; this lamination is described in more details in Figure \ref{fig:nbhdofRZ2}. Right: same drawing, with the identification explained in Figure \ref{fig:antipodalgluing}.}
\label{fig:nbhdofRZ1}
\end{figure}  

\begin{figure}[ht]
\begin{center}
\includegraphics[scale=0.43]{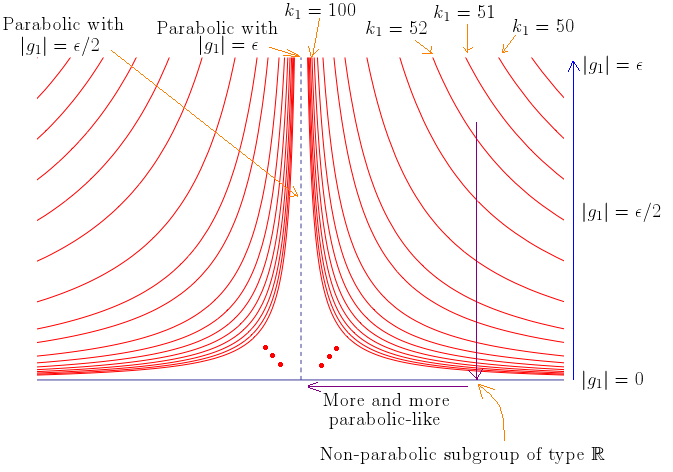}
\end{center}
\caption[How one can see the neighborhood of $\Gamma$ of type $\ZZ \times \RR$ ]{ 
Here we see the flat part of Figure \ref{fig:nbhdofRZ1}.}
\label{fig:nbhdofRZ2}
\end{figure}  
%The cross-section for each values of $|g_1|$ should look like the one in Figure \ref{fig:nearz2}. But one should note that on the straight line where the circles are attached on both sides, the dots should be more densely distributed for smaller $|g_1|$. In particular, each point on the entire line for $|g_1| = 0$ realizes a group in the neighborhood of $\Gamma$. \\

  (3) The case where $G$ is isomorphic to $\RR$ is similar to the case (2) but slightly more complicated, since now it is possible to approach $\Gamma$ by groups of type $\ZZ$.  
  The difference between this case and the previous case is similar to the one between Figure \ref{fig:antipodalgluing} and Figure \ref{fig:nearz}. 
  Namely, one should fill in the cylinders to make them into solid cylinders (but disjoint). See Figure \ref{fig:nbhdofR}.
  The repartition of the groups on the flat part is similar.

\begin{figure}[ht] 
\begin{center}
\includegraphics[scale=0.37]{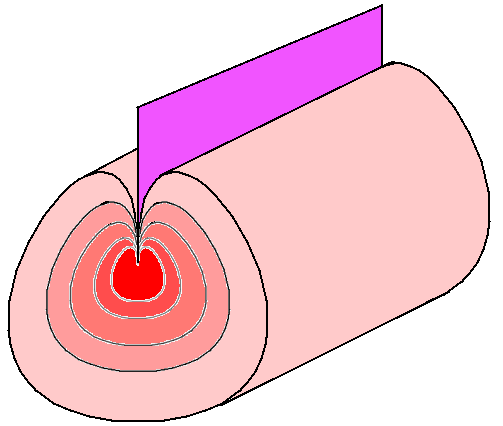}
\end{center}
\caption[How one can see the neighborhood of $\Gamma$ of type $\RR$ ]{Solid lollipop: a local model around a parabolic group isomorphic to $\RR$.}
 \label{fig:nbhdofR}
\end{figure}

The cases (4) is somewhat wilder. Indeed, it concerns subgroups of $\CS_2$ isomorphic to $\RR$; these include in particular copies of the neighborhood of the wedge point of the $D$-bouquet in the Chabauty space of $\CC^*$
%; as for case (5), every neighborhood of $\{\Id\}$ in $\CS_2$ contains groups of any pair/singleton of fixed points. 
We will only attempt to show that this case possesses arbitrary small neighborhoods $U$ such that both $U$ and $X\cap U$ are arcwise connected; this statement somewhat represent an ultimate non-spiraling behavior.

  %Let's start by describing the $\epsilon$-neighborhood $N_{\epsilon}$ of $\Gamma = \{0\}$. A lattice $\Gamma' = <g_1, g_2>$ is in $N_{\epsilon}$ if and only if the max norm of both 
  %$g_1, g_2$ are larger than $1/\epsilon$. As usual, $2 i \pi \delta = k_1 g_1 + k_2 g_2$.  
  
%\mbox{}

  (4) Any parabolic group isomorphic to $\CC$ possesses arbitrary small neighborhoods $U$ such that both $U$ and $X\cap U$ are arcwise connected.
To show that, notice that it is sufficient to prove that given any subgroup $\Gamma_0$, with marked point $\delta\neq 0$, close enough to the total subgroup $\Gamma_1=\CC$ (with marked point $\delta$), we can find a path $t\mapsto \Gamma_t$ such that $t\mapsto d(\Gamma_t, \Gamma_1)$ is decreasing (with $d$ the Chabauty metric on $\CCC(\CC)$), and verifying $\delta\in \Gamma_t$ for all $t$.

First, consider $\Gamma_0$ to be a subgroup of $\CC$ of type $\RR \times \ZZ$. 
Define the spacing $\varphi$ to be the distance between two consecutive copies of $\RR$ in $\Gamma_0$.

If $\delta$ is on the copy $L_1$ of $\RR$ in $\Gamma_0$ passing through $0$ 
then we can choose $t\mapsto \Gamma_t$ with $G_t$ the subgroup of $\CC$ isomorphic to $\RR \times \ZZ$,
containing $L_1$ and with spacing $(1-t)\varphi$.

Else, let $L$ be the line passing though $0$ and $\delta$; in this case, performing a continuous rotation of $\Gamma_0$ around 0 such that the angle between $L$ and the line through 0 in $\Gamma_t$ continuously decreases to 0, we again obtain that $\Gamma_t$ converges to $\CC$ without changing
$\delta$. In both cases, we do not have to move the fixed points, and the subgroups $G_t$ verify that $t\mapsto d(\Gamma_t, \CC)$ is decreasing.

Now consider $\Gamma_0$ to be a lattice. Let $g_1 \in \Gamma_0$ be a primitive
element so that $\delta = k g_1$ for some $k \in \NN$. Choose $g_2 \in
\Gamma_0$ so that $g_1, g_2$ generate $\Gamma_0$.
We define a map $f: \CC\times [0,1] \to \CC$ by 
\[f: (xg_1 + yg_2,t) \mapsto xg_1 + (1-t)yg_2.\]
Then obviously $t\mapsto\Gamma_t=f(\Gamma_0 \times \{t\})$
continuously changes $G_0$ into a subgroup of type $\RR \times \ZZ$, 
without changing $\delta$ and with the required property on $d(\Gamma_t, \CC)$.
Thus we can now apply the previous argument to
continuously deform $\Gamma_1$ into $\CC$ in the required way.
This completes the proof.

\mbox{}

  (5) $\{Id\}$ has arbitrary small neighborhoods $U$ such that $U$ is arcwise connected, but $X\cap U$ in not arcwise connected. 

Let's prove the first part of the claim. 
Take $G_0$ to be a subgroup of $\CS_2$ which is of type either $\ZZ$ or a lattice.
Move the fixed point(s) of $G_0$ continuously until one of the two (resp. the only fixed point, in the case where $G_0$ is parabolic) is 0;
note, in the non-parabolic case, that we can make $\delta$ to remain unchanged.

Now, if $G_0$ is of type $\ZZ$, notice that we can always choose a convex path as above from $G_0$ to $G_0+i/\epsilon G_0$ (without moving $\delta$ in the non-parabolic case);
thus, let us suppose that $G_0$ is isomorphic to $Z^2$.

Let $g_1 \in \Gamma_0$ be a primitive element so that $\delta = k g_1$ for some $k \in \NN$. Choose $g_2 \in
\Gamma$ so that $g_1, g_2$ generate $\Gamma_0$.
We define a map $f: \CC\times [0,1] \to \CC$ by 
\[f: (xg_1 + yg_2,t) \mapsto \frac{1}{1-t}xg_1 + yg_2.\]
Then obviously $t\mapsto\Gamma_t=f(\Gamma_0 \times \{t\})$
continuously changes $G_0$ into a \textit{parabolic} subgroup of type $ \ZZ$, since $\delta\to\infty$ in the case where $G_0$ was not already parabolic.

Finally, the family of elements in $\PS_1$ fixing 0 and staying close enough to $\{\Id\}$ in $\CS_2$ is obviously connected and touches $\{\Id\}$. 
This completes the proof of the first claim.

For the second part of the claim, consider points $\Gamma_1$, $\Gamma_2$ in $X \cap U$. Inside each $\Gamma_i$, 
we have $\delta_i = k_i b_i$ with integers $k_i$ and primitive elements $b_i$. 
Call this $k_i$ the multiplicity of $\delta_i$ in $\Gamma_i$. 
Then one can find a continuous path connecting $\Gamma_1$ and $\Gamma_2$ and contained inside $X \cap U$ if and only if $k_1 = k_2$.  
One can show this easily by modifying the argument given above slightly, the only difference here being that $\delta_i$ can not be let to go to $\infty$. 
Hence we have one connected component of $X \cap U$ for each choice of the multiplicity of $\delta$. 

\pagebreak

\section{Summary statement}
\label{sec:future}
The following theorem collects and summarizes all previous results in this paper.
\begin{thm}
The space $X$ of non-trivial non-parabolic closed abelian subgroups of $\PC$ is homeomorphic to 
\[
\Theta \times (\CCC(\CC^\ast)\setminus \{1\})
\]
where $\Theta\cong \CPP \setminus \CP1$ is the space of pairs of points of a 2-sphere (see Subsection \ref{subsec:Theta}) and $\CCC(\CC^\ast)$ is the Chabauty space of $\CCC(\CC^*)$ described in Section \ref{sec:cstar}.
See Proposition \ref{prop:nonpar}.

Moreover, the space of non-trivial discrete cyclic subgroups of $\PC$ generated by an elliptic generator is homeomorphic to 
\[
\Theta \times \NN_{\ge 2}
\] 
and the space of non-trivial discrete cyclic subgroups of $\PC$ generated by a hyperbolic generator is homeomorphic to 
\[
\Theta \times (\CC\setminus \overline{\DD})
\] 
See Proposition \ref{prop:nonpar2}.

The boundary $\PS_1$ of the space of cyclic parabolic elements in $\CS_1\subset \CS_2$ is
the one-point compactification of a 4-twist $\SOO$-bundle of $\overline{\DD}^\ast$ over $S^2$ (see Corollary \ref{cor:p1}). 
It lies inside the space $\PS_2$ of parabolic closed abelian subgroups of $\PC$; $\PS_2$ is homeomorphic to the one-point compactification of $S^2\times\RR^4$ (see Corollary \ref{cor:p2}).

The way $X$ is attached to $\PS_2$ is explained in Section \ref{sec:redlem}, in particular Theorem \ref{thm:reducingargument1}.
See also Subsection \ref{subsec:cylinders} for a geometric interpretation.
How this attachment is performed is similar to the bending described in \cite{BC1}, see Theorem \ref{thm:mainthmofpsl2r}.

The geometric limit of a sequence of cyclic subgroups $H_n$ of $\PC$ with translation quantities $a_n$, converging to a parabolic subgroup, is intimately connected to the expansions in continued fractions of $\theta_n=\Arg(a_n)$; see Section \ref{sec:bcstar}, in particular Subsection \ref{subsec:lastcase}.

Moreover, local models for neighborhoods in $\CS_2$ of parabolic groups $G\in \CS_2$ depend only on the isomorphy type of $G$, as a group.
In the generic case where $G$ is a isomorphic to $\ZZ^2$, $X$ accumulates towards $G$ in a spiraling way.
When $G$ is not, $X$ accumulates toward $G$ in a non-spiraling way (see Section \ref{sec:pieces}).  
\end{thm}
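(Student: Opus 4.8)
The plan is to assemble the statement from the results established in the preceding sections, checking only that the pieces are mutually consistent; essentially every clause is a citation. First I would record the homeomorphism for $X$: the identification $X \cong \Theta \times (\CCC(\CC^\ast) \setminus \{1\})$ is exactly Proposition \ref{prop:nonpar}, whose proof exhibits the explicit map $(\zeta_1,\xi_1),(\zeta_2,\xi_2),\Xi \mapsto \{H_{(\zeta_1,\xi_1),(\zeta_2,\xi_2),a} : a \in \Xi\}$ and verifies that it is a homeomorphism onto its image, while the identification $\Theta \cong \CPP \setminus \CP1$ comes from Subsection \ref{subsec:Theta}, via the map sending a pair of fixed points to the coefficients of the quadratic polynomial vanishing at them. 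The two cyclic statements are Proposition \ref{prop:nonpar2}, obtained by restricting $\Xi$ to $\langle e^{2\pi i/n}\rangle$ (the discrete factor $\NN_{\ge 2}$) or to $\langle a \rangle$ with $|a| \ne 1$ (the factor $\CC \setminus \overline{\DD}$).

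Next I would collect the parabolic chunks. The description of $\PS_1$ is Corollary \ref{cor:p1}, which rests on Proposition \ref{prop:p1} (the clutching map $e^{i\phi} \mapsto e^{4i\phi}$ computed from the two trivializations over $\Dp$ and $\Dm$) together with the fact that the closure of the space of discrete subgroups of $\CC$ is a closed disc. Likewise $\PS_2$ is Corollary \ref{cor:p2}, depending on Proposition \ref{prop:p2}, where the clutching map $e^{i\phi} \mapsto [(a,b) \mapsto (e^{8i\phi}a, e^{12i\phi}b)]$ is null-homotopic in $\SO$ because it is the double of a loop and $\SO$ has the simply connected double cover $S^3 \times S^3$, and on the identification $\CCC(\CC) \cong S^4$ of \cite{PourHubb}. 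The attachment of $X$ to $\PS_2$ is the content of Section \ref{sec:redlem}: Theorem \ref{thm:reducingargument2} treats $R_\infty < \infty$ (non-parabolic limit) by one application of the Reduction Lemma, and Theorem \ref{thm:reducingargument1} treats $R_\infty = \infty$ (parabolic limit) by two applications, reducing to the study of $\WW(2\pi R_n)$ as $R_n \to \infty$; the continued-fraction description of limits of cyclic hyperbolic subgroups converging to a parabolic is Subsection \ref{subsec:lastcase}, in particular Proposition \ref{prop:cor2contfrac} and the concluding proposition of that subsection. The local-model and spiraling-dichotomy assertions are Theorem \ref{thm:DichotomyParabolics}, proved there by the path-obstruction argument (constancy of the integer coefficients $k_1,k_2$ expressing $\delta$ in a fixed basis of the limiting lattice).

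The one point that is not a pure matter of citation — and the thing I expect to require the most care — is coherence of the gluing: one must check that the homeomorphisms of the several chunks ($X$, $\PS_1$, $\PS_2$, and the $\EHS_m$) restrict compatibly on their shared boundaries, so that the separate descriptions patch into one well-defined topological space. Concretely this means tracking the normalizations of $H_{(\zeta_1,\xi_1),(\zeta_2,\xi_2),a}$ and of the parabolic matrices through the trivializations over $\Dp$ and $\Dm$, and confirming that the limit groups computed in Theorems \ref{thm:reducingargument1} and \ref{thm:reducingargument2} land exactly at the parabolic parametrizations used in Section \ref{sec:pieces}; the cylinder picture of Subsection \ref{subsec:cylinders} makes this last compatibility transparent. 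Once these identifications are checked to agree on overlaps, the summary theorem follows by concatenating the cited results.
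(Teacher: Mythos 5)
Your proposal is correct and matches the paper's approach exactly: the summary theorem is proved in the paper purely by citation of the earlier results (Propositions 2.4 and 2.5, Corollaries 2.9 and 2.11, Theorems 3.3 and 3.6, Subsection 5.2, and Theorem 6.1), and you have identified each constituent correctly. Your closing remark about checking compatibility of the parametrizations on overlaps is a reasonable extra precaution, but the paper itself does not carry out any such verification beyond what is already contained in the cited results.
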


% Even though we have studied each chunks and how they fit together locally, the global topology of $\CS_2$ is not completely understood. One can try to pursue a further understanding 
% of this global topology. Here are some explicit remained questions from our work. 
% 
% \begin{ques}
%  Is the neighborhood of the parabolic group of type $\{Id\}$ connected even after we remove all the parabolic elements in the neighborhood? 
% \end{ques} 
% The answer to this question for the parabolic group of type $\CC$ was affirmative as we saw, which was quite surprising. 
% 
 An interesting direction in generalizing this work would be to study the case of elementary subgroups of $\PC$.
 Since those are precisely the subgroups of $\PC$ with finite index abelian groups, it seems reasonable to expect that the Chabauty space of
 elementary groups is not too much more complicated.
 
 Also, along the way of Subsection \ref{subsec:lastcase} we discovered that it was possible to relate some aspects of geometric limits to the continued fraction of some quantity (namely $\theta_n$) by reading its expansion, first backwards starting from some index $j+1$, then forwards starting at the index $j+2$. 
We would be really interested in finding other occurences of these relations in other parts of Mathematics.

%. Algebraically there is
%  nothing new here, but its explicit geometric interpretation in \ref{subsec:contfracappl} is quite interesting. What are some other possible applications of this idea in geometry? This is a rather
%  open-ended question. 

\pagebreak

\bibliographystyle{alpha}
\bibliography{biblio}

\begin{thebibliography}{Hub12}

\bibitem[Bac11]{Ismael}
Isma\"{e}l Bachy.
\newblock {\em Enrichissements de Siegel}.
\newblock PhD thesis, Universit\'{e} de Provence, 2011.

\bibitem[BC12]{BC1}
Hyungryul Baik and Lucien Clavier.
\newblock The space of geometric limits of one-generator closed subgroups of
  $\mbox{PSL}(2,\mathbb{R})$.
\newblock {\em arXiv:0906.3491v2 [math.GT]}, 2012.

\bibitem[Bro07]{Brock}
Jeff Brock.
\newblock Some examples of limits of kleinian groups.
\newblock $\mbox{http://www.math.cornell.edu/~vogtmann/MSRI/}$, 2007.

\bibitem[Cha50]{Cha1}
Claude Chabauty.
\newblock Limite d'ensembles et g\'{e}om\'{e}trie des nombres.
\newblock {\em Bull. Soc. Math. France}, 78:143--151, 1950.

\bibitem[Hat]{AHTopNum}
Allen Hatcher.
\newblock Topology of numbers.
\newblock $\mbox{http://www.math.cornell.edu/~hatcher/\#TN}$.

\bibitem[HP79]{PourHubb}
John~Hamal Hubbard and Ibrahim Pourezza.
\newblock The space of closed subgroups of $\mathbb{R}$.
\newblock {\em Topology}, 18(2):143--146, 1979.

\bibitem[Hub12]{Hubb2}
J.H. Hubbard.
\newblock {\em Teichm\"{u}ller Theory and Applications to Geometry, Topology,
  and Dynamics}, volume 2: Four Theorems by William Thurston.
\newblock Matrix Edition, 2012.
\newblock (unpublished).

\end{thebibliography}

\end{document}